\documentclass{article}
\usepackage{amsthm}

\usepackage{arxiv}

\usepackage{iftex}
\ifPDFTeX
  \usepackage[utf8]{inputenc} 
  \usepackage[T1]{fontenc}    
\else
  \usepackage{fontspec}       
  \defaultfontfeatures{Ligatures=TeX}
\fi
\usepackage{hyperref}       
\usepackage{url}            
\usepackage{booktabs}       
\usepackage{amsmath}        
\usepackage{amsfonts}       
\usepackage{amssymb}        
\usepackage{calc}           
\usepackage{subcaption}     
\usepackage{csquotes}       
\usepackage{nicefrac}       
\usepackage{cleveref}       
\newtheorem{question}{Question}
\newtheorem{theorem}{Theorem}
\newtheorem{definition}{Definition}
\newtheorem{corollary}{Corollary}
\newtheorem{proposition}{Proposition}
\newtheorem{conjecture}{Conjecture}
\newtheorem{lemma}{Lemma}

\newtheorem{observation}{Observation}
\newtheorem{remark}{Remark}

\crefname{theorem}{Theorem}{Theorems}
\crefname{lemma}{Lemma}{Lemmas}
\crefname{definition}{Definition}{Definitions}
\crefname{proposition}{Proposition}{Propositions}
\crefname{question}{Question}{Questions}
\crefname{corollary}{Corollary}{Corollaries}
\crefname{conjecture}{Conjecture}{Conjectures}
\crefname{remark}{Remark}{Remarks}
\crefname{observation}{Observation}{Observations}
\crefname{equation}{Eq.}{Eqs.}
\usepackage{microtype}      
\usepackage{graphicx}
\usepackage{enumitem}

\usepackage[
  backend=biber,
  style=ieee,
  date=year,
  natbib=true,
  labeldate=year,
  alldates=year,
  urldate=short,
  maxnames=99,
  minnames=99,
  maxbibnames=99, 
  minbibnames=99,  
  sortcites=true,
  sorting=nyt,
  url=true,
]{biblatex}

\AtEveryBibitem{\clearfield{keywords}
  \clearfield{note}
  \clearfield{annotation}
}
\AtEveryBibitem{\clearfield{month}}
\AtEveryBibitem{\clearfield{day}}

\AtEveryBibitem{
  \ifentrytype{article}{
    \clearfield{url}
    \clearfield{urlyear}
  }{}
}
\AtEveryBibitem{
  \ifentrytype{inproceedings}{
    \clearfield{url}
    \clearfield{urlyear}
  }{}
}
\AtEveryBibitem{
  \ifentrytype{misc}{
    \clearfield{url}
    \clearfield{urlyear}
  }{}
}

\usepackage{doi}

\usepackage{tikz}
\usetikzlibrary{calc}

\usepackage{listings}
\usepackage{textcomp}    
\usepackage{upquote}     

\usepackage{algorithm}
\usepackage{algpseudocode}

\usepackage[dvipsnames]{xcolor}

\definecolor{codegreen}{rgb}{0,0.6,0}
\definecolor{codegray}{rgb}{0.5,0.5,0.5}
\definecolor{codepurple}{rgb}{0.58,0,0.82}
\definecolor{backcolour}{rgb}{0.95,0.95,0.92}

\lstdefinestyle{julia}{
  backgroundcolor=\color{backcolour},
  commentstyle=\color{codegreen},
  keywordstyle=\color{magenta},
  numberstyle=\tiny\color{codegray},
  stringstyle=\color{codepurple},
  basicstyle=\ttfamily\footnotesize,
  breakatwhitespace=false,
  breaklines=true,
  captionpos=b,
  keepspaces=true,
  numbers=left,
  numbersep=5pt,
  showspaces=false,
  showstringspaces=false,
  showtabs=false,
  tabsize=2,
  language=Julia
}

\usepackage{etoolbox}
\usepackage{epigraph}

\usepackage{rotating}

\DeclareMathOperator{\Aut}{Aut}
\DeclareMathOperator{\PAut}{PAut}
\DeclareMathOperator{\ran}{ran}
\DeclareMathOperator{\dom}{dom}
\DeclareMathOperator{\fix}{fix}
\DeclareMathOperator{\supp}{supp}
\DeclareMathOperator{\dual}{dual}
\DeclareMathOperator{\rank}{rank}
\DeclareMathOperator{\degen}{degen}
\DeclareMathOperator{\ex}{ex}
\DeclareMathOperator{\dist}{dist}
\DeclareMathOperator{\id}{id}



\usepackage{pdflscape}
\usepackage{algorithm}

\title{Extremal Asymmetric Depth of Planar Graphs and Hidden Near-Mirror Symmetries of IPR Fullerenes}

\author{%
  \href{https://orcid.org/0000-0001-8237-1275}{\includegraphics[scale=0.06]{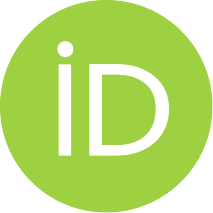}\hspace{1mm}Ján~Pastorek}\thanks{Corresponding author.} \\
  Department of Applied Informatics, Faculty of Mathematics, Physics and Informatics\\
  Comenius University Bratislava\\
  Mlynská dolina F1, 842~48 Bratislava, Slovakia\\
  \texttt{jan.pastorek@fmph.uniba.sk} \\
}

\renewcommand{\shorttitle}{Asymmetric Depth and Hidden Near-Mirror Symmetries of IPR Fullerenes}

\hypersetup{
  pdftitle={Extremal Asymmetric Depth of Planar Graphs and Hidden Near-Mirror Symmetries of IPR Fullerenes},
  pdfsubject={graph theory, planar graphs, fullerenes},
  pdfauthor={Jan~Pastorek},
  pdfkeywords={asymmetric graphs, partial automorphisms, asymmetric depth, planar graphs, fullerenes, IPR fullerenes},
}

\begin{document}
\maketitle

\begin{abstract}
   Although almost all graphs are asymmetric---having no nontrivial global
  automorphisms---they may still possess local symmetries in the form of
  isomorphisms between induced subgraphs, i.e., partial automorphisms. We study
  such local symmetries via \emph{asymmetric depth}, defined in terms of the
  maximum rank of a nontrivial partial automorphism. We prove a tight upper
  bound on asymmetric depth in the class of planar graphs and identify the
  extremal graphs: duals of IPR fullerenes attain the maximum already on $47$
  vertices. Our main structural result concerns the IPR fullerenes that are neither maximally asymmetric nor symmetric. In such a cage no purely local action realises a low asymmetric
  depth, and we show that the map which does realise it cannot be confined to a
  small part of the cage either: neither to a single face, nor behind an
  interface of at most $5-k$ edges, $k \le 3$ being the deficiency. A cage of
  asymmetric depth $2$ or $3$ is therefore not asymmetric in one place; it
  carries a broken symmetry invisible to its automorphism group. Such cages are
  rare---under $2\%$ of the asymmetric IPR fullerenes at $n = 118$. In all $727$ of them the largest partial automorphism is a
  near-mirror reflection, which we state as an explicit conjecture. We also extend the asymmetric depth bound to
  graphs of higher genus.
\end{abstract}

\keywords{partial automorphisms \and asymmetric depth \and near-symmetry \and planar graphs \and IPR fullerenes \and computer-assisted proof}

\medskip
\noindent\textbf{Math. Subj. Class. (2020):} 05C10, 05C25, 92E10, 20M18.

\section{Motivation and Background}

The structural information one extracts from the automorphism group of a graph diminishes as the number of vertex orbits grows. In the extreme case, knowing that the automorphism group of a graph is trivial yields almost no information about the structure of the graph. Yet almost all graphs are known to be \textit{asymmetric}~\cite{erdosAsymmetricGraphs1963}; that is, they have no nontrivial ``global'' automorphisms. At the same time, all graphs contain at least some local symmetries. \citet{kimAsymmetryRandomRegular2002} showed that the same is true for the class of regular graphs, which contains some of the most symmetric graphs, namely, the vertex-transitive graphs. Taking the opposite point of view, deleting a single vertex from a (vertex-transitive) graphical regular representation leads to an asymmetric graph. However, it is important to note that the distorted graph obtained this way still has many isomorphic induced subgraphs with nontrivial automorphisms, suggesting that the line between symmetric and asymmetric objects is surprisingly thin.

Perhaps for this reason, researchers have studied more specific combinatorial and extremal families of asymmetric graphs and their properties \cite{brewerAsymmetricIndexGraph2020,schweitzerMinimalAsymmetricGraphs2017,spencerMaximalAsymmetryGraphs1976,quintasExtremaConcerningAsymmetric1967,feigeRobustlyAsymmetricGraphs2014,lepovicStronglyAsymmetricGraphs1995,nesetrilStructureAsymmetricGraphs}. In particular, in \cite{jajcay_inverse_2021,cingelPartialAutomorphismsLevel2024}, the authors suggested an approach to studying graphs on a more granular level using partial automorphisms of graphs, that is, isomorphisms between induced subgraphs. This approach can be viewed as a generalization of group theory: the set of all partial automorphisms of a finite graph, equipped with the usual composition of partial maps, forms the partial automorphism inverse monoid of the graph. In \cite{cingelPartialAutomorphismsLevel2024}, a measure quantifying ``how far is any graph from having a symmetry?'' was defined in terms of the rank of the largest nontrivial partial automorphism of the graph. This measure is motivated by the algebraic structure of partial automorphism inverse monoids of graphs.

Fullerene graphs are mathematical models of fullerene molecules, that is, carbon cages distinct from graphite and diamond. From a graph-theoretical perspective, the study of fullerenes has been driven by the search for invariants that correlate with the stability of a chemical compound \cite{andovaMathematicalAspectsFullerenes2016}. The fullerene graphs in which no two pentagons are adjacent, that is, each pentagon is surrounded by five hexagons, satisfy the \textit{isolated pentagon rule} (IPR). IPR fullerenes have been shown to be stable fullerene compounds \cite{krotoStabilityFullerenesCn1987,austinStructuralMotifsStability1995,schmalzElementalCarbonCages1988}. Fullerenes that violate this rule have been deemed too reactive to be synthesized \cite{tanStabilizationFusedpentagonFullerene2009}. \citet{raghavachariGroundStateC841992} suggested that steric strain is reduced when the pentagons are distributed as evenly as possible, and on this basis formulated the uniform curvature rule as an extension of the IPR principle. More recently, \citet{rodriguez-forteaMaximumPentagonSeparation2010} introduced the maximum pentagon separation rule, arguing that the most favorable carbon cages are those in which the $12$ pentagons are separated as much as possible. In \cite{goedgebeurFullerenesDistantPentagons2015}, the authors investigated the smallest fullerenes for which the minimum distance between two pentagons is fixed. These and other results show that local neighbourhoods and patches are of paramount importance in the study of fullerenes.

On the one hand, fullerenes are known to have only $28$ possible symmetry groups \cite{fowlerPossibleSymmetriesFullerene1993}. On the other hand, as shown in Table~\ref{table:asym_ipr}, most IPR fullerenes are asymmetric. The counts in Table~\ref{table:asym_ipr} report, for each tested size $n$, the number of IPR fullerenes generated by \texttt{buckygen} and the number among them that are asymmetric. Over the computed range the proportion of asymmetric IPR fullerenes trends upward, though not monotonically---it dips at $n = 88, 96, 104, 108, 112$ and $116$---and exceeds $94\%$ at $n = 118$; this is consistent with a tendency toward $1$, for which we are not aware of a proof.

While the global automorphism group of a large, complex fullerene often reduces to the trivial group, its partial automorphism inverse monoid provides a tool for studying local symmetries via isomorphisms between induced subgraphs, offering much more granularity.

In this paper, we take steps toward bringing the partial-automorphisms point of view to planar and fullerene graphs by quantifying the absence of large induced-subgraph isomorphisms (partial automorphisms). We call this measure asymmetric depth and define it in the next section.

A word on why this point of view might interest a chemist. The vertex orbits of $\Aut(F)$ are exactly the sets of carbon sites of the cage that are equivalent by symmetry; by Mani's theorem~\cite{mani1971automorphismen} the automorphism group of a $3$-connected planar graph is realised by the isometry group of a convex polyhedron with that graph as its skeleton, so for a fullerene these orbits are also the symmetry-equivalent sites of an idealised molecular geometry. Counting them is how $^{13}$C NMR spectra are read: $\mathrm{C}_{60}$ has a single line and $\mathrm{C}_{70}$, whose $70$ carbons fall into five orbits, has five. For an asymmetric cage there are $n$ orbits and this reading yields nothing. A partial automorphism of rank $n-k$ instead matches all but $k$ of the sites to one another, so asymmetric depth measures how far the cage is from having a symmetry, counted in atoms rather than in the continuous geometric distance of the symmetry measures of \citet{zabrodskyContinuousSymmetryMeasures1992,pinskyContinuousSymmetry1998}; unlike those, it is insensitive to conformation and is computed exactly. Read this way, the main theorem of \Cref{sec:hidden} says that when a cage is only two or three atoms away from a symmetry, there is never a small defective patch to point at: the near-symmetry property belongs to the whole cage. We offer asymmetric depth in this spirit, as a new perspective on near-symmetry in carbon cages rather than as a descriptor with an established chemical correlate.

\section{Preliminaries}

All graphs considered in this paper are finite, undirected and simple.

Let $F = (V, E)$ be a connected graph. The \emph{closed neighbourhood} of a vertex $v \in V$ is the set $N[v] = \{v\} \cup \{u \in V : uv \in E\}$, and the \emph{open neighbourhood} of $v$ is the set $N(v) = N[v] \setminus \{v\}$. For $S \subseteq V(F)$, we write $F[S]$ for the subgraph of $F$ \emph{induced} by~$S$, that is, the graph with vertex set $S$ whose edges are exactly the edges of $F$ with both endpoints in~$S$.

A subset of vertices $S \subseteq V(F)$ is called a \emph{vertex cut} of $F$ if the vertex-deleted subgraph $F - S$ is disconnected, or if $F - S$ is a trivial graph consisting of a single vertex. A graph is $k$-connected if the minimum number of vertices whose removal disconnects the graph is at least $k$.

A subset of edges $X \subseteq E$ is called an \emph{edge cut} if the edge-deleted subgraph $F - X$ is disconnected. An edge cut $X \subseteq E$ is called a \emph{cyclic edge cut} if at least two of the connected components of $F - X$ contain a cycle. For a vertex set $S \subseteq V(F)$, we write $\partial S$ for the \emph{edge boundary} of~$S$, i.e., the set of edges of $F$ with exactly one endpoint in~$S$.

A cycle is called \textit{separating} if the removal of its vertices increases the number of connected components of the graph.

\subsection{Planar and fullerene graphs}

A graph $F$ is \textit{planar} if it can be embedded in the plane so that edges intersect only at their endpoints.
When we refer to faces or to the dual graph, we implicitly fix such an embedding and treat $F$ as a \emph{plane} graph (i.e., a planar graph together with a fixed embedding).

Recall that the \textit{dual} graph of a plane graph $F$, $\dual(F)$, is a graph with vertices corresponding to the faces of $F$; two vertices in $\dual(F)$ are adjacent if the corresponding faces in $F$ share an edge. If $F$ is 3-connected planar, then $\dual(F)$ is well-defined up to isomorphism. We will also write $F^* := \dual(F)$.

Let $F$ be a plane graph. An edge $e$ of $F$ is called \textit{weak} if $e$ is incident with two triangular faces, and it is called \textit{semiweak} if $e$ is incident with only one triangular face. The \textit{weight} of an edge is the degree sum of its end vertices.

\textit{Fullerene} graphs are cubic, 3-connected, planar graphs with only pentagonal and hexagonal faces. Fullerene graphs are cyclically $5$-edge-connected~\cite{cyclic_edge_cuts}; equivalently, no nontrivial cyclic edge cut has fewer than $5$ edges. A fullerene is called \textit{IPR} if it has no adjacent pentagons. Euler's formula implies that a fullerene on $n$ vertices has exactly $12$ pentagonal and $n/2-10$ hexagonal faces. Its dual is obtained by exchanging vertices and faces: each face of the fullerene becomes a vertex of the dual, and two dual vertices are adjacent exactly when the corresponding faces share an edge. Consequently, the dual of a fullerene on $n$ vertices is a triangulation (that is, a planar graph in which every face is a triangle) with $12$ vertices of degree $5$ and $n/2-10$ vertices of degree $6$. The face distance between two pentagons is the graph distance between the corresponding degree-$5$ vertices in the dual graph.

The cyclic edge-connectivity of a fullerene graph cannot exceed 5, since it contains 12 pentagons, thus, there are at least 12 cyclic 5-edge-cuts formed by the edges pointing outwards of each pentagonal face. There are also cyclic 6-edge-cuts formed by the edges pointing outwards of each hexagonal face. These cyclic 5- and 6-edge-cuts will be called trivial.

\subsection{Partial automorphisms and asymmetric graphs}

We call a graph \emph{symmetric} if it possesses at least one nontrivial automorphism, i.e., if $\Aut(F)\neq 1$.
We study local structure and local symmetries using the following tools.

A \emph{partial permutation} of $X$ is a bijective mapping \(\varphi\colon A \to B\) from a subset \(A \subseteq X\) onto a subset \(B \subseteq X\). We call \(A\) the \emph{domain} of \(\varphi\) (written \(\dom(\varphi)\)) and \(B\) the \emph{range} of \(\varphi\) (written \(\ran(\varphi)\)). The rank of a partial permutation is the size of its domain or equivalently the size of its range. We denote any restriction of a partial permutation $\varphi$ with domain $X$ to a subset $S \subseteq X$ by $\varphi_{|S}$. The set of all partial permutations on a set $X$, closed under composition and inverse of partial maps, forms a \emph{symmetric inverse monoid}.

A \textit{partial automorphism} $\varphi$ of a graph $F$ is an isomorphism between induced subgraphs of $F$. Every partial automorphism of a graph is a partial permutation on the set of its vertices. Some of the basic results on structure of partial automorphisms can be found in \cite{jajcayovaComputationalAspects2022,Gal2023,cingelPartialSymmetriesSymmetry2023,cingelPartialAutomorphismsLevel2024}. Let us denote by $\fix(\varphi) := \{v \in \dom(\varphi) : \varphi(v) = v\}$ the set of vertices fixed by a partial automorphism $\varphi$, and by
\[
  \supp(\varphi) \;:=\; \{v \in \dom(\varphi) : \varphi(v) \ne v\} \;\cup\; \{v \in \ran(\varphi) : \varphi^{-1}(v) \ne v\}
\]
the \emph{support} of $\varphi$, i.e., the set of vertices moved by $\varphi$ or by its inverse. With this symmetric convention $\supp(\varphi) = \supp(\varphi^{-1})$, and the image of every non-fixed domain vertex again belongs to $\supp(\varphi)$; both properties are used repeatedly in \Cref{sec:hidden}. Note that every $x \in \supp(\varphi)$ either satisfies $x \in \dom(\varphi)$ and $\varphi(x) \neq x$, or lies outside $\dom(\varphi)$ (and is then the image of a non-fixed domain vertex).

The set of all partial automorphisms of a graph $F$ together with the operations of partial composition and partial
inverse of partial maps, forms an inverse monoid, called \textit{partial automorphism inverse monoid} and denoted as \(\PAut(F)\). Any partial automorphism inverse monoid of a graph is an inverse submonoid of symmetric inverse monoid on the set of vertices. Partial automorphism inverse monoids were fully characterized for graphs by \citet{jajcay_inverse_2021}.

The partial automorphism monoid of a graph $F$, $\PAut(F)$, is a complex algebraic structure that contains the automorphism group of $F$, $\Aut(F)$, the automorphism group of any of its induced subgraphs, and all isomorphisms between induced subgraphs. Computing the entire partial automorphism monoid is a computationally intensive task~\cite{jajcayovaComputationalAspects2022}.

However, we want to study asymmetric graphs. One can observe that if there was a guarantee that the graph on input is asymmetric and has no non-trivial partial automorphisms of rank higher than $k$, then we know quite a lot about the structure of the monoid above that rank. All induced subgraphs of order $n$ down to $k$ would be pairwise non-isomorphic and asymmetric.

We call a partial automorphism $\varphi$ \emph{trivial} if it is a restriction of the identity map, i.e., $\varphi(x) = x$ for every $x \in \dom(\varphi)$ (equivalently $\supp(\varphi) = \emptyset$), and \emph{nontrivial} otherwise; thus $\varphi$ is nontrivial precisely when it moves at least one vertex. In particular, a nontrivial partial automorphism of rank $n$ is exactly a nontrivial automorphism of $F$. Note that every automorphism of $F$ is a partial automorphism, namely one of rank $n$ whose domain and range are all of $V(F)$. For $n \ge 2$ every graph admits some nontrivial partial automorphism (for instance the local transposition of any two vertices, see \Cref{lem:swap}), so the quantity in the next definition is well defined.

Thus, in \cite{cingelPartialSymmetriesSymmetry2023} and \cite{cingelPartialAutomorphismsLevel2024} the following concepts were introduced for the study of partial automorphism inverse monoids of asymmetric graphs.

\begin{definition}[\em Asymmetric depth]
Let $F$ be a graph of order $n \geq 2$, and let $k$ be the largest positive integer for which $F$ admits a nontrivial partial automorphism $\varphi$ of rank $k$. The \textit{asymmetric depth} of $F$ is $d(F) := n - k$.
\end{definition}

Note that $d(F) = 0$ if and only if $F$ is symmetric: a nontrivial automorphism is a nontrivial partial automorphism of rank $n$, giving $k = n$, and conversely $k = n$ means $F$ has a nontrivial automorphism. For instance, if a graph $F$ has $d(F) = 3$, then all induced subgraphs of orders $n$, $n-1$, and $n-2$ are asymmetric and pairwise non-isomorphic. Any asymmetric graph $F$ on $6$ vertices has $d(F) = 1$, as there are no smaller asymmetric graphs. By the result of \citet{schweitzerMinimalAsymmetricGraphs2017}, there are precisely $18$ minimally asymmetric graphs, i.e., asymmetric graphs whose every induced subgraph has a nontrivial automorphism. All of these graphs have asymmetric depth equal to $1$ and almost all of them are planar. One of the smallest asymmetric $3$-regular planar graphs, Frucht's graph $F$, on $12$ vertices has $d(F) = 2$. In other words, Frucht's graph has no two isomorphic induced subgraphs of order $11$, and each of its induced subgraphs of order $11$ is asymmetric.

\paragraph{Relation to prior work.}
In \cite{cingelPartialSymmetriesSymmetry2023}, the authors posed an analogue of the following question we address in this paper for the class of planar graphs: ``What is the maximal asymmetric depth of a graph $F$ of order $n$?''

A tight upper bound was proved in \cite{cingelPartialAutomorphismsLevel2024}: for every graph $F$ of order $n$, $d(F) \leq \left\lfloor\frac{n-1}{2}\right\rfloor$. The proof relies on the existence of the following partial automorphisms:

\begin{lemma}[Local transposition, {\cite{cingelPartialAutomorphismsLevel2024}}]\label{lem:swap}
Let $F$ be a graph of order $n\ge 2$ and let $u,v\in V(F)$ be distinct.
Write
\[
  \Delta_{uv}:= (N(u) \setminus \{  v\} ) \; \Delta \; (N(v) \setminus \{  u\})
\]
for the symmetric difference of the open neighbourhoods of $u$ and $v$, each with the other vertex excluded.
Then $F$ admits a (nontrivial) partial automorphism of rank $n-|\Delta_{uv}|$, namely the map that swaps $u$ and $v$ and fixes every vertex of $V(F)\setminus(\Delta_{uv}\cup\{u,v\})$.
\end{lemma}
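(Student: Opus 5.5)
Define the map $\varphi$ with domain $D := V(F)\setminus \Delta_{uv}$, setting $\varphi(u)=v$, $\varphi(v)=u$, and $\varphi(x)=x$ for every $x \in D\setminus\{u,v\}$. First we check that this is well defined and has the claimed rank. Since $u\notin N(u)$ and $v \notin N(v)$, and the vertex $v$ is explicitly removed from $N(u)$ and $u$ from $N(v)$, neither $u$ nor $v$ lies in $\Delta_{uv}$. Hence $u,v\in D$ and $|D| = n-|\Delta_{uv}|$. The map $\varphi$ is a bijection of $D$ onto itself, so $\dom(\varphi)=\ran(\varphi)=D$. It is nontrivial because it moves $u \neq v$.

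It remains to show that $\varphi$ is an isomorphism of $F[D]$ onto itself, i.e.\ that for all $x,y\in D$ we have $xy\in E$ if and only if $\varphi(x)\varphi(y)\in E$. We split by how many of $x,y$ lie in $\{u,v\}$.

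\begin{itemize}
\item \textbf{Neither in $\{u,v\}$.} Both endpoints are fixed, so there is nothing to check.
\item \textbf{Both in $\{u,v\}$.} The pair $\{u,v\}$ is mapped to itself, so the edge status of $uv$ is preserved.
\item \textbf{Exactly one in $\{u,v\}$.} Say $x=u$ and $y=w \in D\setminus\{u,v\}$; the case $x=v$ is symmetric. We must show $uw\in E \iff vw\in E$. Since $w\neq u,v$, we have $w \in N(u)\setminus\{v\}$ if and only if $uw \in E$, and $w \in N(v)\setminus\{u\}$ if and only if $vw\in E$. Because $w\notin\Delta_{uv}$, $w$ lies either in both of these sets or in neither. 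This gives exactly the required equivalence.
\end{itemize}

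Thus $\varphi$ is a nontrivial partial automorphism of rank $n-|\Delta_{uv}|$. No step is a genuine obstacle. The only points that need care are that $u$ and $v$ themselves are never in $\Delta_{uv}$, which is why the other vertex is excluded, and that the possible edge $uv$ is handled by the swap.
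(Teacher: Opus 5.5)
Your proof is correct and complete. The paper does not prove this lemma itself (it quotes it from \cite{cingelPartialAutomorphismsLevel2024}), and your argument is the standard direct verification behind it: you note that $u,v\notin\Delta_{uv}$, so the domain $V(F)\setminus\Delta_{uv}$ has size $n-|\Delta_{uv}|$, and then check adjacency preservation in three cases, where the only nontrivial case, $\{u,w\}\mapsto\{v,w\}$, is exactly the condition $w\notin\Delta_{uv}$.
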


\begin{corollary}[{\cite{cingelPartialAutomorphismsLevel2024}}]\label{cor:basic-ineq}
For every graph $F$,
\begin{equation}\label{basic-ineq}
  d(F) \leq \min_{u\neq v} \{|\Delta_{uv}|\}.
\end{equation}
where the minimum is taken over all unordered pairs of distinct vertices $u,v$ of $F$.
\end{corollary}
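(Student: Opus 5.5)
The statement is Corollary~\ref{cor:basic-ineq}, and it follows directly from \Cref{lem:swap} and the definition of asymmetric depth. Since $F$ has order $n\ge 2$, a pair of distinct vertices exists. Let $u,v$ be a pair that attains the minimum of $|\Delta_{uv}|$ over all unordered pairs of distinct vertices; the set $\Delta_{uv}$ is symmetric in $u$ and $v$, so ordering does not matter.

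Apply \Cref{lem:swap} to this pair. It gives the map $\varphi$ that swaps $u$ and $v$ and fixes every vertex of $V(F)\setminus(\Delta_{uv}\cup\{u,v\})$. The lemma states that $\varphi$ is a partial automorphism of rank $n-|\Delta_{uv}|$. The rank is correct because $u,v\notin\Delta_{uv}$, which holds since each of them was excluded from the other's neighbourhood. Edge preservation is the content of the lemma, so it can be quoted rather than re-derived. For the record, the only edges that need checking are those between $\{u,v\}$ and a fixed vertex $w$. Since $w\notin\Delta_{uv}$, the vertex $w$ is adjacent to both $u$ and $v$ or to neither, so swapping them preserves adjacency. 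The edge $uv$ itself maps to $vu$.

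The map $\varphi$ is nontrivial because it moves $u$ to $v\neq u$. Let $k$ be as in the definition of asymmetric depth, i.e.\ the largest rank of a nontrivial partial automorphism. Then $k\ge n-|\Delta_{uv}|$, and therefore $d(F)=n-k\le|\Delta_{uv}|=\min_{u\neq v}|\Delta_{uv}|$.

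The argument has no real obstacle. The only point needing care is that the rank count excludes $u$ and $v$ from $\Delta_{uv}$, and this is built into the definition of $\Delta_{uv}$.
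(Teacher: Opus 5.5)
Your proof is correct and is exactly the intended argument. The paper cites this corollary from prior work as an immediate consequence of the local transposition (\Cref{lem:swap}), which gives a nontrivial partial automorphism of rank $n-|\Delta_{uv}|$ for every pair, so $d(F)\le|\Delta_{uv}|$ and one then minimises over pairs. The only small addition is that $u\notin\Delta_{uv}$ also uses $u\notin N(u)$, which holds because $F$ is simple.
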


For any cubic graph (and hence for any fullerene graph), every two adjacent vertices $u$ and $v$ satisfy $|\Delta_{uv}|\le 4$. Consequently, by \Cref{cor:basic-ineq}, every fullerene graph $F$ satisfies $d(F)\le 4$.

The notion of asymmetric depth used here was introduced in \cite{cingelPartialSymmetriesSymmetry2023,cingelPartialAutomorphismsLevel2024} as a way to quantify asymmetricity of graphs.

Asymmetric depth sits alongside several established measures of how symmetry can be created or destroyed by small modifications of a graph. The distinguishing number~\cite{albertsonCollinsSymmetryBreaking1996} and the determining number~\cite{boutinDeterminingSets2006} measure how much colouring, respectively how many fixed vertices, are needed to \emph{destroy} all symmetry of a symmetric graph; asymmetric depth measures, dually, how much of an asymmetric graph must be disregarded before a symmetry \emph{appears}. Closest to the present work is \citet{aksionovDeeplyAsymmetricPlanar2005}, whose structural theorem we use in \Cref{sec:bridge}: those authors studied precisely this second question for planar graphs, but with \emph{edge} modifications in place of vertex modifications, proving that deleting at most five edges reduces every planar graph of order at least two to a graph with a nontrivial automorphism, and that five cannot be lowered to four. \Cref{t:planar} below is the vertex analogue, and it is striking that the extremal constant is again $5$; whether the two quantities are related beyond their shared reliance on the same discharging argument we do not know. 

In \cite{cingelPartialAutomorphismsLevel2024}, the authors established a tight general upper bound on the maximal depth of graphs. Moreover, they proved the following bound that for any simple planar graph $F$ of order $n \geq 2$,

\begin{equation}\label{c:planar}
  d(F) \;\le\;\frac{12n-24}{\,n\,}\;-\;\frac{8(3n-6)^2}{n^2(n-1)} \;=\; 12 - O(1/n)
\end{equation}
without providing examples attaining this bound. \Cref{t:planar} below replaces $12 - O(1/n)$ by the constant $5$, and \Cref{thm:smallest-depth5} exhibits graphs attaining it.

\paragraph{Contribution.}

Our main theorem is a rigidity statement for the partial symmetries of IPR fullerenes. Every pair of vertices of an IPR fullerene satisfies $|\Delta_{uv}| \ge 4$, so a local transposition never realises asymmetric depth below~$4$ (\Cref{cor:basic-ineq}); an IPR fullerene of asymmetric depth $2$ or $3$ must therefore carry a large nontrivial partial automorphism that is not a transposition. We prove that no such map can be \emph{localised}: after deleting at most three vertices of an IPR fullerene, a nontrivial partial automorphism can never confine its nontrivial action to a single face, nor to any region joined to the pointwise-fixed remainder of the cage across a small edge interface (\Cref{thm:unified-localisation}). In particular, these rare cages carry a hidden, almost-global symmetry that the automorphism group cannot see (\Cref{cor:hidden-symmetry-primal}). Far from being sporadic, this behaviour is universal in the low-depth regime: inspecting our computational data, we find that \emph{every} IPR fullerene of asymmetric depth $2$ or $3$ up to $n = 118$ vertices exhibits such a hidden symmetry, and in each case it is a near-mirror one. \Cref{fig:asym_depth_2} shows the smallest example---the depth-$2$ IPR fullerene on $92$ vertices---and portrays the almost-global behaviour: the realising partial automorphism is a near-mirror reflection that fixes an axis of vertices, omits only two from its domain and range, and moves support across many faces of the cage. The theorem assumes neither the asymmetry of the cage nor the extremality of the map, and we show it is sharp in a precise sense: it fails for cubic $3$-connected graphs of girth~$5$ once planarity is dropped (\Cref{prop:girth5-witness}). We stress at the outset that the low-depth regime governed by this theorem is exceptional: over our census (\Cref{table:asym_ipr}) the IPR fullerenes of asymmetric depth $2$ or $3$ form a small and, across the computed range, decreasing fraction of all asymmetric IPR fullerenes. Most asymmetric fullerenes have asymmetric depth~$4$ realised by a single transposition. The rigidity below is therefore a statement about the rare cages that escape this generic behaviour.

\begin{itemize}[leftmargin=2em]
\item \textbf{Main theorem.} In an IPR fullerene, no nontrivial partial automorphism of deficiency at most~$3$ acts nontrivially only inside a single face or behind an interface of at most a few edges (\Cref{thm:unified-localisation}); in particular the support of every partial automorphism realising asymmetric depth $2$ or $3$ is neither contained in a single face nor separated from the pointwise-fixed part of the cage by at most $5-k$ edges (\Cref{cor:hidden-symmetry-primal}).
\item We establish a tight upper bound of $5$ on the asymmetric depth of planar graphs, and give explicit duals of IPR fullerenes attaining it.
\item We determine the possible asymmetric depths of fullerene graphs, $d(F) \in \{0,2,3,4\}$, and provide exhaustive computational data for IPR fullerenes and their duals.
\item We extend the upper bound on asymmetric depth to graphs of higher genus.
\end{itemize}

\section{Planar graphs: upper bound}\label{sec:bridge}

Here we improve the unpublished bound in \eqref{c:planar}, using a theorem proved via a discharging argument in \citet{aksionovDeeplyAsymmetricPlanar2005}. We show that any planar graph has asymmetric depth at most $5$. The improvement is short: the discharging is entirely Aksionov's, and once his theorem is in hand the bound follows from a two-line neighbourhood count (\Cref{cor:aksionov}); the new content of this and the next section is the identification of the extremal graphs, namely the duals of IPR fullerenes.

\begin{theorem}[\cite{aksionovDeeplyAsymmetricPlanar2005}]
Every connected planar graph with at least two vertices contains two vertices $u,v$ that satisfy at least one of the following conditions:
(a) $\deg(u)+\deg(v)\le 5$;
(b) $\dist(u,v)\in\{1,2\}$ and $\deg(u)+\deg(v)\le 7$;
(c) a weak edge of weight at most $11$; or
(d) a semiweak edge of weight at most $9$.
\end{theorem}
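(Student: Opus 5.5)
Since the statement is quoted from \citet{aksionovDeeplyAsymmetricPlanar2005} and used here as a black box, I only sketch how I would reprove it. The approach is a standard discharging argument on a counterexample. Suppose $G$ is a connected plane graph with at least two vertices and none of (a)--(d) holds. We may assume $G$ is not a single edge, since then (a) or (b) holds trivially. Fix an embedding and assign charge $\mu(v)=\deg(v)-6$ to each vertex and $\mu(f)=2\deg(f)-6$ to each face. By Euler's formula the total charge is $-12<0$. I would then redistribute charge so that every vertex and every face ends nonnegative, which is a contradiction. A preliminary step handles bridges and cut vertices: faces whose boundary walk repeats vertices get $\deg(f)$ counted with multiplicity, and their larger charge only helps.

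First I would extract the local structure that the failure of (a)--(d) forces.
\begin{enumerate}
\item By (a), any two vertices have degree sum at least $6$. In particular at most one vertex has degree $\le 2$, and it is the only one of degree $\le 2$.
\item By (b), any two vertices at distance $1$ or $2$ have degree sum at least $8$. So every vertex of degree $d\le 3$ sees only vertices of degree $\ge 8-d\ge 5$ within distance two. Also, two vertices of degree $4$ are never within distance two.
\item By (c), a weak edge with an endpoint of degree $5$ has its other endpoint of degree $\ge 7$. Similarly, a weak edge at a degree-$4$ vertex has its other end of degree $\ge 8$.
\item By (d), a semiweak edge at a degree-$4$ vertex has its other end of degree $\ge 6$; note that (b) already gives $\ge 4$.
\end{enumerate}
The only negatively charged elements are vertices of degree $\le 5$; triangular faces have charge $0$ and larger faces have positive charge.

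The discharging rules I would try are as follows.
\begin{enumerate}
\item[(R1)] Every face of size $\ge 4$ sends its charge $2\deg(f)-6$ equally, or weighted toward low-degree vertices, to its incident vertices of degree $\le 5$. This gives at least $1/2$ per incidence for a quadrilateral.
\item[(R2)] Every vertex $w$ of degree $\ge 7$ sends charge to each neighbour $u$ of degree $\le 5$ joined by a weak edge. A natural choice is $(\deg(w)-6)/\deg(w)$ per such edge, raised to the full excess when $u$ has degree $\le 3$, since (b) then forbids other small neighbours of $w$ near $u$.
\end{enumerate}
A degree-$5$ vertex $v$ has charge $-1$. Around $v$, each of the five edges is either weak, in which case the neighbour has degree $\ge 7$ and gives at least $1/7$, or it borders a face of size $\ge 4$, which gives a substantial amount. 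The check is that every combination yields total at least $1$. Degree-$4$ vertices (charge $-2$) receive more, because (c) and (d) force neighbours of degree $\ge 8$ or $\ge 6$. Vertices of degree $\le 3$ see only high-degree vertices and can be charged directly from them.

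The main obstacle is balancing (R2): a high-degree vertex $w$ must not give away more than $\deg(w)-6$. The dangerous configuration is a degree-$7$ or degree-$8$ vertex surrounded by many degree-$5$ neighbours along weak edges. Here I would use (b): two neighbours of $w$ are at distance $\le 2$ from each other, so their degrees sum to at least $8$. This means $w$ has at most one neighbour of degree $\le 3$, and no neighbour of degree $\le 3$ coexists with one of degree $4$. What it does not prevent is degree-$5$ vertices being consecutive around $w$. I expect the delicate case analysis to be exactly the degree-$5$ vertex with all five incident edges weak and all neighbours of degree $7$. Such a vertex receives $5\cdot\tfrac17<1$, so the rule must be tuned, for instance by letting a degree-$7$ vertex route its charge through triangles and give $1/5$ to each degree-$5$ neighbour. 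One then verifies that a degree-$7$ vertex cannot have six or more degree-$5$ neighbours in a fully triangulated neighbourhood without creating a weak edge of weight $\le 11$ between two degree-$5$ vertices, or violating (d). This final fine-tuning of constants is where Aksionov's argument does its real work, and I would follow his case split there.
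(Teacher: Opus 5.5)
There is a genuine gap. The paper gives no proof of this statement: it quotes it from \citet{aksionovDeeplyAsymmetricPlanar2005} and uses it as a black box, with the neighbourhood count in \Cref{cor:aksionov} as its only new argument. Your proposal therefore has to stand on its own, and it does not. The global frame is fine: the charges $\deg(v)-6$ and $2\deg(f)-6$ do sum to $-12$, and discharging is the right tool. But a discharging proof consists of its rules together with the check that every vertex and face ends nonnegative. You never fix the rules, and you explicitly hand the verification back to Aksionov's case split. That deferred step is the whole theorem. Moreover, the partial rules you do state fail in configurations that violate none of (a)--(d). So the difficulty is not confined to the single degree-$5$/degree-$7$ case you flag.

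Concretely, four of your claims are wrong.

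(i) Two degree-$4$ vertices \emph{can} lie within distance two, since $4+4=8$ is allowed once (b) fails. An edge joining them is excluded only if it lies on a triangular face.

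(ii) Your justification for sending the full excess of $w$ to a degree-$3$ neighbour does not hold. Failure of (b) only forces the other neighbours of $w$ to have degree $\ge 5$. So $w$ may still owe charge to degree-$5$ neighbours on weak edges, and those would receive nothing.

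(iii) With rate $(\deg(w)-6)/\deg(w)$, a degree-$4$ vertex whose faces are all triangles and whose four neighbours have degree $8$ ends at $-2+4\cdot\frac14=-1$. Vertices of degree $\le 2$ lie on no weak edge, so (R2) gives them nothing. For example, take a degree-$2$ vertex on one triangle and one quadrilateral; both its edges are semiweak and lead to vertices of degree $\ge 8$. It collects at most $2$ of the $4$ it needs.

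(iv) Your exclusion for degree-$7$ vertices is false. Suppose all faces at $w$ are triangles. The edge between two consecutive degree-$5$ neighbours may have a non-triangular face on its other side, which makes it a semiweak edge of weight $10$, and (d) permits that. So $w$ can have seven degree-$5$ neighbours, and paying $\frac15$ to each overspends its charge of $1$.

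A working scheme has to make vertex-to-vertex transfers depend on the recipient's faces. For instance, $w$ could pay only degree-$5$ neighbours all of whose faces are triangles, which forces their neighbours on the link of $w$ to have degree $\ge 7$. The scheme must also let high-degree vertices pay across semiweak edges. Until those rules are written down and every degree $1,\dots,5$ is checked, what you have is an outline, not a proof.
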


\begin{figure}[!ht]
    \centering
\begin{tikzpicture}[
    vertex/.style={circle, draw=black, thick, inner sep=1pt, minimum size=4.5mm, font=\sffamily\scriptsize},
    uv/.style={vertex, fill=red!40},
    shared/.style={vertex, fill=gray!30},
    delta/.style={vertex, fill=blue!30},
    edge/.style={thick, draw=black!80},
    highlight/.style={thick, draw=red!80}
]

\begin{scope}[shift={(0,0)}]
    \node[uv] (u) at (-0.6,0) {$u$};
    \node[uv] (v) at (0.6,0) {$v$};
    
    \node[delta] (x1) at (-1.3, 0.6) {};
    \node[delta] (x2) at (-1.3, -0.6) {};
    
    \node[delta] (y1) at (1.3, 0.6) {};
    \node[delta] (y2) at (1.3, 0) {};
    \node[delta] (y3) at (1.3, -0.6) {};

    \draw[edge] (u) -- (x1); \draw[edge] (u) -- (x2);
    \draw[edge] (v) -- (y1); \draw[edge] (v) -- (y2); \draw[edge] (v) -- (y3);
    
    \node[font=\footnotesize\bfseries] at (0, -1.4) {(a) $d(u)+d(v) \leq 5$};
    \node[font=\scriptsize] at (0, -1.8) {$d(u)=2, d(v)=3 \implies |\Delta_{uv}| = \mathbf{5}$};
\end{scope}

\begin{scope}[shift={(5,0)}]
    \node[uv] (u) at (-0.8,0) {$u$};
    \node[uv] (v) at (0.8,0) {$v$};
    
    \node[shared] (w) at (0, 0.8) {};
    
    \node[delta] (x1) at (-1.5, 0.5) {};
    \node[delta] (x2) at (-1.5, -0.5) {};
    
    \node[delta] (y1) at (1.5, 0.5) {};
    \node[delta] (y2) at (1.5, 0) {};
    \node[delta] (y3) at (1.5, -0.5) {};

    \draw[highlight, dashed] (u) -- (w); \draw[highlight, dashed] (v) -- (w);
    \draw[edge] (u) -- (x1); \draw[edge] (u) -- (x2);
    \draw[edge] (v) -- (y1); \draw[edge] (v) -- (y2); \draw[edge] (v) -- (y3);
    
    \node[font=\footnotesize\bfseries] at (0, -1.4) {(b) dist$(u,v)=2$, sum $\leq 7$};
    \node[font=\scriptsize] at (0, -1.8) {$d(u)=3, d(v)=4 \implies |\Delta_{uv}| = \mathbf{5}$};
\end{scope}

\begin{scope}[shift={(0,-4.2)}]
    \node[uv] (u) at (-0.8,0) {$u$};
    \node[uv] (v) at (0.8,0) {$v$};
    
    \node[shared] (w1) at (0, 0.8) {};
    \node[shared] (w2) at (0, -0.8) {};
    
    \node[delta] (x1) at (-1.6, 0.5) {};
    \node[delta] (x2) at (-1.6, -0.5) {};
    
    \node[delta] (y1) at (1.6, 0.6) {};
    \node[delta] (y2) at (1.6, 0) {};
    \node[delta] (y3) at (1.6, -0.6) {};

    \draw[highlight] (u) -- (v) node[midway, fill=white, inner sep=0.5pt, font=\tiny] {weak};
    \draw[edge] (u) -- (w1); \draw[edge] (v) -- (w1);
    \draw[edge] (u) -- (w2); \draw[edge] (v) -- (w2);
    
    \draw[edge] (u) -- (x1); \draw[edge] (u) -- (x2);
    \draw[edge] (v) -- (y1); \draw[edge] (v) -- (y2); \draw[edge] (v) -- (y3);
    
    \node[font=\footnotesize\bfseries] at (0, -1.6) {(c) Weak edge, weight $\leq 11$};
    \node[font=\scriptsize] at (0, -2.0) {$d(u)=5, d(v)=6 \implies |\Delta_{uv}| = \mathbf{5}$};
\end{scope}

\begin{scope}[shift={(5,-4.2)}]
    \node[uv] (u) at (-0.8,0) {$u$};
    \node[uv] (v) at (0.8,0) {$v$};
    
    \node[shared] (w1) at (0, 0.8) {};
    
    \node[delta] (x1) at (-1.6, 0.5) {};
    \node[delta] (x2) at (-1.6, -0.5) {};
    
    \node[delta] (y1) at (1.6, 0.6) {};
    \node[delta] (y2) at (1.6, 0) {};
    \node[delta] (y3) at (1.6, -0.6) {};

    \draw[highlight] (u) -- (v) node[midway, fill=white, inner sep=0.5pt, font=\tiny] {semiweak};
    \draw[edge] (u) -- (w1); \draw[edge] (v) -- (w1);
    
    \draw[edge] (u) -- (x1); \draw[edge] (u) -- (x2);
    \draw[edge] (v) -- (y1); \draw[edge] (v) -- (y2); \draw[edge] (v) -- (y3);
    
    \node[font=\footnotesize\bfseries] at (0, -1.6) {(d) Semiweak edge, weight $\leq 9$};
    \node[font=\scriptsize] at (0, -2.0) {$d(u)=4, d(v)=5 \implies |\Delta_{uv}| = \mathbf{5}$};
\end{scope}

\begin{scope}[shift={(2.5,-7.2)}]
    \draw[fill=black!5, draw=black!20, rounded corners] (-4,-0.4) rectangle (4, 0.4);
    \node[shared, label={[font=\scriptsize]right:{Common neighbours}}, minimum size=3.5mm] (l2) at (-2.0, 0) {};
    \node[delta, label={[font=\scriptsize]right:{ Not shared}}, minimum size=3.5mm] (l3) at (1.2, 0) {};
\end{scope}
\end{tikzpicture}
\caption{The four cases of Aksionov's theorem, each drawn in the configuration that maximises $|\Delta_{uv}|$. Grey vertices are common neighbours of $u$ and $v$, blue vertices are neighbours of exactly one of them; the latter are precisely the vertices counted by $\Delta_{uv}$. In every case $|\Delta_{uv}| \le 5$, which is \Cref{cor:aksionov}.}
\label{fig:aksionov-cases}
\end{figure}

\begin{corollary}\label{cor:aksionov}
Every connected planar graph of order $n \ge 2$ contains two vertices $u,v$ with $|\Delta_{uv}| \le 5$.
\end{corollary}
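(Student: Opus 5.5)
We apply Aksionov's theorem and bound $|\Delta_{uv}|$ separately for each of the four conditions (a)--(d). The tool is a single counting inequality. Put $N'(u)=N(u)\setminus\{v\}$ and $N'(v)=N(v)\setminus\{u\}$, and let $c=|N(u)\cap N(v)|$ be the number of common neighbours. Neither $u$ nor $v$ belongs to $N(u)\cap N(v)$, so $c=|N'(u)\cap N'(v)|$. Write $\varepsilon=1$ if $uv\in E(F)$ and $\varepsilon=0$ otherwise. Then $|N'(u)|=\deg(u)-\varepsilon$ and $|N'(v)|=\deg(v)-\varepsilon$, and hence
\[
  |\Delta_{uv}| \;=\; \deg(u)+\deg(v)-2\varepsilon-2c .
\]
Every case below is an application of this identity together with a lower bound on $\varepsilon$ and $c$ taken from the hypothesis of that case.

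\emph{Case (a).} Here $\varepsilon,c\ge 0$, so $|\Delta_{uv}|\le \deg(u)+\deg(v)\le 5$.

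\emph{Case (b).} We split on the distance. If $\dist(u,v)=1$, then $\varepsilon=1$ and $|\Delta_{uv}|\le 7-2=5$. If $\dist(u,v)=2$, then $u$ and $v$ have a common neighbour, so $c\ge 1$ and again $|\Delta_{uv}|\le 7-2=5$.

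\emph{Case (c).} The edge $uv$ is weak, so it lies on two triangular faces. Their third vertices are common neighbours of $u$ and $v$. They are distinct, since in a simple plane graph two distinct faces cannot both be the triangle on the same three vertices except in the degenerate case $F=K_3$. That case is harmless: $K_3$ is symmetric, and directly $\Delta_{uv}=\emptyset$. So $\varepsilon=1$ and $c\ge 2$, giving $|\Delta_{uv}|\le 11-2-4=5$.

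\emph{Case (d).} The edge $uv$ is semiweak, so it lies on a triangular face, whose third vertex is a common neighbour. Thus $\varepsilon=1$ and $c\ge1$, giving $|\Delta_{uv}|\le 9-2-2=5$.

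The only point that needs care is the passage from faces to common neighbours in (c) and (d). One must check that the two triangles in (c) supply two \emph{distinct} common neighbours, and that in both cases the graph comes with a fixed embedding, as the paper's convention on weak and semiweak edges already presupposes. Once this is checked the four cases are routine, and the corollary follows because Aksionov's theorem guarantees that at least one of (a)--(d) holds for some pair $u,v$.
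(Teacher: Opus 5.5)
Your proof is correct and follows the paper's argument: it applies the identity $|\Delta_{uv}| = \deg(u)+\deg(v)-2\varepsilon-2c$ to the four cases of Aksionov's theorem, with the same lower bounds on $\varepsilon$ and $c$ in each case. You also split case (b) by distance and check that the two triangles in case (c) give distinct common neighbours (with $K_3$ as the only exception); the paper leaves both of these details implicit.
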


\begin{proof}
If $u$ and $v$ have $t$ common neighbours, then $|\Delta_{uv}| = \deg(u) + \deg(v) - 2t - 2$ when $u \sim v$, and $|\Delta_{uv}| = \deg(u) + \deg(v) - 2t$ otherwise. A weak edge lies on two triangular faces, so its endpoints have $t \ge 2$; a semiweak edge gives $t \ge 1$; and $\dist(u,v) = 2$ gives $t \ge 1$. The four cases of Aksionov's theorem thus yield, respectively, $|\Delta_{uv}| \le 5$, $\le 7 - 2$, $\le 11 - 4 - 2$, and $\le 9 - 2 - 2$; in every case $|\Delta_{uv}| \le 5$. \Cref{fig:aksionov-cases} illustrates the four cases.
\end{proof}

\begin{observation}\label{t:disconnected_general}
If $F$ is disconnected with components $C_1, \dots, C_c$, then $d(F) \le \min_i d(C_i)$, the minimum taken over components of order at least~$2$: a nontrivial partial automorphism of a component of least depth, extended by the identity on the other components, is a nontrivial partial automorphism of $F$.
\end{observation}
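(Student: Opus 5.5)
The observation follows directly from the definitions, so the plan is to build the required partial automorphism explicitly and then count its rank. Fix a component $C_i$ of order at least $2$ for which $d(C_i)$ is smallest among such components; one exists whenever $F$ has an edge or any component of order at least $2$. Write $n_i = |V(C_i)|$. By the definition of asymmetric depth, which is well defined for $C_i$ by the remark following \Cref{lem:swap}, there is a nontrivial partial automorphism $\psi$ of $C_i$ of rank $n_i - d(C_i)$.

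Define $\varphi$ on the vertex set
\[
  \dom(\psi) \cup \bigl(V(F)\setminus V(C_i)\bigr)
\]
by letting $\varphi$ agree with $\psi$ on $\dom(\psi)$ and act as the identity on every other vertex of the domain. The main check is that $\varphi$ is an isomorphism between the induced subgraphs $F[\dom(\varphi)]$ and $F[\ran(\varphi)]$.

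1. Bijectivity is immediate, since $\psi$ maps $\dom(\psi)$ onto $\ran(\psi)\subseteq V(C_i)$ and the identity part acts on the disjoint set $V(F)\setminus V(C_i)$.
2. For adjacency, two vertices in the domain are either both in $C_i$, where edges are preserved because $\psi$ is a partial automorphism of $C_i$ and $F[V(C_i)]=C_i$; or both outside $C_i$, where the identity preserves edges trivially; or one inside and one outside. In the mixed case there is no edge before the map, since the vertices lie in different components, and no edge after it, since the images again lie one in $C_i$ and one outside. So both adjacency and non-adjacency are preserved.

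Finally, $\varphi$ is nontrivial because $\psi$ moves some vertex. Its rank is $(n_i - d(C_i)) + (n - n_i) = n - d(C_i)$. Hence the largest rank of a nontrivial partial automorphism of $F$ is at least $n - d(C_i)$, which gives $d(F)\le d(C_i)=\min_i d(C_i)$.

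There is no real obstacle here. The only care needed is the mixed-pair adjacency check, and the restriction to components of order at least $2$, so that $d(C_i)$ is defined.
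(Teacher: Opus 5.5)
Your proposal is correct and follows the paper's own justification: extend a depth-realising nontrivial partial automorphism of a least-depth component by the identity on the remaining components, then count its rank, $n - d(C_i)$. You also spell out the bijectivity, mixed-pair adjacency and rank checks, which the paper leaves implicit.
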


From \Cref{cor:basic-ineq}, \Cref{cor:aksionov} and \Cref{t:disconnected_general} we get the following result.

\begin{theorem}\label{t:planar}
If $F$ is a planar graph, then $d(F)\leq 5$.
\end{theorem}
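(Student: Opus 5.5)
The plan is to combine \Cref{cor:basic-ineq}, \Cref{cor:aksionov} and \Cref{t:disconnected_general}, splitting according to whether $F$ is connected. Throughout, $F$ has order $n \ge 2$, since asymmetric depth is only defined in that range.

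\emph{Connected case.} Suppose $F$ is connected. \Cref{cor:aksionov} supplies distinct vertices $u,v$ with $|\Delta_{uv}| \le 5$. By \Cref{lem:swap}, the local transposition of $u$ and $v$ is a nontrivial partial automorphism of rank $n - |\Delta_{uv}| \ge n-5$. Hence $d(F) \le |\Delta_{uv}| \le 5$, which is exactly \Cref{cor:basic-ineq} instantiated at this pair.

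\emph{Disconnected case.} Suppose $F$ is disconnected. If some component $C$ has order at least~$2$, then $C$ is connected and planar, so the connected case gives $d(C) \le 5$. \Cref{t:disconnected_general} then yields $d(F) \le d(C) \le 5$.

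The only remaining situation is that every component is a single vertex, so $F$ is edgeless. This case slips past \Cref{t:disconnected_general}, which only minimises over components of order at least~$2$, so I would treat it directly. Any two vertices $u,v$ of an edgeless graph satisfy $\Delta_{uv} = \emptyset$. By \Cref{lem:swap}, swapping $u$ and $v$ and fixing everything else is then a nontrivial automorphism, so $d(F) = 0 \le 5$. Alternatively, \Cref{cor:basic-ineq} applies to any graph without needing connectivity, and gives the same bound.

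\emph{Main obstacle.} There is no real obstacle, because the substantive work is in Aksionov's theorem, already absorbed into \Cref{cor:aksionov}. The only care needed is the bookkeeping at the boundaries: restricting to components of order at least~$2$, and handling the edgeless graph separately.
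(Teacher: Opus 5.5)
Your proposal is correct and follows the paper's own argument: the paper likewise derives \Cref{t:planar} directly from \Cref{cor:basic-ineq}, \Cref{cor:aksionov} and \Cref{t:disconnected_general}. Your separate treatment of the edgeless case is bookkeeping the paper leaves implicit; as you note, it is already covered because \Cref{cor:basic-ineq} holds for every graph.
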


\section{Tightness of the planar bound via IPR fullerenes}

Given the bounds on planar graphs established in the previous section, a natural class of candidates for extremal asymmetric depth is the class of duals of IPR fullerenes. This class has been extensively studied, efficient generators such as \texttt{buckygen} \cite{buckygen} are available, and as Table~\ref{table:asym_ipr} shows, most IPR fullerenes are asymmetric. The following results explain why (dual) IPR fullerenes are structurally good candidates.

\paragraph{Aksionov-extremality of IPR fullerene duals.} Specialising to triangulations, every face is a triangle, so every edge is weak in the sense of \citet{aksionovDeeplyAsymmetricPlanar2005}, and Aksionov's cases~(a), (b), (d) collapse: there are no semiweak edges, and every vertex has degree at least~$5$ in a fullerene dual, ruling out small-degree-sum pairs. Hence Aksionov's theorem applied to a fullerene dual reduces to the single case~(c): \emph{``there is a weak edge of weight at most~$11$''}. On the other hand, every edge of a fullerene dual has weight in $\{10, 11, 12\}$, and the IPR condition is equivalent to forbidding weight-$10$ edges. Therefore, if $F^* := \dual(F)$ is the dual of an IPR fullerene $F$, then $\min_{uv \in E(F^*)} |\Delta_{uv}| = 5$.

Any non-IPR fullerene dual admits a weight-$10$ edge witnessing $|\Delta| = 4$, hence $d \le 4$ by \Cref{cor:basic-ineq}. Every IPR fullerene dual saturates Aksionov from below, allowing $d = 5$. 

\begin{proposition}\label{thm:dual-depth-5-implies-ipr}
If $F$ is a fullerene graph with $d(\dual(F)) = 5$, then $F$ is IPR.
\end{proposition}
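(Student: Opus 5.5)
We argue by contraposition. Suppose $F$ is a fullerene that is not IPR; we show $d(\dual(F)) \le 4$, which contradicts $d(\dual(F)) = 5$. Write $F^* := \dual(F)$. Since $F$ is $3$-connected and planar, $F^*$ is well defined up to isomorphism. It is a triangulation whose vertices have degree $5$ (pentagons) or $6$ (hexagons).

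\textbf{Step 1: locate the pair.} Because $F$ is not IPR, it has two pentagonal faces sharing an edge. In $F^*$ these are adjacent vertices $u \sim v$ with $\deg(u) = \deg(v) = 5$, so the edge $uv$ has weight $10$.

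\textbf{Step 2: count common neighbours.} In a triangulation every edge lies on exactly two triangular faces. The edge $uv$ therefore has two common neighbours $w_1, w_2$, namely the apices of the two faces containing $uv$, so $t \ge 2$.

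\textbf{Step 3: bound the symmetric difference.} The formula from the proof of \Cref{cor:aksionov} for adjacent vertices gives
\[
|\Delta_{uv}| = \deg(u) + \deg(v) - 2t - 2 \le 10 - 4 - 2 = 4.
\]

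\textbf{Step 4: conclude.} By \Cref{cor:basic-ineq}, realised by the local transposition of \Cref{lem:swap}, we get $d(F^*) \le |\Delta_{uv}| \le 4 < 5$. This is the contradiction.

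The only point needing care is Step~2, namely that $w_1 \neq w_2$. If they coincided, $u, v, w$ would bound both triangular faces on the two sides of $uv$. Then $F^*$ would be $K_3$ and $\deg(u)=2$, which is impossible since every vertex of $F^*$ has degree at least $5$. Multiple edges are also excluded, because $F$ is $3$-connected, so two faces share at most one edge. With this, Step~2 is routine, and the proof reduces to the short count above, which the preceding paragraph already anticipates.
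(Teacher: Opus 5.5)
Your proof is correct and follows the paper's argument exactly. Adjacent pentagons give a degree-$5$ pair $u \sim v$ in the dual triangulation, the two triangular faces on $uv$ give two common neighbours, so $|\Delta_{uv}| \le 5+5-4-2 = 4$ and \Cref{cor:basic-ineq} yields $d(\dual(F)) \le 4$; your extra check that the two apices are distinct is a harmless detail the paper leaves implicit.
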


\begin{proof}
If $F$ is not IPR, two pentagonal faces share an edge, so $\dual(F)$ contains adjacent vertices $u \sim v$ of degree $5$ (Figure~\ref{fig:adj_pentagons_fullerene}). Since $\dual(F)$ is a triangulation, the edge $uv$ lies on two triangular faces, so $u$ and $v$ have at least two common neighbours and $|\Delta_{uv}| \le 5 + 5 - 4 - 2 = 4$. By \Cref{cor:basic-ineq}, $d(\dual(F)) \le 4$.
\end{proof}

\begin{figure}[!ht]
\centering
\includegraphics{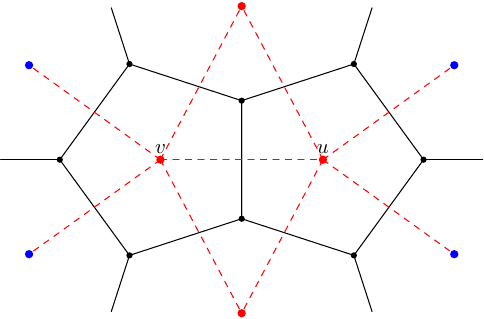}
\caption{Two adjacent pentagons in a fullerene and vertices in the dual.}
\label{fig:adj_pentagons_fullerene}
\end{figure}

It remains to establish that such IPR fullerenes exist. Using \texttt{buckygen}, we generated all nonisomorphic duals of IPR fullerenes, and using \texttt{nauty}, we computed the asymmetric depth for all duals of IPR fullerenes up to $118$ vertices.

\begin{theorem}[Computational]\label{thm:smallest-depth5}
Among the duals of IPR fullerenes (equivalently, $5/6$-regular planar
triangulations with $12$ degree-$5$ vertices, no two adjacent), the
$47$-vertex graphs $\dual(F_1)$ and $\dual(F_2)$---duals of the two
$90$-vertex IPR fullerenes $F_1, F_2$ shown in
\Cref{fig:two_tikz_side_by_side}---are the smallest examples that attain
asymmetric depth $5$.
\end{theorem}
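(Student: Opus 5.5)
This is a computational statement, so the plan is to reduce it to a finite, certifiable search and then justify each reduction. Two things must be shown. First, $d(\dual(F_1)) = d(\dual(F_2)) = 5$. Second, every dual of an IPR fullerene on fewer than $47$ vertices (IPR fullerenes with at most $88$ vertices, starting at $C_{60}$), and every other $47$-vertex dual, has depth at most $4$.

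The upper bound $d \le 5$ for all these graphs is free from \Cref{t:planar}. By the Aksionov-extremality discussion, every IPR dual has $\min |\Delta_{uv}| = 5$, so transpositions never certify anything smaller. Thus depth equals $5$ exactly when there is no nontrivial partial automorphism of rank $n-4$ or larger. Concretely, for every $4$-subset pair $(A,B)$ of deleted vertices, no isomorphism $\dual(F)-A \to \dual(F)-B$ may be nontrivial. It suffices to check rank exactly $n-4$, since a nontrivial map of larger rank restricts to a nontrivial one of rank $n-4$: delete domain vertices other than a moved one.

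\textbf{Step 1: enumeration.} I would enumerate all IPR duals with $n \le 90$ using \texttt{buckygen}, and pass to duals, which are triangulations with $n/2+2$ vertices.

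\textbf{Step 2: deciding depth.} For each graph, I would first check symmetry with \texttt{nauty}; symmetric graphs have depth $0$. For an asymmetric graph, I would iterate over $4$-subsets $A$ and compute canonical forms of $\dual(F)-A$ with \texttt{nauty}, storing them in a hash table. A collision between two different subsets $A \neq B$ yields a nontrivial map, hence depth at most $4$. A subgraph with a nontrivial automorphism also yields depth at most $4$, and \texttt{nauty} reports this directly.

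This leaves the case $A=B$ with $\Aut(\dual(F)-A)$ trivial. Then the only isomorphism is the identity, so it contributes nothing. Consequently, depth is $5$ iff all $\binom{n}{4}$ deletions are pairwise non-isomorphic and asymmetric. With $n \le 47$, that is about $1.8\times10^5$ canonical labellings per graph, which is feasible over the full census.

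\textbf{Step 3: results.} I would record that no graph below $47$ vertices passes the test, and that among the $47$-vertex duals exactly $\dual(F_1)$ and $\dual(F_2)$ pass. This matches Proposition~\ref{thm:dual-depth-5-implies-ipr}, which says only IPR duals could have passed anyway.

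The main obstacle is correctness and trustworthiness rather than cost. Specifically, one must argue that the rank reduction to exactly $n-4$ is valid, which is done by the restriction argument above. One must also argue that canonical-form collisions capture all isomorphisms between distinct vertex-deleted subgraphs; this holds because canonical labelling is a complete invariant. For independent verification, I would also re-run the two $47$-vertex witnesses with a separate VF2-style subgraph isomorphism check.
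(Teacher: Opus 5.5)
Your proposal is correct and follows essentially the paper's route: enumerate the IPR fullerenes up to $90$ vertices with \texttt{buckygen}, then decide the depth of each dual by comparing \texttt{nauty} canonical forms of vertex-deleted subgraphs, cross-checked with an independent VF2 implementation. You also spell out the reduction to rank exactly $n-4$ (via restriction of a larger nontrivial map and the planar bound $d\le 5$), which the paper leaves implicit; that reduction is sound.
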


\begin{proof}
The claim was verified by exhaustive enumeration of all IPR fullerenes on up to $118$ vertices (generated with \texttt{buckygen}~\cite{buckygen}) and computation of the asymmetric depth of each dual using \texttt{nauty}. In particular $d(\dual(F)) \le 4$ for each of the $105$ IPR fullerenes on fewer than $90$ vertices, and exactly two of the $46$ IPR fullerenes on $90$ vertices satisfy $d(\dual(F)) = 5$. See \Cref{table:asym_ipr} for the primal depth data and \Cref{sec:computational-methodology} for the methodology.
\end{proof}

\begin{figure}[h]
\centering
\begin{subfigure}[t]{0.48\linewidth}
  \centering
  \begin{tikzpicture}[main_node/.style={circle,fill=black,draw=black,minimum size=3pt,inner sep=0pt}]
    
    \fill[cyan!20] (-1.656340852130327, 1.9082194952535314) -- (-2.0594285714285725, 1.580180597360501) -- (-2.0156390977443603, 0.9779115536003697) -- (-1.589533834586466, 1.010326464459367) -- (-1.3593583959899755, 1.5639731419310028) -- cycle;

    \fill[cyan!20] (-1.559779448621553, 0.0832600138921058) -- (-1.1550075187969935, -0.1883769391062735) -- (-0.8164812030075188, 0.25570733966195913) -- (-0.9607619047619047, 0.7594350544107424) -- (-1.4025864661654142, 0.6356100949293815) -- cycle;

    \fill[cyan!20] (0.3287819548872175, 1.7798564482519106) -- (0.4000802005012529, 2.4093540171335963) -- (-0.08497243107769492, 2.7244269506830285) -- (-0.3488320802005007, 2.2641352164852977) -- (-0.11023558897243158, 1.7422551516554758) -- cycle;

    \fill[cyan!20] (-2.706165413533834, 2.5675387821254914) -- (-3.0559197994987475, 1.799953692984487) -- (-2.7555689223057644, 1.3383653623523963) -- (-2.37606015037594, 1.7688353785598518) -- (-2.2969022556390972, 2.4463070155128497) -- cycle;

    \fill[cyan!20] (-0.9270776942355896, 3.0991433202130128) -- (-1.5277794486215535, 3.2495485065987495) -- (-1.84946365914787, 2.7445241954156057) -- (-1.5339548872180453, 2.398332947441537) -- (-1.0359899749373438, 2.5772632553831905) -- cycle;

    \fill[cyan!20] (-3.169323308270677, -0.5261403102570048) -- (-2.5051829573934836, -1.1887010882148639) -- (-2.443428571428572, -0.5935633248437133) -- (-2.8667268170426063, 0.22912711275758202) -- (-3.351218045112782, 0.468997453114147) -- cycle;

    \fill[cyan!20] (-0.23823558897243124, -0.17865246584857708) -- (0.24793984962406057, 0.10530215327622061) -- (0.2642205513784459, 0.7393378096781653) -- (-0.15570927318295746, 0.806760824264876) -- (-0.433604010025062, 0.2693216022227367) -- cycle;

    \fill[cyan!20] (1.4145363408521296, 0.4547348923361887) -- (1.5307468671679203, 1.4356100949293822) -- (1.105764411027568, 1.8375549895809211) -- (0.878395989974937, 1.1704561241028024) -- (0.9828170426065164, 0.39962954387589633) -- cycle;

    \fill[cyan!20] (-0.07206015037593927, -1.7527205371613794) -- (0.6033082706766919, -1.1621208613104894) -- (0.38941353383458654, -0.7407270201435523) -- (-0.2859548872180451, -0.9961565177124339) -- (-0.6379548872180458, -1.5193331789766162) -- cycle;

    \fill[cyan!20] (-4.9714285714285715, 0.9714285714285715) -- (-2.9717092731829577, -3.0285714285714285) -- (-2.60904260651629, -2.3640657559620273) -- (-3.441042606516291, -0.707663811067377) -- (-4.289323308270676, 0.9435517480898357) -- cycle;

    \fill[cyan!20] (3.0285714285714285, 0.9714285714285715) -- (1.0282907268170414, -3.0285714285714285) -- (0.7262556390977437, -2.3361889326232905) -- (1.5273784461152884, -0.6823801805973604) -- (2.3537644110275675, 0.9727251678629307) -- cycle;

    \fill[cyan!20] (1.0282907268170414, 4.9714285714285715) -- (-2.9717092731829577, 4.9714285714285715) -- (-2.6309373433583962, 4.302384811298912) -- (-0.9708671679198004, 4.295901829127113) -- (0.6746065162907255, 4.307571197036351) -- cycle;

    \node[main_node] (0) at (3.0285714285714285, 0.9714285714285715) {};
    \node[main_node] (1) at (1.0282907268170414, -3.0285714285714285) {};
    \node[main_node] (2) at (0.7262556390977437, -2.3361889326232905) {};
    \node[main_node] (3) at (1.5273784461152884, -0.6823801805973604) {};
    \node[main_node] (4) at (2.3537644110275675, 0.9727251678629307) {};
    \node[main_node] (5) at (1.0282907268170414, 4.9714285714285715) {};
    \node[main_node] (6) at (-2.9717092731829577, 4.9714285714285715) {};
    \node[main_node] (7) at (-4.9714285714285715, 0.9714285714285715) {};
    \node[main_node] (8) at (-2.9717092731829577, -3.0285714285714285) {};
    \node[main_node] (9) at (-2.60904260651629, -2.3640657559620273) {};
    \node[main_node] (10) at (-1.5900952380952385, -2.275248900208382) {};
    \node[main_node] (11) at (-0.3022355889724313, -2.2661727251678627) {};
    \node[main_node] (12) at (-0.07206015037593927, -1.7527205371613794) {};
    \node[main_node] (13) at (0.6033082706766919, -1.1621208613104894) {};
    \node[main_node] (14) at (1.2500451127819545, -0.5047464690900663) {};
    \node[main_node] (15) at (1.4145363408521296, 0.4547348923361887) {};
    \node[main_node] (16) at (1.5307468671679203, 1.4356100949293822) {};
    \node[main_node] (17) at (1.7878696741854627, 1.9425793007640655) {};
    \node[main_node] (18) at (1.1299047619047622, 3.2534382959018298) {};
    \node[main_node] (19) at (0.6746065162907255, 4.307571197036351) {};
    \node[main_node] (20) at (-0.9708671679198004, 4.295901829127113) {};
    \node[main_node] (21) at (-2.6309373433583962, 4.302384811298912) {};
    \node[main_node] (22) at (-3.0592882205513785, 3.2495485065987495) {};
    \node[main_node] (23) at (-3.720060150375941, 1.9244269506830292) {};
    \node[main_node] (24) at (-4.289323308270676, 0.9435517480898357) {};
    \node[main_node] (25) at (-3.441042606516291, -0.707663811067377) {};
    \node[main_node] (26) at (-3.169323308270677, -0.5261403102570048) {};
    \node[main_node] (27) at (-2.5051829573934836, -1.1887010882148639) {};
    \node[main_node] (28) at (-1.8438496240601498, -1.7429960639036803) {};
    \node[main_node] (29) at (-1.338586466165415, -1.4927529520722391) {};
    \node[main_node] (30) at (-0.6379548872180458, -1.5193331789766162) {};
    \node[main_node] (31) at (-0.2859548872180451, -0.9961565177124339) {};
    \node[main_node] (32) at (0.38941353383458654, -0.7407270201435523) {};
    \node[main_node] (33) at (0.575799498746866, -0.11252604769622554) {};
    \node[main_node] (34) at (0.9828170426065164, 0.39962954387589633) {};
    \node[main_node] (35) at (0.878395989974937, 1.1704561241028024) {};
    \node[main_node] (36) at (1.105764411027568, 1.8375549895809211) {};
    \node[main_node] (37) at (0.7722907268170411, 2.5396619587867564) {};
    \node[main_node] (38) at (0.6313784461152885, 3.226209770780274) {};
    \node[main_node] (39) at (-0.11697243107769406, 3.5996295438758974) {};
    \node[main_node] (40) at (-0.9209022556390978, 3.9179439685112296) {};
    \node[main_node] (41) at (-1.7804110275689222, 3.613243806436676) {};
    \node[main_node] (42) at (-2.5607619047619057, 3.233989349386432) {};
    \node[main_node] (43) at (-2.706165413533834, 2.5675387821254914) {};
    \node[main_node] (44) at (-3.0559197994987475, 1.799953692984487) {};
    \node[main_node] (45) at (-3.4477794486215543, 1.4155128501968048) {};
    \node[main_node] (46) at (-3.351218045112782, 0.468997453114147) {};
    \node[main_node] (47) at (-2.8667268170426063, 0.22912711275758202) {};
    \node[main_node] (48) at (-2.443428571428572, -0.5935633248437133) {};
    \node[main_node] (49) at (-1.9943057644110282, -0.5955082194952546) {};
    \node[main_node] (50) at (-1.5013934837092737, -0.9786524658485778) {};
    \node[main_node] (51) at (-1.0730426065162906, -0.6694142162537631) {};
    \node[main_node] (52) at (-0.5363408521303263, -0.6616346376476026) {};
    \node[main_node] (53) at (-0.23823558897243124, -0.17865246584857708) {};
    \node[main_node] (54) at (0.24793984962406057, 0.10530215327622061) {};
    \node[main_node] (55) at (0.2642205513784459, 0.7393378096781653) {};
    \node[main_node] (56) at (0.5207819548872186, 1.2365825422551515) {};
    \node[main_node] (57) at (0.3287819548872175, 1.7798564482519106) {};
    \node[main_node] (58) at (0.4000802005012529, 2.4093540171335963) {};
    \node[main_node] (59) at (-0.08497243107769492, 2.7244269506830285) {};
    \node[main_node] (60) at (-0.362305764411027, 3.2132438064366755) {};
    \node[main_node] (61) at (-0.9270776942355896, 3.0991433202130128) {};
    \node[main_node] (62) at (-1.5277794486215535, 3.2495485065987495) {};
    \node[main_node] (63) at (-1.84946365914787, 2.7445241954156057) {};
    \node[main_node] (64) at (-2.2969022556390972, 2.4463070155128497) {};
    \node[main_node] (65) at (-2.37606015037594, 1.7688353785598518) {};
    \node[main_node] (66) at (-2.7555689223057644, 1.3383653623523963) {};
    \node[main_node] (67) at (-2.6573233082706773, 0.6738596897429967) {};
    \node[main_node] (68) at (-2.2486215538847123, 0.4988191711044223) {};
    \node[main_node] (69) at (-1.9673583959899754, -0.04250984024079685) {};
    \node[main_node] (70) at (-1.559779448621553, 0.0832600138921058) {};
    \node[main_node] (71) at (-1.1550075187969935, -0.1883769391062735) {};
    \node[main_node] (72) at (-0.8164812030075188, 0.25570733966195913) {};
    \node[main_node] (73) at (-0.433604010025062, 0.2693216022227367) {};
    \node[main_node] (74) at (-0.15570927318295746, 0.806760824264876) {};
    \node[main_node] (75) at (-0.3258145363408529, 1.2748321370687652) {};
    \node[main_node] (76) at (-0.11023558897243158, 1.7422551516554758) {};
    \node[main_node] (77) at (-0.3488320802005007, 2.2641352164852977) {};
    \node[main_node] (78) at (-0.7738145363408511, 2.205788376939106) {};
    \node[main_node] (79) at (-1.0359899749373438, 2.5772632553831905) {};
    \node[main_node] (80) at (-1.5339548872180453, 2.398332947441537) {};
    \node[main_node] (81) at (-1.656340852130327, 1.9082194952535314) {};
    \node[main_node] (82) at (-2.0594285714285725, 1.580180597360501) {};
    \node[main_node] (83) at (-2.0156390977443603, 0.9779115536003697) {};
    \node[main_node] (84) at (-1.589533834586466, 1.010326464459367) {};
    \node[main_node] (85) at (-1.4025864661654142, 0.6356100949293815) {};
    \node[main_node] (86) at (-0.9607619047619047, 0.7594350544107424) {};
    \node[main_node] (87) at (-0.736200501253133, 1.2553831905533694) {};
    \node[main_node] (88) at (-0.9478496240601513, 1.6877981014123646) {};
    \node[main_node] (89) at (-1.3593583959899755, 1.5639731419310028) {};

    \path[draw, thick]
    (0) edge node {} (1)
    (0) edge node {} (4)
    (0) edge node {} (5)
    (1) edge node {} (2)
    (1) edge node {} (8)
    (2) edge node {} (3)
    (2) edge node {} (11)
    (3) edge node {} (4)
    (3) edge node {} (14)
    (4) edge node {} (17)
    (5) edge node {} (6)
    (5) edge node {} (19)
    (6) edge node {} (7)
    (6) edge node {} (21)
    (7) edge node {} (8)
    (7) edge node {} (24)
    (8) edge node {} (9)
    (9) edge node {} (10)
    (9) edge node {} (25)
    (10) edge node {} (11)
    (10) edge node {} (28)
    (11) edge node {} (12)
    (12) edge node {} (13)
    (12) edge node {} (30)
    (13) edge node {} (14)
    (13) edge node {} (32)
    (14) edge node {} (15)
    (15) edge node {} (16)
    (15) edge node {} (34)
    (16) edge node {} (17)
    (16) edge node {} (36)
    (17) edge node {} (18)
    (18) edge node {} (19)
    (18) edge node {} (38)
    (19) edge node {} (20)
    (20) edge node {} (21)
    (20) edge node {} (40)
    (21) edge node {} (22)
    (22) edge node {} (23)
    (22) edge node {} (42)
    (23) edge node {} (24)
    (23) edge node {} (45)
    (24) edge node {} (25)
    (25) edge node {} (26)
    (26) edge node {} (27)
    (26) edge node {} (46)
    (27) edge node {} (28)
    (27) edge node {} (48)
    (28) edge node {} (29)
    (29) edge node {} (30)
    (29) edge node {} (50)
    (30) edge node {} (31)
    (31) edge node {} (32)
    (31) edge node {} (52)
    (32) edge node {} (33)
    (33) edge node {} (34)
    (33) edge node {} (54)
    (34) edge node {} (35)
    (35) edge node {} (36)
    (35) edge node {} (56)
    (36) edge node {} (37)
    (37) edge node {} (38)
    (37) edge node {} (58)
    (38) edge node {} (39)
    (39) edge node {} (40)
    (39) edge node {} (60)
    (40) edge node {} (41)
    (41) edge node {} (42)
    (41) edge node {} (62)
    (42) edge node {} (43)
    (43) edge node {} (44)
    (43) edge node {} (64)
    (44) edge node {} (45)
    (44) edge node {} (66)
    (45) edge node {} (46)
    (46) edge node {} (47)
    (47) edge node {} (48)
    (47) edge node {} (67)
    (48) edge node {} (49)
    (49) edge node {} (50)
    (49) edge node {} (69)
    (50) edge node {} (51)
    (51) edge node {} (52)
    (51) edge node {} (71)
    (52) edge node {} (53)
    (53) edge node {} (54)
    (53) edge node {} (73)
    (54) edge node {} (55)
    (55) edge node {} (56)
    (55) edge node {} (74)
    (56) edge node {} (57)
    (57) edge node {} (58)
    (57) edge node {} (76)
    (58) edge node {} (59)
    (59) edge node {} (60)
    (59) edge node {} (77)
    (60) edge node {} (61)
    (61) edge node {} (62)
    (61) edge node {} (79)
    (62) edge node {} (63)
    (63) edge node {} (64)
    (63) edge node {} (80)
    (64) edge node {} (65)
    (65) edge node {} (66)
    (65) edge node {} (82)
    (66) edge node {} (67)
    (67) edge node {} (68)
    (68) edge node {} (69)
    (68) edge node {} (83)
    (69) edge node {} (70)
    (70) edge node {} (71)
    (70) edge node {} (85)
    (71) edge node {} (72)
    (72) edge node {} (73)
    (72) edge node {} (86)
    (73) edge node {} (74)
    (74) edge node {} (75)
    (75) edge node {} (76)
    (75) edge node {} (87)
    (76) edge node {} (77)
    (77) edge node {} (78)
    (78) edge node {} (79)
    (78) edge node {} (88)
    (79) edge node {} (80)
    (80) edge node {} (81)
    (81) edge node {} (82)
    (81) edge node {} (89)
    (82) edge node {} (83)
    (83) edge node {} (84)
    (84) edge node {} (85)
    (84) edge node {} (89)
    (85) edge node {} (86)
    (86) edge node {} (87)
    (87) edge node {} (88)
    (88) edge node {} (89)
    ;

\end{tikzpicture}
\end{subfigure}\hfill
\begin{subfigure}[t]{0.48\linewidth}
  \centering
 
\begin{tikzpicture}[main_node/.style={circle,fill=black,draw=black,minimum size=3pt,inner sep=0pt}]
    
    \fill[cyan!20] (0.8039736952567509, -0.1422916690708922) -- (0.8218833076815457, 0.603487473024594) -- (0.3567930600251854, 0.5485557337887892) -- (0.09878270603050154, -0.11450267157513316) -- (0.31537708129285047, -0.6063033016744948) -- cycle;

    \fill[cyan!20] (-1.4442423394431225, 1.9250804934624313) -- (-1.8903036238981388, 2.155147542497085) -- (-2.240660416958163, 1.6814420735577529) -- (-2.0044774031061987, 1.2394031366484715) -- (-1.4934937736113065, 1.3990283083101565) -- cycle;

    \fill[cyan!20] (-3.368406324331888, 1.599367592581907) -- (-2.8439904855183986, 2.085998176635546) -- (-2.6587379319994398, 1.592905035024753) -- (-2.9156289352175735, 0.902057632165072) -- (-3.3572128165663906, 0.7786227828234447) -- cycle;

    \fill[cyan!20] (-1.7610186092066602, -0.030489423332141996) -- (-1.7694137400307817, 0.48586892548440375) -- (-1.2668252413600103, 0.6532491662146729) -- (-0.9696376101860911, 0.24675429586973419) -- (-1.2987267384916743, -0.1836520374366728) -- cycle;

    \fill[cyan!20] (-1.0351196306142434, 2.078889363322678) -- (-0.937176437666154, 2.5726287606891862) -- (-0.3584720861900106, 2.458887747683287) -- (-0.16706310340002783, 1.8979377517223872) -- (-0.6103260109136706, 1.6891971426263377) -- cycle;

    \fill[cyan!20] (-2.8551839932838945, -0.39885520408987496) -- (-2.5680705190989226, 0.0005308529421954589) -- (-2.095144816006716, -0.2928692601525622) -- (-1.9708968798097102, -0.8596355579149044) -- (-2.4180775150412757, -1.0308933331794634) -- cycle;

    \fill[cyan!20] (-1.1336224989506096, -0.5979019768501947) -- (-1.4716664334685878, -1.0166757065537269) -- (-1.1576885406464257, -1.4412657380586946) -- (-0.5442843150972432, -1.1769471339711244) -- (-0.613124387855045, -0.6424936239945538) -- cycle;

    \fill[cyan!20] (1.100041975654121, -0.3497397666555111) -- (0.8291590877291153, -1.2861643566870162) -- (1.2796977752903318, -1.3902115333571816) -- (1.9149293409822319, -0.07508107047649837) -- (1.5998320973835165, 0.3928080966613976) -- cycle;

    \fill[cyan!20] (-0.39485098642787086, 3.3197004142961006) -- (0.15866797257590726, 3.5168084197892746) -- (0.7894221351616064, 2.8627975950053663) -- (0.5487617182034423, 2.4220511696075153) -- (-0.07695536588778396, 2.746471558976607) -- cycle;

    \fill[cyan!20] (-2.497551420176297, -2.496601387141819) -- (-0.7877431089967835, -2.4759212029589275) -- (0.7961382398209054, -2.481091249004651) -- (1.1431369805512779, -3.1428571428571423) -- (-2.857422694836994, -3.1428571428571423) -- cycle;

    \fill[cyan!20] (0.8459493493773618, 4.158540385214591) -- (1.1431369805512779, 4.857142857142858) -- (3.1428571428571423, 0.8574659850207142) -- (2.475164404645305, 0.8561734735092843) -- (1.669791520917868, 2.47375163006474) -- cycle;

    \fill[cyan!20] (-2.391213096404085, 3.2350409102973923) -- (-3.020847908213236, 3.12905496636008) -- (-2.535609346578984, 4.226397239564701) -- (-1.2875332307261775, 4.171465500328898) -- (-1.5987127466069682, 3.681603637496682) -- cycle;

    \node[main_node] (0) at (0.8039736952567509, -0.1422916690708922) {};
    \node[main_node] (1) at (0.8218833076815457, 0.603487473024594) {};
    \node[main_node] (2) at (1.100041975654121, -0.3497397666555111) {};
    \node[main_node] (3) at (0.31537708129285047, -0.6063033016744948) {};
    \node[main_node] (4) at (-3.562054008674968, -0.39691643682273003) {};
    \node[main_node] (5) at (-4.216314537568211, 0.8296769875249552) {};
    \node[main_node] (6) at (-3.3297887225409255, -0.022088098507840215) {};
    \node[main_node] (7) at (-3.020847908213236, 3.12905496636008) {};
    \node[main_node] (8) at (-3.368406324331888, 1.599367592581907) {};
    \node[main_node] (9) at (-1.4442423394431225, 1.9250804934624313) {};
    \node[main_node] (10) at (-1.8903036238981388, 2.155147542497085) {};
    \node[main_node] (11) at (-1.332866937176438, 2.93000819359976) {};
    \node[main_node] (12) at (-1.8964600531691609, 2.650179451375024) {};
    \node[main_node] (13) at (-2.413040436546802, 2.7128662596794104) {};
    \node[main_node] (14) at (-2.8439904855183986, 2.085998176635546) {};
    \node[main_node] (15) at (-2.391213096404085, 3.2350409102973923) {};
    \node[main_node] (16) at (-2.535609346578984, 4.226397239564701) {};
    \node[main_node] (17) at (-1.2875332307261775, 4.171465500328898) {};
    \node[main_node] (18) at (-2.240660416958163, 1.6814420735577529) {};
    \node[main_node] (19) at (-2.9156289352175735, 0.902057632165072) {};
    \node[main_node] (20) at (-1.5987127466069682, 3.681603637496682) {};
    \node[main_node] (21) at (-2.6587379319994398, 1.592905035024753) {};
    \node[main_node] (22) at (-2.0044774031061987, 1.2394031366484715) {};
    \node[main_node] (23) at (-2.5747866237582206, 0.5569570586130892) {};
    \node[main_node] (24) at (-3.3572128165663906, 0.7786227828234447) {};
    \node[main_node] (25) at (-2.8551839932838945, -0.39885520408987496) {};
    \node[main_node] (26) at (-3.6359311599272424, 1.9192641916609923) {};
    \node[main_node] (27) at (-2.5680705190989226, 0.0005308529421954589) {};
    \node[main_node] (28) at (-2.1404785224569745, 0.7611738774191306) {};
    \node[main_node] (29) at (-1.7610186092066602, -0.030489423332141996) {};
    \node[main_node] (30) at (-1.7694137400307817, 0.48586892548440375) {};
    \node[main_node] (31) at (-2.095144816006716, -0.2928692601525622) {};
    \node[main_node] (32) at (-1.4934937736113065, 1.3990283083101565) {};
    \node[main_node] (33) at (-1.1000419756541204, 3.3882035244019235) {};
    \node[main_node] (34) at (-2.857422694836994, 4.857142857142858) {};
    \node[main_node] (35) at (-1.0351196306142434, 2.078889363322678) {};
    \node[main_node] (36) at (-0.937176437666154, 2.5726287606891862) {};
    \node[main_node] (37) at (-4.857142857142858, 0.8574659850207142) {};
    \node[main_node] (38) at (-0.03330068560234967, 4.060955766101577) {};
    \node[main_node] (39) at (-2.4180775150412757, -1.0308933331794634) {};
    \node[main_node] (40) at (-2.833916328529452, -1.6131697690789695) {};
    \node[main_node] (41) at (-0.16706310340002783, 1.8979377517223872) {};
    \node[main_node] (42) at (-1.9708968798097102, -0.8596355579149044) {};
    \node[main_node] (43) at (-0.3584720861900106, 2.458887747683287) {};
    \node[main_node] (44) at (-0.6103260109136706, 1.6891971426263377) {};
    \node[main_node] (45) at (-2.2664054848188053, -1.5892583061175023) {};
    \node[main_node] (46) at (-1.1336224989506096, -0.5979019768501947) {};
    \node[main_node] (47) at (-1.2987267384916743, -0.1836520374366728) {};
    \node[main_node] (48) at (-1.1353015251154333, 1.0894718013225149) {};
    \node[main_node] (49) at (-0.39485098642787086, 3.3197004142961006) {};
    \node[main_node] (50) at (0.15866797257590726, 3.5168084197892746) {};
    \node[main_node] (51) at (-2.497551420176297, -2.496601387141819) {};
    \node[main_node] (52) at (-2.857422694836994, -3.1428571428571423) {};
    \node[main_node] (53) at (0.8459493493773618, 4.158540385214591) {};
    \node[main_node] (54) at (1.1431369805512779, 4.857142857142858) {};
    \node[main_node] (55) at (-1.2668252413600103, 0.6532491662146729) {};
    \node[main_node] (56) at (-0.7071498530852125, -2.0894602610411637) {};
    \node[main_node] (57) at (1.3921925283335668, 2.237222023472932) {};
    \node[main_node] (58) at (-1.4610326010913663, -1.8354817490450417) {};
    \node[main_node] (59) at (0.7894221351616064, 2.8627975950053663) {};
    \node[main_node] (60) at (-0.7877431089967835, -2.4759212029589275) {};
    \node[main_node] (61) at (1.669791520917868, 2.47375163006474) {};
    \node[main_node] (62) at (-1.4716664334685878, -1.0166757065537269) {};
    \node[main_node] (63) at (-0.9696376101860911, 0.24675429586973419) {};
    \node[main_node] (64) at (-0.6718903036238997, 1.20644409310699) {};
    \node[main_node] (65) at (-0.07695536588778396, 2.746471558976607) {};
    \node[main_node] (66) at (3.1428571428571423, 0.8574659850207142) {};
    \node[main_node] (67) at (1.1431369805512779, -3.1428571428571423) {};
    \node[main_node] (68) at (-1.1576885406464257, -1.4412657380586946) {};
    \node[main_node] (69) at (0.5487617182034423, 2.4220511696075153) {};
    \node[main_node] (70) at (0.7961382398209054, -2.481091249004651) {};
    \node[main_node] (71) at (2.475164404645305, 0.8561734735092843) {};
    \node[main_node] (72) at (-0.5129424933538544, 0.29651598905981214) {};
    \node[main_node] (73) at (-0.32768993983489736, 0.8322620105478169) {};
    \node[main_node] (74) at (0.24149993004057535, 1.5554222011932648) {};
    \node[main_node] (75) at (-0.613124387855045, -0.6424936239945538) {};
    \node[main_node] (76) at (-0.5442843150972432, -1.1769471339711244) {};
    \node[main_node] (77) at (-0.31761578284594894, -0.17589696836808866) {};
    \node[main_node] (78) at (0.12340842311459355, 0.9905946706980724) {};
    \node[main_node] (79) at (0.660137120470127, 1.7635165545335993) {};
    \node[main_node] (80) at (1.9149293409822319, -0.07508107047649837) {};
    \node[main_node] (81) at (1.2796977752903318, -1.3902115333571816) {};
    \node[main_node] (82) at (1.4940534489995796, 1.2930423643728426) {};
    \node[main_node] (83) at (0.12340842311459355, -1.6945979942991016) {};
    \node[main_node] (84) at (1.0507905414859389, 1.2607295765870772) {};
    \node[main_node] (85) at (0.8291590877291153, -1.2861643566870162) {};
    \node[main_node] (86) at (-0.024345879389954916, -1.2389876865197982) {};
    \node[main_node] (87) at (1.5998320973835165, 0.3928080966613976) {};
    \node[main_node] (88) at (0.09878270603050154, -0.11450267157513316) {};
    \node[main_node] (89) at (0.3567930600251854, 0.5485557337887892) {};

    \path[draw, thick]
    (0) edge node {} (1)
    (0) edge node {} (2)
    (0) edge node {} (3)
    (1) edge node {} (84)
    (1) edge node {} (89)
    (2) edge node {} (85)
    (2) edge node {} (87)
    (3) edge node {} (86)
    (3) edge node {} (88)
    (4) edge node {} (5)
    (4) edge node {} (6)
    (4) edge node {} (40)
    (5) edge node {} (26)
    (5) edge node {} (37)
    (6) edge node {} (24)
    (6) edge node {} (25)
    (7) edge node {} (15)
    (7) edge node {} (16)
    (7) edge node {} (26)
    (8) edge node {} (14)
    (8) edge node {} (24)
    (8) edge node {} (26)
    (9) edge node {} (10)
    (9) edge node {} (32)
    (9) edge node {} (35)
    (10) edge node {} (12)
    (10) edge node {} (18)
    (11) edge node {} (12)
    (11) edge node {} (33)
    (11) edge node {} (36)
    (12) edge node {} (13)
    (13) edge node {} (14)
    (13) edge node {} (15)
    (14) edge node {} (21)
    (15) edge node {} (20)
    (16) edge node {} (17)
    (16) edge node {} (34)
    (17) edge node {} (20)
    (17) edge node {} (38)
    (18) edge node {} (21)
    (18) edge node {} (22)
    (19) edge node {} (21)
    (19) edge node {} (23)
    (19) edge node {} (24)
    (20) edge node {} (33)
    (22) edge node {} (28)
    (22) edge node {} (32)
    (23) edge node {} (27)
    (23) edge node {} (28)
    (25) edge node {} (27)
    (25) edge node {} (39)
    (27) edge node {} (31)
    (28) edge node {} (30)
    (29) edge node {} (30)
    (29) edge node {} (31)
    (29) edge node {} (47)
    (30) edge node {} (55)
    (31) edge node {} (42)
    (32) edge node {} (48)
    (33) edge node {} (49)
    (34) edge node {} (37)
    (34) edge node {} (54)
    (35) edge node {} (36)
    (35) edge node {} (44)
    (36) edge node {} (43)
    (37) edge node {} (52)
    (38) edge node {} (50)
    (38) edge node {} (53)
    (39) edge node {} (42)
    (39) edge node {} (45)
    (40) edge node {} (45)
    (40) edge node {} (51)
    (41) edge node {} (43)
    (41) edge node {} (44)
    (41) edge node {} (74)
    (42) edge node {} (62)
    (43) edge node {} (65)
    (44) edge node {} (64)
    (45) edge node {} (58)
    (46) edge node {} (47)
    (46) edge node {} (62)
    (46) edge node {} (75)
    (47) edge node {} (63)
    (48) edge node {} (55)
    (48) edge node {} (64)
    (49) edge node {} (50)
    (49) edge node {} (65)
    (50) edge node {} (59)
    (51) edge node {} (52)
    (51) edge node {} (60)
    (52) edge node {} (67)
    (53) edge node {} (54)
    (53) edge node {} (61)
    (54) edge node {} (66)
    (55) edge node {} (63)
    (56) edge node {} (58)
    (56) edge node {} (60)
    (56) edge node {} (83)
    (57) edge node {} (59)
    (57) edge node {} (61)
    (57) edge node {} (82)
    (58) edge node {} (68)
    (59) edge node {} (69)
    (60) edge node {} (70)
    (61) edge node {} (71)
    (62) edge node {} (68)
    (63) edge node {} (72)
    (64) edge node {} (73)
    (65) edge node {} (69)
    (66) edge node {} (67)
    (66) edge node {} (71)
    (67) edge node {} (70)
    (68) edge node {} (76)
    (69) edge node {} (79)
    (70) edge node {} (81)
    (71) edge node {} (80)
    (72) edge node {} (73)
    (72) edge node {} (77)
    (73) edge node {} (78)
    (74) edge node {} (78)
    (74) edge node {} (79)
    (75) edge node {} (76)
    (75) edge node {} (77)
    (76) edge node {} (86)
    (77) edge node {} (88)
    (78) edge node {} (89)
    (79) edge node {} (84)
    (80) edge node {} (81)
    (80) edge node {} (87)
    (81) edge node {} (85)
    (82) edge node {} (84)
    (82) edge node {} (87)
    (83) edge node {} (85)
    (83) edge node {} (86)
    (88) edge node {} (89);

\end{tikzpicture}
\end{subfigure}
\caption{The smallest fullerenes whose duals attain maximal asymmetric depth for planar graphs. They have 90 vertices, and their duals have 47 vertices.}
\label{fig:two_tikz_side_by_side}

\end{figure}

\section{Hidden near-mirror symmetries of low-depth IPR fullerenes}\label{sec:hidden}

In this section, we examine fullerenes of low asymmetric depth, as their partial-automorphism structure is constrained in striking ways. The following lemma is a key tool in our analysis, and it relies on the fact that fullerenes are regular graphs. It states that any isomorphism between vertex-deleted subgraphs of a cubic graph can be uniquely extended to an automorphism of the entire graph. 

\begin{lemma} \label{lem:unique_extension}
Let $F$ be a cubic graph, and let $u, v \in V(F)$ be distinct vertices. Any graph isomorphism $\varphi: F-u \to F-v$ extends uniquely to an automorphism $\Phi \in \Aut(F)$ such that $\Phi(u) = v$.
\end{lemma}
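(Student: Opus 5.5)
The plan is to define the extension explicitly and then check adjacency using the degree sequence. Set $\Phi(x) := \varphi(x)$ for $x \neq u$ and $\Phi(u) := v$. Since $\varphi$ is a bijection $V(F)\setminus\{u\} \to V(F)\setminus\{v\}$, the map $\Phi$ is a bijection of $V(F)$. Uniqueness is immediate. Any bijection of $V(F)$ that agrees with $\varphi$ on $V(F)\setminus\{u\}$ must send $u$ to the single remaining vertex, which is $v$. So at most one extension exists, and it is $\Phi$. It remains to show that $\Phi$ preserves adjacency and non-adjacency.

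For a pair $x,y$ with $x,y \neq u$, this holds because $\varphi$ is an isomorphism of induced subgraphs. Indeed, $xy \in E(F)$ iff $xy \in E(F-u)$ iff $\varphi(x)\varphi(y) \in E(F-v)$ iff $\Phi(x)\Phi(y) \in E(F)$.

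The only real step is handling pairs that contain $u$, and here cubicity is used. In $F-u$, a vertex $x$ has degree $3-1=2$ if $x \in N(u)$ and degree $3$ otherwise. Note that $N(u) \subseteq V(F)\setminus\{u\}$, and this includes $v$ when $u \sim v$. So $N(u)$ is exactly the set of degree-$2$ vertices of $F-u$. Likewise, $N(v)$ is exactly the set of degree-$2$ vertices of $F-v$. A graph isomorphism preserves degrees, so $\varphi(N(u)) = N(v)$.

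Hence for every $x \neq u$ we have $ux \in E(F)$ iff $x \in N(u)$ iff $\varphi(x) \in N(v)$ iff $\Phi(u)\Phi(x) = v\varphi(x) \in E(F)$. This covers the case $x = v$ as well: if $u \sim v$, then $v \in N(u)$ and so $\varphi(v) \in N(v)$, consistent with the general argument. Therefore $\Phi \in \Aut(F)$ and $\Phi(u) = v$. Since $u \neq v$, the automorphism $\Phi$ is moreover nontrivial.

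I expect no serious obstacle. The only point needing care is the degree argument: it relies on regularity, so that degree $2$ versus degree $3$ distinguishes the neighbours of the deleted vertex, and on the case $u \sim v$, where $v$ itself is one of the degree-$2$ vertices of $F-u$.
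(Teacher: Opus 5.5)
Your proof is correct and follows essentially the same route as the paper. Both use cubicity to identify $N(u)$ and $N(v)$ as the degree-$2$ vertices of $F-u$ and $F-v$, deduce $\varphi(N(u))=N(v)$, and extend by $\Phi(u)=v$. Your version is slightly more explicit in two respects: you verify non-adjacency as well as adjacency, whereas the paper only checks that edges go to edges and leaves the rest implicit, and you treat the case $u\sim v$ directly.
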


\begin{proof}
Since $F$ is cubic, $F-u$ and $F-v$ each have exactly three vertices of degree~$2$, namely $N_F(u)$ and $N_F(v)$; any isomorphism $\varphi \colon F-u \to F-v$ preserves degrees and so maps $N_F(u)$ onto $N_F(v)$. Extend it by $\Phi(u) = v$. Edges avoiding $u$ are preserved because $\varphi$ is an isomorphism, and each edge $uw$ maps to $v\,\varphi(w)$ with $\varphi(w) \in N_F(v)$, again an edge; so $\Phi$ is an automorphism, and it is the only one with $\Phi(u)=v$ extending $\varphi$.
\end{proof}

\begin{theorem}\label{thm:fullerene-depth-gt-1}
Let $F$ be a fullerene graph of order $n$. Then, $d(F)\in \{0,2,3,4\}$.
\end{theorem}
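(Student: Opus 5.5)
The upper bound $d(F)\le 4$ is already in hand. For adjacent vertices $u\sim v$ of a cubic graph we have $|\Delta_{uv}|\le 4$, so \Cref{cor:basic-ineq} gives $d(F)\le 4$. Trivially $d(F)\ge 0$, with $d(F)=0$ exactly when $F$ is symmetric. The whole content of the theorem is therefore to exclude $d(F)=1$: we must show that whenever a fullerene $F$ admits a nontrivial partial automorphism of rank $n-1$, it already admits a nontrivial automorphism.

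So let $\varphi$ be a nontrivial partial automorphism with $|\dom(\varphi)|=n-1$, say $\dom(\varphi)=V\setminus\{u\}$ and $\ran(\varphi)=V\setminus\{v\}$. Thus $\varphi$ is an isomorphism $F-u\to F-v$. I split into two cases according to whether $u=v$.

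\emph{Case $u\neq v$.} \Cref{lem:unique_extension} applies directly, since fullerenes are cubic. It extends $\varphi$ to an automorphism $\Phi$ with $\Phi(u)=v\neq u$. This $\Phi$ is nontrivial, so $d(F)=0$.

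\emph{Case $u=v$.} Here $\varphi$ is a nontrivial automorphism of $F-u$. The degree-$2$ vertices of $F-u$ are exactly $N(u)$, so $\varphi$ permutes $N(u)$. Extending by $u\mapsto u$ therefore preserves all edges $uw$, and we obtain an automorphism of $F$. It is nontrivial because $\varphi$ moves some vertex. Again $F$ is symmetric.

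In both cases $d(F)\ne 1$, hence $d(F)\in\{0,2,3,4\}$. The only point needing care is the second case, which \Cref{lem:unique_extension} does not cover literally since it assumes $u\neq v$. The same degree argument handles it.

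I would also remark that the statement asserts only membership in the set, not that every value $2,3,4$ is attained. If attainment is intended, it would be supplied by the computational data (\Cref{table:asym_ipr}), for instance the depth-$2$ IPR fullerene on $92$ vertices, rather than by the argument above.
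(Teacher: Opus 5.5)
Your proof is correct and follows the paper's own argument. The bound $d(F)\le 4$ comes from a local transposition of two adjacent vertices, and $d(F)=1$ is excluded by splitting on whether the deleted vertices $u,v$ coincide, using \Cref{lem:unique_extension} when $u\neq v$; your degree-$2$ argument for $u=v$ supplies the justification that the paper leaves implicit when it simply extends by $u\mapsto u$.
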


\begin{proof}
Suppose $d(F)=1$. Then $F$ admits a non-trivial partial automorphism $\varphi$ of rank $n-1$; in particular, $\Aut(F)=\{\id\}$, i.e., $F$ is asymmetric. Let the domain of $\varphi$ be $F-u$ and the range be $F-v$. 

If $u = v$, then $\varphi$ is a non-identity automorphism of $F-u$. By mapping $u \mapsto u$, this extends uniquely to a non-identity automorphism of $F$, contradicting $\Aut(F) = \{\id\}$. 
If $u \neq v$, by Lemma~\ref{lem:unique_extension}, the isomorphism $\varphi\colon F-u\to F-v$ extends uniquely to an automorphism $\Phi\in\Aut(F)$ with $\Phi(u)=v$. Since $\Aut(F)$ is trivial, $\Phi=\id$, forcing $u=v$---a contradiction.

Since $F$ is cubic, by Lemma~\ref{lem:swap} there is a partial automorphism of rank $n-4$ swapping two vertices; therefore, $d(F) \le 4$. Consequently, $d(F) \in \{0,2,3,4\}$.
\end{proof}

All four values permitted by \Cref{thm:fullerene-depth-gt-1} are realised by IPR fullerenes in the enumerated range, though not simultaneously at every size: at $n = 84$ the unique asymmetric IPR fullerene has $d(F) = 3$, the first depth-$4$ examples appear at $n = 86$, and the first depth-$2$ example at $n = 92$ (\Cref{table:asym_ipr}). The depth-$2$ and depth-$3$ column counts are not monotone in $n$: they correspond to structurally specific configurations (cf.\ \Cref{cor:hidden-symmetry-primal}) that do not appear in every size class.

\subsection{Localised partial automorphisms in the fullerene}\label{subsec:localised}

Fullerenes are cyclically 5-edge-connected, so they contain no cyclic $3$- or $4$-edge cuts. Furthermore, Kardo\v{s} and \v{S}krekovski (Corollary~6 in \cite{cyclic_edge_cuts}) proved that a fullerene graph contains a nontrivial cyclic $5$- or $6$-edge cut if and only if it contains a pair of adjacent pentagons. Since this paper considers only IPR fullerenes, our fullerene graphs have no non-trivial cyclic $5$- or $6$-edge cuts; in particular, every non-trivial cyclic edge cut in an IPR fullerene has size at least~$7$.  

Any largest nontrivial partial automorphism that realises a low asymmetric depth in an asymmetric IPR fullerene must spread its support across at least two distinct faces of the fullerene.

This structural property of IPR fullerenes implies that any partial automorphism with support contained within a single face must be trivial, as we now show.

\begin{definition}[Face-localised partial automorphism]\label{def:face-localised}
Let $F$ be a fullerene graph and $\varphi$ a partial automorphism of $F$ with support $\supp(\varphi)$.  We say $\varphi$ is \emph{face-localised} if there exists a face $F^\circ$ of $F$ such that $\supp(\varphi) \subseteq V(F^\circ)$.
\end{definition}

Each $v\in V(F^\circ)$ has degree $3$ with two neighbours on $\partial F^\circ$, so its
third neighbour $\ex(v)\notin V(F^\circ)$ is well defined.

\begin{observation}[Distinct exterior neighbours]\label{lem:distinct-exterior}
Let $F$ be an IPR fullerene and $F^\circ$ a face of $F$.  For every $v \in V(F^\circ)$ let $\ex(v) \in V(F) \setminus V(F^\circ)$ denote the unique neighbour of $v$ outside $F^\circ$.  Then the map $v \mapsto \ex(v)$ is injective; equivalently, the $|V(F^\circ)|$ exterior neighbours are pairwise distinct.
\end{observation}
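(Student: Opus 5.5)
We argue by contradiction. Suppose two distinct vertices $v, w \in V(F^\circ)$ have the same exterior neighbour $x := \ex(v) = \ex(w)$. The face $F^\circ$ has length $\ell \in \{5,6\}$. We split according to whether $v$ and $w$ are adjacent on the boundary cycle $\partial F^\circ$.

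\emph{Adjacent case.} If $v \sim w$ on $\partial F^\circ$, then $v, w, x$ form a triangle. This contradicts the fact that a fullerene has girth $5$: its faces are pentagons and hexagons, and cyclic $5$-edge-connectivity rules out shorter cycles. More elementarily, a triangle $vwx$ in a cubic $3$-connected plane graph would bound a triangular face or be separating, and neither is possible here.

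\emph{Non-adjacent case.} If $v \not\sim w$, then $v$ and $w$ split $\partial F^\circ$ into two paths $P_1, P_2$. Each has length at least $2$, and their lengths sum to $\ell$. Each closed walk $C_i := P_i + vx + xw$ is a cycle of length $|P_i| + 2$. Since $\ell \le 6$, one of these paths, say $P_1$, has length at most $3$, so $C_1$ has length at most $5$.

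\emph{Key step.} The cycle $C_1$ separates the plane. One side contains the interior of $F^\circ$. We examine the other side, the region bounded by $C_1$ that avoids $F^\circ$.

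The interior vertices of $P_1$ have their exterior neighbours on that side, since the third edge of each leaves $F^\circ$ and cannot cross $C_1$. Likewise, the third neighbour of $x$ (besides $v$ and $w$) lies on one side of $C_1$.

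Now take the edges leaving the cycle $C_1$ into the side away from $F^\circ$. These are the exterior edges of the interior vertices of $P_1$ (one or two edges), plus possibly the third edge of $x$. So at most $3$ edges cross.

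\emph{Two sub-cases on the far side.}
\begin{itemize}[leftmargin=2em]
\item If that side contains a cycle, these crossing edges form a cyclic edge cut of size at most $3$. The other side contains the cycle $\partial F^\circ$. This contradicts cyclic $5$-edge-connectivity~\cite{cyclic_edge_cuts}.
\item If that side is acyclic, it is a forest. In a cubic graph, a forest attached by at most $3$ edges is essentially empty. So $C_1$ (of length at most $5$) would itself be a face, adjacent to $F^\circ$ along $P_1$.
\end{itemize}

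In the second sub-case, a face of length at most $5$ must be a pentagon. So $|P_1| = 3$ and $\ell = 6$. But then $x$ must have its third neighbour inside that face, which is impossible. More carefully, $x$ has degree $3$ and the count forces a contradiction.

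\emph{Expected obstacle.} The main difficulty is this case analysis on sizes: $\ell \in \{5,6\}$ and path splits $(2,3)$, $(2,4)$, $(3,3)$. In particular we must show that a small cycle $C_1$ of length $4$ or $5$ through $x$ cannot be a face. Length $4$ is immediately excluded by girth $5$.

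For length $5$ we need to exclude $C_1$ being a pentagonal face. If $C_1$ were a face, then $x$ would have degree $2$ plus its third edge, and the interior vertex of $P_1$ would be incident with three faces: $F^\circ$, $C_1$, and the face across its exterior edge. Tracking degrees shows $C_1$ cannot be a face unless the exterior edge lies inside it, a contradiction.

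If instead a pentagonal $F^\circ$ is adjacent to the pentagon $C_1$, this is directly excluded by IPR. This is where the IPR hypothesis enters.
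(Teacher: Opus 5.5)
Your proof is correct in substance and follows the paper's route. Girth $5$ rules out boundary distance at most $2$, so $F^\circ$ is a hexagon with $v,w$ opposite. Writing $a,b$ for the interior vertices of $P_1$, the $5$-cycle $C_1 = v\,a\,b\,w\,x$ then gives the contradiction. The paper quotes the general fact that every $5$-cycle of a fullerene bounds a face, and concludes because that face would share the three edges of $P_1$ with $F^\circ$. That general fact is proved by exactly the side-counting with cyclic $5$-edge-connectivity that you inline. Three steps of your write-up need repair.

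\begin{enumerate}
\item A forest attached by at most three edges is not ``essentially empty''. A single vertex has exactly three boundary edges, so you must exclude a far-side vertex $y$ adjacent to $a$, $b$ (and $x$). This is immediate: $y\,a\,b$ would be a triangle.
\item The final contradiction does not come from the third neighbour of $x$, which may well lie on the side of $C_1$ containing $F^\circ$. It comes from $a$ and $b$, as your ``expected obstacle'' paragraph correctly says. Their exterior edges enter the far side, so that side is nonempty from the start and $C_1$ can never bound a face. The detour through ``$C_1$ would itself be a face'' is therefore unnecessary.
\item The IPR hypothesis plays no role. For a pentagonal $F^\circ$ the shorter arc has length $2$ and closes a $4$-cycle, so the case you assign to IPR never arises. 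Neither your argument nor the paper's uses IPR; both work in every fullerene.
\end{enumerate}
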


Indeed, a common exterior neighbour of two vertices of $F^\circ$ at boundary distance $\ell$ would close a cycle of length $\ell + 2$, so girth $5$ forces $\ell \ge 3$ and hence $|V(F^\circ)| = 6$ and $\ell = 3$; the resulting $5$-cycle bounds a pentagonal face (see the facts collected before the proof of \Cref{thm:unified-localisation}), which would then share three edges with $F^\circ$, contradicting \Cref{lem:three-faces}\ref{it:faces-a}.

The next two observations isolate the girth and planarity arguments used repeatedly below.

\begin{observation}[Anchors]\label{lem:anchor}
Let $F$ be a graph of girth at least $5$, let $\varphi$ be a partial automorphism of $F$, and let $u \in \dom(\varphi)$ with $\varphi(u) \neq u$. Then every neighbour of $u$ fixed by $\varphi$ is a common neighbour of $u$ and $\varphi(u)$. Consequently, $u$ has at most one fixed neighbour, and none at all if $u$ is adjacent to $\varphi(u)$. The same statements hold with $\varphi^{-1}$ in place of $\varphi$.
\end{observation}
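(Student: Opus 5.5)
The argument uses only the defining property of a partial automorphism: it preserves both adjacency and non-adjacency between domain vertices. Girth is used to exclude short cycles.

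Let $w$ be a neighbour of $u$ with $w \in \dom(\varphi)$ and $\varphi(w) = w$. Since $u, w \in \dom(\varphi)$ and $uw \in E(F)$, the edge $\varphi(u)\varphi(w) = \varphi(u)\,w$ lies in $E(F)$. Thus $w$ is adjacent to both $u$ and $\varphi(u)$, and $w \ne u, \varphi(u)$ because $w$ is fixed and $u$ is not. So $w$ is a common neighbour.

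For the consequences, suppose first that $u$ had two distinct fixed neighbours $w_1, w_2$. Both would be common neighbours of $u$ and $\varphi(u)$, so $u\,w_1\,\varphi(u)\,w_2$ would be a closed walk of length $4$. Its four vertices are distinct: $u \ne \varphi(u)$, and $w_1, w_2$ differ from both. It is therefore a $4$-cycle, contradicting girth at least $5$. Next, suppose $u \sim \varphi(u)$ and $u$ has a fixed neighbour $w$. Then $u\,w\,\varphi(u)$ is a triangle, again contradicting the girth hypothesis.

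For $\varphi^{-1}$, note that $\varphi^{-1}$ is itself a partial automorphism, since it is the inverse isomorphism between the same induced subgraphs. Its fixed points coincide with those of $\varphi$. Applying the statement to $\varphi^{-1}$ and a vertex $u \in \ran(\varphi)$ with $\varphi^{-1}(u) \ne u$ gives the corresponding claims.

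The only point that needs care is the interpretation of "fixed by $\varphi$". It must mean a vertex of $\dom(\varphi)$ with $\varphi(w) = w$, which is exactly the paper's $\fix(\varphi)$. A vertex outside $\dom(\varphi)$ carries no constraint and is not counted. With that reading, each step above is immediate.
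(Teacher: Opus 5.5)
Your proof is correct and follows the paper's argument: a fixed neighbour $w$ of $u$ gives the edge $\varphi(u)\,w$, so $w$ is a common neighbour of $u$ and $\varphi(u)$. Girth at least $5$ then rules out two such neighbours (a $4$-cycle) and any common neighbour when $u \sim \varphi(u)$ (a triangle), and the $\varphi^{-1}$ case follows from $\fix(\varphi^{-1}) = \fix(\varphi)$; your explicit distinctness check for the four vertices of the closed walk is a detail the paper leaves implicit.
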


\begin{proof}
If $w \in \fix(\varphi)$ and $uw \in E(F)$, then $\varphi$ maps the edge $uw$ to the edge $\varphi(u)\,w$, so $w \in N(u) \cap N(\varphi(u))$. In a graph of girth at least $5$, two distinct vertices have at most one common neighbour (two would close a $4$-cycle), and adjacent vertices have none (a triangle). Since $\varphi^{-1}$ is a partial automorphism with $\fix(\varphi^{-1}) = \fix(\varphi)$, the last claim follows.
\end{proof}

\begin{observation}[Pairwise meeting faces]\label{lem:three-faces}
Let $F$ be a fullerene graph.
\begin{enumerate}[label=\textup{(\alph*)}]
  \item\label{it:faces-a} Two distinct faces of $F$ share at most one edge; if they share an edge, they share no further vertex; and if they share a vertex, they share an edge incident with it. In particular, the vertex sets of two distinct faces intersect in $\emptyset$, a single vertex, or the two endpoints of a shared edge.
  \item\label{it:faces-b} If three distinct faces of $F$ pairwise share a vertex, then all three share a common vertex.
\end{enumerate}
\end{observation}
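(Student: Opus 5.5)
We prove both parts from three standard facts about fullerene graphs. First, $F$ is $3$-connected and planar, so every face boundary is an induced cycle of length $5$ or $6$. Second, every edge lies on exactly two faces and every vertex on exactly three. Third, $F$ has girth $5$, every $5$-cycle bounds a pentagonal face, and $F$ is cyclically $5$-edge-connected.

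\emph{Part \ref{it:faces-a}.} Let $f\neq g$ be faces sharing a vertex $x$.

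\begin{enumerate}[label=(\roman*)]
  \item \emph{A shared vertex forces a shared incident edge.} The three faces at $x$ are exactly the three angular sectors between consecutive edges in the rotation at $x$. Any two of these sectors are consecutive, since there are only three, so they share the edge separating them. Hence $f$ and $g$ share an edge at $x$.
  \item \emph{Shared edges form a single path.} Suppose the boundaries of $f$ and $g$ have two common vertices that are not joined by a common edge path. Choose arcs $P \subseteq \partial f$ and $Q \subseteq \partial g$ between them with disjoint interiors. Then $P \cup Q$ is a cycle whose removal separates the interior vertices of $P$ and $Q$, or else it bounds a region containing other faces. Because every vertex has degree $3$, the edges leaving this cycle give a cyclic edge cut of size at most $|P| + |Q| - 2$ minus the shared part. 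A short count shows that only the case where $f$ and $g$ share a connected path can survive.
  \item \emph{The path is a single edge.} If $f$ and $g$ shared a path of length at least $2$, its middle vertex would lie only on $f$ and $g$, since it has degree $3$ and both of its path edges are shared. That vertex would then have only two incident faces, a contradiction. So the shared path is one edge.
  \item \emph{No further common vertex.} If $f$ and $g$ share the edge $e$ and also a vertex $y \notin e$, then by (i) they share an edge at $y$ as well. Two disjoint shared edges together with the two face boundaries create a cycle separating $F$ with at most $4$ outgoing edges. This is a cyclic edge cut of size at most $4$, which cyclic $5$-edge-connectivity forbids.
\end{enumerate}

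The main obstacle is step (iv). The direct argument is to let $e = ab$ and $e' = cd$ be the two shared edges. Each face boundary consists of these edges plus two connecting arcs, so $\partial f \cup \partial g$ is a theta-like structure of total length at most $12$. One of the closed curves formed by one arc of $f$ and one arc of $g$ bounds a region disjoint from the interiors of $f$ and $g$. The edges leaving that closed curve form an edge cut with at most $4$ edges and a cycle on each side. That cut is forbidden.

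\emph{Part \ref{it:faces-b}.} Let $f_1, f_2, f_3$ be three faces that pairwise share vertices. By part \ref{it:faces-a}, each pair shares exactly one edge: $e_{12}$, $e_{13}$, $e_{23}$. Suppose these three edges do not all meet at a common vertex.

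\begin{enumerate}[label=(\roman*)]
  \item Walking along $\partial f_1$ from $e_{12}$ to $e_{13}$, then along $\partial f_3$ to $e_{23}$, then along $\partial f_2$ back to $e_{12}$, produces a closed curve $C$. This curve separates the plane into the region covered by $f_1 \cup f_2 \cup f_3$ and its complement.
  \item The complement is either a single face or contains a cycle. The number of edges leaving $C$ into the complement is at most $(5-3) \cdot 3 = 6$ minus the corrections for shared vertices. In fact $C$ itself is short: a triangle of faces around a missing vertex, as in the dual, gives $C$ of length at most $9$.
  \item If all three faces meet at a common vertex, we are done. Otherwise the three faces surround a region that must contain a cycle. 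This yields a cyclic edge cut of size at most $3$, or girth-short cycles, contradicting cyclic $5$-edge-connectivity.
\end{enumerate}

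Equivalently, in the dual triangulation the three face-vertices form a triangle that is not a facial triangle. It is therefore a separating $3$-cycle. But the dual of a fullerene is $5$-connected apart from its trivial degree-$5$ separators, a fact equivalent to cyclic $5$-edge-connectivity. So the dual has no separating triangles, and the triangle must be facial, i.e.\ the three faces share a common vertex. I would present (b) through this dual reformulation, which is cleaner.
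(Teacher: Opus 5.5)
This is essentially the paper's argument: your (a)(i) is its remark that the three faces at a cubic vertex pairwise share an edge, your (iii)--(iv) spell out what the paper simply cites as the standard face-intersection pattern of $3$-connected plane graphs, and your dual route for (b) is exactly its argument. For (b) all you need is that the triangle $f_1f_2f_3$ of $F^*$ is not separating, since otherwise $\{e_{12},e_{13},e_{23}\}$ would be a cyclic $3$-edge cut of $F$. The clause about trivial degree-$5$ separators is therefore unnecessary, and if read as saying these are the only $5$-separators it fails for some non-IPR fullerenes.

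When you write it up, omit the vague step (ii) and the primal sketch of (b); neither is needed, because (iii) excludes adjacent shared edges and (iv) excludes disjoint ones. Finish (iv) by girth rather than by a cyclic cut, since your claim of a cycle on each side is not justified as stated. The interior vertices of the four arcs send $|f|+|g|-8\le 4$ edges into the two discs complementary to $\overline{f}\cup\overline{g}$. So one disc receives at most two of them and is bounded by a cycle of length at most $4$, contradicting girth~$5$.
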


Both parts are standard facts about the face structure of fullerene graphs: (a) is the usual intersection pattern of faces in a $3$-connected cubic plane graph---in particular the three faces at a vertex pairwise share one of the three edges at that vertex, so faces meeting in a vertex meet in an edge incident with it---and (b) expresses that in the dual triangulation three pairwise-adjacent vertices form a facial rather than a separating triangle, the latter being excluded by cyclic $5$-edge-connectivity (cf.\ \cite{andovaMathematicalAspectsFullerenes2016,cyclic_edge_cuts}).


The next technical lemma is a formalisation of the intuition that large nontrivial partial automorphisms with small support cannot be contained within a single face of an IPR fullerene, because the unique exterior neighbours of the vertices in the support would have to be fixed by the partial automorphism, which would force the vertices in the support to be fixed as well.

\begin{lemma}[Support is not face-localised]\label{lem:support-not-face-localised}
Let $F$ be an IPR fullerene on $n$ vertices, and let $\varphi$ be a nontrivial partial automorphism of $F$ with $\rank(\varphi) = n - k$ and $k \le 3$.  Then $\varphi$ is not face-localised: $\supp(\varphi) \not\subseteq V(F^\circ)$ for any face $F^\circ$ of $F$.
\end{lemma}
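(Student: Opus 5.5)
The plan is a counting argument on the vertices that $\varphi$ is forced to omit. We suppose $\supp(\varphi)\subseteq V(F^\circ)$ for some face $F^\circ$ and write $S:=\supp(\varphi)$, with $S\neq\emptyset$ since $\varphi$ is nontrivial. We aim to show that $\varphi$ must omit at least four vertices from its domain, contradicting $k\le 3$.

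\textbf{Step 1: vertices outside $S$.} A vertex $x\notin S$ lies in $\dom(\varphi)$ if and only if it lies in $\ran(\varphi)$, and in that case it is fixed. Indeed, if $x\in\ran(\varphi)$ with preimage $y\neq x$, then $x\in S$ by the symmetric definition of support. Hence there is a single set $X:=V(F)\setminus(S\cup\dom(\varphi))=V(F)\setminus(S\cup\ran(\varphi))$. Moreover, $\varphi$ maps $S\cap\dom(\varphi)$ bijectively onto $S\cap\ran(\varphi)$. So $k=|X|+|S\setminus\dom(\varphi)|$, and $|S\setminus\dom(\varphi)|=|S\setminus\ran(\varphi)|$.

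\textbf{Step 2: exterior neighbours are all lost.} Let $u\in S\cap\dom(\varphi)$, so $\varphi(u)=w\neq u$ and $w\in S\subseteq V(F^\circ)$. Suppose $\ex(u)$ were fixed. By \Cref{lem:anchor} it would then be a common neighbour of $u$ and $w$. The only neighbour of $w$ outside $F^\circ$ is $\ex(w)$, and $\ex(w)\neq\ex(u)$ by \Cref{lem:distinct-exterior}, so this is impossible. Since $\ex(u)\notin S$, Step~1 gives $\ex(u)\in X$. Applying the same argument to $\varphi^{-1}$ for $u\in S\cap\ran(\varphi)$ yields the following, using injectivity of $\ex$:
\[
  \ex(S)\subseteq X, \qquad\text{and hence}\qquad k\ \ge\ |S|+|S\setminus\dom(\varphi)|.
\]
A nontrivial $\varphi$ moves some $u$ to some $w\neq u$, so $|S|\ge 2$. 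It therefore remains to exclude $|S|\in\{2,3\}$.

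\textbf{Step 3: boundary neighbours (the main obstacle).} We now use the two neighbours of each $u\in S$ on the boundary cycle of $F^\circ$, whose length is $5$ or $6$. By \Cref{lem:anchor}, a moved $u$ has at most one fixed neighbour, and none at all if $u\sim\varphi(u)$. Every boundary neighbour of $u$ outside $S$ that is not fixed lies in $X$. Such vertices are distinct from $\ex(S)$ because they lie on $F^\circ$.

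For $|S|=2$, write $S=\{u,w\}$ and split into two cases.
\begin{itemize}[leftmargin=2em]
\item If $\varphi$ swaps $u$ and $w$: when $u\sim w$, neither may have a fixed neighbour, so their other boundary neighbours lie in $X$. These are distinct since the face length is at least $5$, giving $k\ge 4$. When $u\not\sim w$, they have at most one common (possibly fixed) neighbour, and each has another non-fixed boundary neighbour. These are again distinct on a $5$- or $6$-cycle, so $k\ge 4$.
\item If $\varphi(u)=w$ with $w\notin\dom(\varphi)$: then $|S\setminus\dom(\varphi)|=1$, and some boundary neighbour of $u$ other than $w$ is non-fixed and lies in $X$. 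This again gives $k\ge 4$.
\end{itemize}

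For $|S|=3$ we need only one vertex of $X$ beyond $\ex(S)$, or one vertex of $S$ outside the domain. I would first try the case $S\subseteq\dom(\varphi)$, where $\varphi$ permutes $S$ as a $3$-cycle. Here I would take an end vertex $u$ of $S$ along the boundary cycle with a boundary neighbour $y\notin S$. If $y$ is fixed, $y$ must be adjacent to both $u$ and $\varphi(u)$; applying the same reasoning to $\varphi^{-1}$ forces $y$ to be adjacent to $\varphi^{-1}(u)$ as well. Then $y$ would have three neighbours in $S\subseteq V(F^\circ)$, impossible for a vertex on the boundary cycle.

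This small case analysis on the positions of $S$ around a pentagon or hexagon is where I expect the bookkeeping to be most delicate. It uses only \Cref{lem:anchor}, \Cref{lem:distinct-exterior} and girth~$5$.
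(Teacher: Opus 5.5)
Your proof is correct and complete, and it takes a different route from the paper's through the case analysis. Both arguments rest on the same mechanism: a moved vertex of $F^\circ$ cannot keep a fixed exterior neighbour. By \Cref{lem:anchor} that neighbour would also be adjacent to the image, and this contradicts injectivity of $\ex$.

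The paper charges this only to the moved domain vertices $M$. That gives $k \ge |V(F^\circ)\setminus\dom(\varphi)| + |M|$, and the paper must then show the right-hand side is at least $4$. One branch of that case split ($|V(F^\circ)\setminus\dom(\varphi)|\le 1$) classifies $\varphi$ on the face cycle as $p\mapsto\varepsilon p+c$ and counts the fixed points of reflections.

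You also apply the mechanism to $\varphi^{-1}$, so the exterior neighbours of the exit vertices are charged as well. Together with your Step~1 this gives the sharper bound $k\ge|\supp(\varphi)|+|\supp(\varphi)\setminus\dom(\varphi)|$. This excludes outright supports of size at least $4$, and supports of size $3$ containing a vertex outside the domain. The two remaining cases ($|\supp(\varphi)|=2$, and a $3$-cycle on the support) need nothing beyond \Cref{lem:anchor}. Your route is shorter and more uniform; the paper's makes the rotation/reflection structure of $\varphi$ on the face explicit.

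Two points should be tightened when you write it up:
\begin{itemize}
\item In the non-adjacent swap case, justify distinctness explicitly. The only possible common neighbour of $u$ and $w$ is the middle vertex of a boundary path of length~$2$, since an off-face common neighbour would give $\ex(u)=\ex(w)$. Discarding that vertex if present, $u$ and $w$ still have distinct non-fixed boundary neighbours, and these lie in $X$.
\item Drop the hedging at the end. The case $|\supp(\varphi)|=3$ with $\supp(\varphi)\not\subseteq\dom(\varphi)$ is already closed by your inequality, and your $3$-cycle argument (a fixed boundary neighbour would be adjacent to all three support vertices) finishes the proof.
\end{itemize}
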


\begin{proof}
Suppose for contradiction that $\supp(\varphi) \subseteq V(F^\circ)$ for some face $F^\circ$ of $F$, which is either a pentagon ($|V(F^\circ)| = 5$) or a hexagon ($|V(F^\circ)| = 6$).  Let $D := \dom(\varphi)$ and $S_D := V(F) \setminus D$, so $|S_D| = k \le 3$.

Since fullerenes have girth at least $5$, the face boundary $C_{F^\circ} = \partial F^\circ$ is a chordless cycle in $F$.  Hence $F[V(F^\circ)] = C_{F^\circ}$, and $\varphi$ restricted to $D \cap V(F^\circ)$ is a graph isomorphism between two induced subgraphs of the cycle $C_{F^\circ}$.  Vertices of $V(F) \setminus V(F^\circ)$ that lie in $D$ are fixed by $\varphi$ (since $\supp(\varphi) \subseteq V(F^\circ)$).

For every $v \in V(F^\circ) \cap D$, the unique exterior neighbour $\ex(v)$ is either in $S_D$ (i.e., removed from the domain) or fixed by $\varphi$ (it lies outside $V(F^\circ) \supseteq \supp(\varphi)$).  Moreover, if $v$ is moved, then $\varphi(v) \in \supp(\varphi) \subseteq V(F^\circ)$ by the symmetric convention on supports.  If $\ex(v)$ is fixed, the edge $v \, \ex(v)$ in $F$ must map to the edge $\varphi(v) \, \varphi(\ex(v)) = \varphi(v) \, \ex(v)$, so $\ex(v)$ is also a neighbour of $\varphi(v)$; since $\ex(v) \notin V(F^\circ)$ and $\varphi(v) \in V(F^\circ)$, it is the exterior neighbour of $\varphi(v)$.  This gives the key identity
\begin{equation}\label{eq:primal-exterior}
  \ex(v) = \ex(\varphi(v))
  \qquad\text{whenever } \ex(v) \in D.
\end{equation}
By \Cref{lem:distinct-exterior} the map $\ex$ is injective; combined with~\eqref{eq:primal-exterior} this forces $\varphi(v) = v$ whenever $\ex(v) \in D$.

Let $M := \{ v \in V(F^\circ) \cap D \mid \varphi(v) \ne v \}$ denote the set of vertices of $F^\circ$ actually moved by $\varphi$.  By the previous paragraph, $\ex(v) \in S_D$ for every $v \in M$, and by \Cref{lem:distinct-exterior} the exterior vertices $\{\ex(v) : v \in M\}$ are pairwise distinct.  Hence
\begin{equation}\label{eq:primal-S_D-lower}
  |S_D \cap (V(F) \setminus V(F^\circ))| \;\ge\; |M|.
\end{equation}

Because $\varphi$ is nontrivial and $\supp(\varphi) \subseteq V(F^\circ)$, the restriction $\sigma := \varphi|_{V(F^\circ) \cap D}$ is a non-identity graph isomorphism between two induced subgraphs of the cycle $C_{F^\circ}$.  Write $X := V(F^\circ) \cap S_D$ and $A := V(F^\circ) \cap D = V(F^\circ) \setminus X$, so $|X| \le |S_D| = k \le 3$.

\medskip
\noindent\textit{Claim.} $|X| + |M| \ge 4$.

\smallskip
Since $\sigma$ is non-identity, $|M| \ge 1$, so the claim holds when $|X| = 3$. The remaining values $|X| \le 2$ are settled by the two cases below: the case $|X| \le 1$ is treated directly, and for $|X| = 2$ it suffices to show $|M| \ge 2$, which holds because the case $|M| = 1$ below forces $|X| \ge 3$, incompatible with $|X| = 2$. Throughout, $\sigma(A) \subseteq V(F^\circ)$: a moved vertex maps into $\supp(\varphi) \subseteq V(F^\circ)$, and a fixed vertex is its own image.

\smallskip
\noindent\textit{Case $|X| \le 1$.}\ Identify $V(F^\circ)$ with $\mathbb{Z}_L$, $L := |V(F^\circ)| \in \{5, 6\}$, along the cycle. Here $A$ is the whole cycle or an induced path on $L - 1$ vertices, and $\sigma$ maps consecutive vertices of $A$ to adjacent vertices of $C_{F^\circ}$, so $\sigma(p) = \varepsilon p + c \pmod{L}$ on $A$ for some $\varepsilon \in \{\pm 1\}$, $c \in \mathbb{Z}_L$. If $\varepsilon = +1$, a fixed point forces $c = 0$ and $\sigma = \mathrm{id}$, which is excluded; hence $|M| = |A| = L - |X|$ and $|X| + |M| = L \ge 5$. If $\varepsilon = -1$, the fixed points satisfy $2p \equiv c \pmod{L}$, which has at most one solution for $L = 5$ and at most two for $L = 6$; hence $|X| + |M| \ge L - 1 = 4$ for $L = 5$ and $|X| + |M| \ge L - 2 = 4$ for $L = 6$.

\smallskip
\noindent\textit{Case $|M| = 1$.}\ Write $M = \{v\}$ and $w' := \varphi(v) \ne v$; then $w' \in \supp(\varphi) \subseteq V(F^\circ)$, and $w' \notin D$---for if $w' \in D$, then $\varphi(w') \neq w'$ would place $w'$ in $M$, contradicting $M = \{v\}$ (as $w' \neq v$), while $\varphi(w') = w'$ would give two distinct domain vertices $v \neq w'$ with the same image $w'$, contradicting injectivity of $\varphi$---so $w' \in X$; moreover $w'$ is the only moved-onto vertex. By \Cref{lem:anchor} (applied to $\varphi$ at $v$ and to $\varphi^{-1}$ at $w'$), every fixed neighbour of $v$ or of $w'$ is a common neighbour of $v$ and $w'$. But $v$ and $w'$ have at most one common neighbour, and only the middle vertex of a $v$--$w'$ path of length two along $C_{F^\circ}$ can be one: adjacent vertices have none (girth), a second common neighbour would close a $4$-cycle, and an off-cycle common neighbour would force $\ex(v) = \ex(w')$, contradicting \Cref{lem:distinct-exterior}. Hence every cycle-neighbour of $v$ or of $w'$, other than this possible middle vertex and other than $v, w'$ themselves, is neither fixed nor moved, and so lies in $X$. In each of the three relative positions of $v$ and $w'$ on the cycle (adjacent, at cycle-distance $2$, at cycle-distance $3$), these vertices together with $w'$ give at least three distinct elements of $X$; hence $|X| \ge 3$, and therefore $|X| + |M| = |X| + 1 \ge 4$.

\medskip
This proves the claim.  Combining the claim with~\eqref{eq:primal-S_D-lower} and the fact that the exterior vertices $\{\ex(v) : v \in M\}$ are $|M|$ distinct elements of $S_D \setminus V(F^\circ)$:
\[
  k \;=\; |S_D|
    \;=\; |X| + |S_D \setminus V(F^\circ)|
    \;\ge\; |X| + |M|
    \;\ge\; 4.
\]
This contradicts $k \le 3$. Therefore $\supp(\varphi) \not\subseteq V(F^\circ)$ for any face $F^\circ$ of $F$.
\end{proof}

\subsection{Partial automorphisms of small deficiency are not localised}\label{subsec:unified}

We now prove the main structural result of this section. It says that a partial automorphism of an IPR fullerene of small deficiency can never confine its nontrivial action to a region attached to the pointwise-fixed remainder of the cage by a small edge interface, nor to a single face. The statement requires neither the asymmetry of $F$ nor the extremality of $\varphi$, and covers every partial automorphism of small deficiency---including those whose domain and range differ. The structure theorem for asymmetric IPR fullerenes of asymmetric depth $2$ and $3$ announced in the introduction is the corollary recorded in \Cref{cor:hidden-symmetry-primal} below.

Throughout, $F$ is an IPR fullerene on $n$ vertices and $\varphi$ is a nontrivial partial automorphism of $F$ with domain $D = \dom(\varphi)$ and $S_D = V(F) \setminus D$. We call
\[
  k \;:=\; n - \rank(\varphi) \;=\; |S_D|
\]
the \emph{deficiency} of $\varphi$; if $F$ is asymmetric and $\varphi$ realises $d(F)$, then $k = d(F)$. Recall that a moved vertex cannot have a fixed image, so $\varphi(\supp(\varphi)) \subseteq \supp(\varphi) \cup S_D$; the vertices of $\ran(\varphi) \setminus D$ are called \emph{exit vertices} and those of $D \setminus \ran(\varphi)$ \emph{entry vertices}. Exit vertices are the reason why a partial automorphism need not decompose as $\alpha \cup \id_R$: a moved vertex may be mapped onto a deleted vertex.

\begin{theorem}[Non-localisation for small deficiency]\label{thm:unified-localisation}
Let $F$ be an IPR fullerene and let $\varphi$ be a nontrivial partial automorphism of $F$ of deficiency $k \in \{1,2,3\}$. Then:
\begin{enumerate}[label=\textup{(\roman*)}]
  \item\label{it:unified-face} $\varphi$ is not face-localised: $\supp(\varphi) \not\subseteq V(F^\circ)$ for every face $F^\circ$ of $F$;
  \item\label{it:unified-interface} there is no partition $D = C \sqcup R$ such that $\supp(\varphi) \subseteq C$, the set $R$ is fixed pointwise, $|R| \ge 6$, and at most $5-k$ edges of $F$ join $C$ to $R$.
\end{enumerate}
\end{theorem}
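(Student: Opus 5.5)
Part~\ref{it:unified-face} is exactly \Cref{lem:support-not-face-localised}, so only part~\ref{it:unified-interface} needs new work. Suppose for contradiction that $D = C \sqcup R$ with $\supp(\varphi)\subseteq C$, $R$ fixed pointwise, $|R|\ge 6$, and at most $5-k$ edges between $C$ and $R$. The plan is to show that the cut separating $C\cup S_D$ from $R$ is a small cyclic edge cut, which cyclic $5$-edge-connectivity forbids.

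Let $W := C \cup S_D = V(F)\setminus R$. Every edge of $\partial R$ goes either to $C$ or to $S_D$. There are at most $5-k$ of the former, and at most $3k$ of the latter since $|S_D| = k$ and $F$ is cubic. This naive count gives $|\partial R| \le 5+2k$, which is too weak, so the bound must be sharpened using the anchor observation. I would instead use $C$ itself, or $C$ together with the deleted vertices adjacent to the support, as the small side.

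The key structural input is \Cref{lem:anchor}. A moved vertex $u\in C$ has at most one fixed neighbour, and that neighbour is common to $u$ and $\varphi(u)$. The aim is to show that $\supp(\varphi)$ together with its images and deleted neighbours forms a set $W'$ containing a cycle, and that $|\partial W'|$ is small.

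I would then argue by cases on $k$.
\begin{itemize}[leftmargin=2em]
\item For $k=1$, every moved vertex has at least two neighbours that are moved or deleted. The subgraph induced on $\supp(\varphi)\cup S_D$ therefore has minimum degree roughly $2$ away from the single deleted vertex, so it contains a cycle. Its edge boundary consists of edges to fixed vertices, at most one per moved vertex, and these lie in $C$ or across the interface. Counting shows that the boundary of the component of $F[C\cup S_D]$ containing the support has at most $(5-k)+3k - 2(\text{internal edges at } S_D)$ edges.
\item More cleanly, I would take $W$ to be the union of $C$ with those vertices of $S_D$ adjacent to $C$, and place the deleted vertices adjacent only to $R$ into the $R$ side. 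Since $|R|\ge 6$ and an IPR fullerene has girth $5$, the $R$ side must contain a cycle unless it is a small forest; a forest on at least $6$ vertices in a cubic graph already has a large boundary, which I would exclude by a count.
\end{itemize}
The target is an edge cut of size at most $5-k+$(contribution of $S_D$) that is cyclic and nontrivial. By \Cref{lem:distinct-exterior} and the fact that nontrivial cyclic cuts in IPR fullerenes have size at least $7$ (Kardoš–Škrekovski), this is a contradiction unless one side is a single face. In that case the support lies in one face, which part~\ref{it:unified-face} excludes, or else the cut is a trivial $5$- or $6$-cut around a face.

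The main obstacle is controlling the deleted vertices. Each vertex of $S_D$ can contribute up to three cut edges, so a naive bound gives a cut of size up to $5+2k\le 11$, which is not below $7$. I expect to handle this by a local swap argument: move each deleted vertex to whichever side it has at least two neighbours on, so that it adds at most one edge to the cut. This gives $|\partial| \le (5-k)+k = 5$. A cut of size at most $5$ in a cyclically $5$-edge-connected graph must then be trivial: one side is a pentagon or a single vertex. A single vertex is impossible because the support is nonempty and, by \Cref{lem:anchor}, drags along neighbours. So the small side would be a pentagon containing the support, contradicting \Cref{lem:support-not-face-localised}. Finally, I would check that the side containing $R$ is the large one using $|R|\ge 6$, so the small side is indeed the support side.
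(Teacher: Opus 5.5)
\textbf{A genuinely different route, with one fixable hole.} Your final paragraph is the actual argument, and it is not the paper's.

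\emph{The paper's route.} The paper keeps the deleted vertices on neither side and bounds $|\partial C|+|\partial R|\le 2c+e_C+e_R\le 10+k$ as in \eqref{eq:unified-budget}. It then splits into three cases according to whether $F[C]$ and $F[R]$ contain cycles. The case where $F[C]$ is a forest only gives $|C|\le 3+2k\le 9$ by \eqref{eq:size-bound}, and it is closed by the exhaustive computer search of \Cref{lem:acyclic-case}.

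\emph{Your route.} You place each deleted vertex on the side holding at least two of its neighbours. This yields a single cut $\partial X$, with $X=C\cup S^A\supseteq C$ and $Y=R\cup S^B\supseteq R$, of size at most $c+k\le 5$. That bound is strong enough to remove the computation altogether:
\begin{itemize}
\item $F[Y]$ contains a cycle, since a forest would give $|\partial Y|\ge |Y|+2\ge 8$.
\item If $F[X]$ contains a cycle, then $\partial X$ is a cyclic $5$-cut. It is trivial by Kardo\v{s}--\v{S}krekovski, which is where IPR enters.
\item Then $|Y|\ge 6$ forces $X=V(P)$ for a pentagonal face $P$. So $\supp(\varphi)\subseteq C\subseteq V(P)$ contradicts~\ref{it:unified-face}. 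This uses $\supp(\varphi)\subseteq C$ exactly as the paper's own Case~1 does.
\end{itemize}
Clause~\ref{it:unified-interface} thus becomes a corollary of clause~\ref{it:unified-face} plus the cut theorem, needing only triviality of cyclic $5$-cuts rather than $5$- and $6$-cuts. Your cut also explains the witness of \Cref{prop:girth5-witness}: there $\partial(C\cup\{s_1,s_2\})$ is precisely a nontrivial cyclic $5$-cut. What the paper's enumeration buys in exchange is a catalogue of locally consistent configurations, from which that witness was extracted. It is not needed for the theorem itself.

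\textbf{The gap.} As written, the step ``a cut of size at most $5$ in a cyclically $5$-edge-connected graph must be trivial: one side is a pentagon or a single vertex'' is false. Cyclic edge-connectivity says nothing about cuts with an acyclic side. If $F[X]$ is a forest with $t$ components, then $|\partial X|=|X|+2t\le 5$ allows $X$ to induce a single vertex, an edge, or a path on three vertices.

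You only exclude the single vertex, and the real reason there is simple: a nontrivial $\varphi$ moves some $x$ to $\varphi(x)\neq x$, so $|\supp(\varphi)|\ge 2$. \Cref{lem:anchor} is not what does it.

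The repair for the other two cases is one line. In a cubic plane graph any two edges at a vertex are consecutive in its rotation and so lie on a common face. Hence a path on at most three vertices lies on a face boundary, and $\supp(\varphi)\subseteq X\subseteq V(F^\circ)$ again contradicts~\ref{it:unified-face}.

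\textbf{Smaller points.}
\begin{itemize}
\item Triviality of the cyclic $5$-cut needs the IPR hypothesis via Kardo\v{s}--\v{S}krekovski, not cyclic $5$-edge-connectivity alone.
\item Deleted vertices may be adjacent to one another, so ``the side with two neighbours'' is not yet well defined. Choose $S^A\subseteq S_D$ minimising $|\partial(C\cup S^A)|$. At a minimum each deleted vertex has at most one neighbour across the cut, which gives the bound $c+k$.
\item The exploratory bullets before your last paragraph (the $k=1$ count and the $5+2k$ attempts) can be dropped.
\end{itemize}
With these changes your argument is a complete, computer-free proof of~\ref{it:unified-interface}.
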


Clause~\ref{it:unified-interface} assumes no invariance of $C$: the vertices of $C$ may be moved anywhere, including into $S_D$. Informally, deleting at most three vertices of an IPR fullerene never buys a symmetry that acts only behind a small edge interface. For $k = 0$ the corresponding statement is classical rigidity, since an automorphism of a $3$-connected plane graph fixing a face pointwise is the identity; \Cref{thm:unified-localisation} says that localisation does not become possible when up to three vertices are sacrificed. The proof is partly computer-assisted: clause~\ref{it:unified-face} and the cyclic cases of clause~\ref{it:unified-interface} are proved by hand, while the acyclic (forest) case of clause~\ref{it:unified-interface} is reduced to a finite enumeration that is discharged by the exhaustive verification of \Cref{lem:acyclic-case}; the enumeration, its independent checks, and the archived code are described in \S\ref{sec:computational-methodology}.

\begin{remark}[The two clauses are independent]\label{rem:unified-independence}
Neither clause implies the other. A face-localised $\varphi$ need not decompose as in~\ref{it:unified-interface} at all: if $\varphi$ rotates a hexagonal face $F^\circ$ by two positions then, by \Cref{lem:anchor}, no moved vertex has a fixed neighbour, so the smallest set containing its support is $V(F^\circ)$, whose interface to the fixed remainder already has $6-k > 5-k$ edges; conversely~\ref{it:unified-interface} excludes configurations whose support lies nowhere near a single face.
\end{remark}

\begin{remark}[The scale of clause~\ref{it:unified-interface}]\label{rem:unified-scale}
It is worth recording explicitly how large the excluded regions may be. The deleted set contributes at most $3k$ edges to the two boundaries, so~\eqref{eq:unified-budget} caps $|\partial C| + |\partial R|$ at $10 + k \le 13$; since every nontrivial cyclic edge cut of an IPR fullerene has at least seven edges, $\partial C$ and $\partial R$ cannot both be nontrivial cyclic cuts. This is what forces one of $C$, $R$ to be a face boundary or its complement when both sides induce cycles, and what confines the remaining case to $|C| \le 3+2k \le 9$ by~\eqref{eq:size-bound}. In words: the support of $\varphi$ can hide neither inside a single face nor inside a patch of at most nine vertices attached to the rest of the cage by at most $5-k$ edges. Whether the interface bound $5-k$ can be raised---ideally towards the cyclic edge-connectivity threshold, which would make the conclusion global in a literal sense---we do not know.
\end{remark}

\subsubsection*{Preliminaries for the proof}

Before proving \Cref{thm:unified-localisation} we record one lemma and collect the standard fullerene facts the argument uses. The lemma describes how rigidly a partial automorphism behaves at a vertex mapped outside its own domain; it is what lets us treat partial automorphisms with $\ran(\varphi) \neq \dom(\varphi)$, and so drop the hypothesis $\varphi(C) = C$.

\begin{observation}[Exits]\label{lem:exit}
Let $\varphi$ be a partial automorphism of a graph $F$, let $x \in \supp(\varphi)$ and let $e := \varphi(x)$ be an exit vertex. Then
\[
  N(e) \cap \ran(\varphi) \;=\; \varphi\bigl(N(x) \cap \dom(\varphi)\bigr).
\]
In particular, since $\fix(\varphi) \subseteq \ran(\varphi)$, the adjacencies of $e$ to all fixed vertices, to all images of moved vertices and to all other exit vertices are determined by those of $x$; the undetermined edges of $e$ can only join $e$ to entry vertices or to vertices of $S_D \setminus \ran(\varphi)$.
\end{observation}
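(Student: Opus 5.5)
The identity follows directly from the definition of a partial automorphism as an isomorphism between induced subgraphs, so the plan is a two-inclusion argument carried out entirely inside $\ran(\varphi)$. Write $D = \dom(\varphi)$. Since $e = \varphi(x)$ lies in $\ran(\varphi)$, the map $\varphi \colon F[D] \to F[\ran(\varphi)]$ is a graph isomorphism. Its restriction to the neighbours of $x$ inside $D$ is therefore a bijection onto the neighbours of $e$ inside $\ran(\varphi)$.

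For the inclusion $\supseteq$, take $y \in N(x) \cap D$. Then $xy$ is an edge of $F[D]$, so $\varphi(x)\varphi(y) = e\,\varphi(y)$ is an edge of $F[\ran(\varphi)]$. Hence $\varphi(y) \in N(e) \cap \ran(\varphi)$.

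For the inclusion $\subseteq$, take $z \in N(e) \cap \ran(\varphi)$ and write $z = \varphi(y)$ with $y \in D$. Since $ez$ is an edge of $F[\ran(\varphi)]$ and $\varphi^{-1}$ is an isomorphism of the induced subgraphs, $xy$ is an edge of $F$. So $y \in N(x) \cap D$.

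Together the two inclusions give the displayed identity. Note that being an exit vertex ($e \notin D$) is not needed for the identity itself. That hypothesis only matters for the interpretation: $e$ has no image under $\varphi$, so its neighbourhood is constrained only through $\ran(\varphi)$.

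For the ``in particular'' part, the identity fixes $N(e) \cap \ran(\varphi)$, so I only need to show that the listed vertex types lie in $\ran(\varphi)$.
\begin{itemize}
  \item Every fixed vertex $w$ satisfies $w = \varphi(w)$, so $w \in \ran(\varphi)$.
  \item Images of moved vertices lie in $\ran(\varphi)$ by definition.
  \item Exit vertices are, by definition, elements of $\ran(\varphi) \setminus D$.
\end{itemize}
So adjacency of $e$ to any vertex of these three kinds is determined by $N(x) \cap D$.

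The remaining neighbours of $e$ lie outside $\ran(\varphi)$, and I partition that complement according to membership in $D$. Vertices of $D \setminus \ran(\varphi)$ are exactly the entry vertices. Vertices outside both $D$ and $\ran(\varphi)$ are exactly $S_D \setminus \ran(\varphi)$. These are the only places the undetermined edges of $e$ can go.

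There is no real obstacle here. The only care needed is the bookkeeping that exits belong to the range, and that the complement of the range splits exactly into entries and $S_D \setminus \ran(\varphi)$.
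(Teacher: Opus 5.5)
Your proof is correct and is essentially the paper's argument. The paper also writes each range vertex as $\varphi(z)$ with $z \in \dom(\varphi)$ and reads off $e \sim \varphi(z) \iff x \sim z$ from $\varphi$ being an isomorphism $F[\dom(\varphi)] \to F[\ran(\varphi)]$, which is your two inclusions in one line; your extra bookkeeping for the ``in particular'' clause and your remark that $e \notin \dom(\varphi)$ is not needed for the identity are both accurate.
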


\begin{proof}
Every vertex of $\ran(\varphi)$ is of the form $\varphi(z)$ with $z \in \dom(\varphi)$, and $\varphi$ is an isomorphism of the induced subgraphs $F[\dom(\varphi)] \to F[\ran(\varphi)]$; hence $e \sim \varphi(z)$ if and only if $x \sim z$.
\end{proof}

We also use three standard facts about the short cycles of a fullerene; only the last of them uses the IPR hypothesis. First, \emph{every $5$-cycle and every $6$-cycle of a fullerene bounds a face}. Indeed, let $Z$ be a cycle of length $L \in \{5,6\}$. Girth $5$ makes $Z$ chordless, since a chord would split $Z$ into two cycles of lengths summing to $L+2 \le 8$, one of them of length at most $4$. Of the $L$ edges leaving $Z$, say $t$ go to one side of $Z$ and $L-t$ to the other; if $t = 0$ or $t = L$ then, $Z$ being chordless, the corresponding side contains no vertex and $Z$ bounds a face. Otherwise let $X$ be the vertex set of the side receiving $m := \min(t, L-t) \le \lfloor L/2 \rfloor \le 3$ edges, so that $X \ne \emptyset$ and $|\partial X| = m \le 3$. If $F[X]$ contains a cycle then $\partial X$ is a cyclic edge cut of size at most $3$, contradicting cyclic $5$-edge-connectivity; and if $F[X]$ is a forest with $t'$ components then $|\partial X| = 3|X| - 2(|X|-t') = |X| + 2t' \ge 3$, forcing $m = 3$, hence $L = 6$, $|X| = 1$ and $t' = 1$, so that $X$ is a single vertex adjacent to three vertices of the $6$-cycle $Z$; but among any three vertices of a $6$-cycle two are at cycle-distance at most $2$, and together with that vertex they close a triangle or a $4$-cycle, again contradicting girth $5$. Second, since in a cubic plane graph the three faces incident with a vertex pairwise share an edge, two faces meeting in a vertex meet in an edge. Third, distinct pentagons being non-adjacent in an IPR fullerene, and distinct $5$-cycles bounding distinct pentagonal faces by the first fact, distinct $5$-cycles of $F$ are vertex-disjoint.

\subsubsection*{Proof of \Cref{thm:unified-localisation}}

\emph{Clause~\ref{it:unified-face}} is \Cref{lem:support-not-face-localised} applied with the deficiency $k \le 3$. (That lemma uses only the girth of $F$, \Cref{lem:distinct-exterior} and $|S_D| \le 3$.)

\emph{Clause~\ref{it:unified-interface}.} Suppose such a partition $D = C \sqcup R$ exists. Write $c \le 5-k$ for the number of $C$--$R$ edges and let $e_C$ and $e_R$ be the numbers of edges joining $S_D$ to $C$ and to $R$, so that $e_C + e_R \le 3k$. Since $V(F) = C \sqcup R \sqcup S_D$, the edge boundaries satisfy $|\partial C| = c + e_C$ and $|\partial R| = c + e_R$, whence
\begin{equation}\label{eq:unified-budget}
  |\partial C| + |\partial R| \;=\; 2c + e_C + e_R \;\le\; 2(5-k) + 3k \;=\; 10 + k \;\le\; 13 \;<\; 7 + 7 .
\end{equation}
We distinguish three cases according to whether $F[C]$ and $F[R]$ contain cycles.

\smallskip
\emph{Case 1: both $F[C]$ and $F[R]$ contain a cycle} (\Cref{fig:edge-cut-budget}). Then $\partial C$ and $\partial R$ are cyclic edge cuts, and by~\eqref{eq:unified-budget} at least one of them has size at most~$6$; being a cyclic edge cut of a fullerene it has size at least~$5$, hence size in $\{5,6\}$, and by~\cite{cyclic_edge_cuts} it is trivial. Deleting it leaves exactly two components, the vertex set $V(P)$ of a face $P$ with $|V(P)| \in \{5,6\}$ and its complement, and the corresponding side ($C$ or $R$) equals one of them. We rule out the four possibilities.
\begin{itemize}
  \item $C = V(P)$: then $\supp(\varphi) \subseteq C = V(P)$ is confined to a single face, contradicting~\ref{it:unified-face}.
  \item $C = V(F) \setminus V(P)$: then $R \cup S_D = V(P)$. As $F[V(P)]$ is a chordless cycle, its only subset inducing a cycle is $V(P)$ itself, so $R = V(P)$ and $S_D = \emptyset$, contradicting $k \ge 1$.
  \item $R = V(P)$: then $|R| \le 6$, so $|R| = 6$ and $P$ is a hexagon. By \Cref{lem:distinct-exterior} the six exterior edges of $P$ end at six pairwise distinct vertices of $C \cup S_D$. Those ending in $C$ are among the $c$ edges joining $C$ to $R$, and those ending in $S_D$ number at most $k$; hence $6 \le c + k \le (5-k) + k = 5$, a contradiction.
  \item $R = V(F) \setminus V(P)$: then $C \cup S_D = V(P)$, so $\supp(\varphi) \subseteq C \subseteq V(P)$, again contradicting~\ref{it:unified-face}.
\end{itemize}

\begin{figure}[!ht]
\centering
\begin{tikzpicture}[scale=0.95, lbl/.style={font=\small}, sd/.style={rectangle, draw=black, fill=gray!30, inner sep=2.2pt}]
  \draw (-2.2,0) ellipse (1.5 and 1.0);
  \draw (2.2,0) ellipse (1.5 and 1.0);
  \node[lbl] at (-2.2,-0.05) {$C$: moved};
  \node[lbl] at (2.2,0.05) {$R$: fixed pointwise};
  \draw (-0.75,0.35)--(0.75,0.35);
  \draw (-0.75,-0.35)--(0.75,-0.35);
  \node[lbl] at (0,0) {$c \le 5-k$};
  \coordinate (s1) at (-0.9,2.0); \coordinate (s2) at (0,2.0); \coordinate (s3) at (0.9,2.0);
  \foreach \p in {s1,s2,s3} \node[sd] at (\p) {};
  \node[lbl, above] at (0,2.15) {$S_D$, $|S_D| = k \le 3$};
  \draw[dashed] (s1)--(-1.9,0.95);
  \draw[dashed] (s2)--(-1.4,0.95);
  \draw[dashed] (s2)--(1.4,0.95);
  \draw[dashed] (s3)--(1.9,0.95);
  \node[lbl] at (-1.9,1.6) {$e_C$};
  \node[lbl] at (1.9,1.6) {$e_R$};
  \node[lbl] at (0,-1.5) {$|\partial C| + |\partial R| = 2c + e_C + e_R \le 2(5-k) + 3k = 10+k \le 13 < 7 + 7$};
\end{tikzpicture}
\caption{The counting~\eqref{eq:unified-budget} in Case~1 of the proof of \Cref{thm:unified-localisation}\ref{it:unified-interface}: the two edge cuts $\partial C$ and $\partial R$ cannot both be cyclic edge cuts of size at least~$7$, so the smaller has size in $\{5,6\}$ and is trivial (\cite{cyclic_edge_cuts}); then one of $C$, $R$ is a face boundary or its complement, and each of the four possibilities is ruled out. The dashed edges are those joining the deleted set $S_D$ to $C$ and to $R$, at most $3k$ in all.}
\label{fig:edge-cut-budget}
\end{figure}

\smallskip
\emph{Case 2: $F[C]$ contains a cycle and $F[R]$ is a forest.} If $X \subseteq V(F)$ induces a forest in the cubic graph $F$, then $|\partial X| = 3|X| - 2\,|E(F[X])| \ge 3|X| - 2(|X|-1) = |X| + 2$. Thus $|\partial R| \ge |R| + 2 \ge 8$, and~\eqref{eq:unified-budget} gives $|\partial C| \le (10+k) - 8 = k + 2 \le 5$. If $R \cup S_D$ induced a forest, the same bound would give $|\partial(R \cup S_D)| \ge |R| + k + 2 \ge 9$, while $\partial(R \cup S_D) = \partial C$ has at most five edges; hence $F[R \cup S_D]$ contains a cycle, both sides of $\partial C$ contain cycles, and $\partial C$ is a cyclic edge cut of size exactly~$5$, therefore trivial. Either $C = V(P)$, contradicting~\ref{it:unified-face}, or $R \cup S_D = V(P)$, which is impossible because $|R \cup S_D| \ge 6 + k \ge 7 > 6 \ge |V(P)|$.

\smallskip
\emph{Case 3: $F[C]$ is a forest.} Let $T := F[C]$ with $t$ components. Counting the degrees of the vertices of $C$ in the cubic graph $F$,
\begin{equation}\label{eq:size-bound}
  3|C| \;=\; 2\bigl(|C| - t\bigr) + c + e_C,
  \qquad\text{hence}\qquad
  |C| + 2t \;=\; c + e_C \;\le\; (5-k) + 3k \;=\; 5 + 2k ,
\end{equation}
so that $|C| \le 3 + 2k \le 9$. The set $C$ is therefore bounded in size, and the case reduces to a finite verification, carried out in \Cref{lem:acyclic-case} below. This completes the proof. \qed

\subsubsection*{The finite case: admissible configurations}

The following notation records all the information that a configuration as in Case~3 imposes on $F$. Write $T := F[C]$ for the induced forest, $M := \supp(\varphi) \subseteq C$ for the moved vertices, $W := C \setminus M$ for the vertices of $C$ fixed by $\varphi$, and $\psi := \varphi|_M$; denote the deleted vertices by $s_1, \dots, s_k$. The finite object
\[
  \Gamma \;=\; \bigl(T,\; M,\; \psi,\; (N_C(\rho))_{\rho \in R},\; (N(v) \cap S_D)_{v \in C}\bigr)
\]
is called a \emph{configuration record}. Its \emph{recorded subgraph} $K$ has vertex set $C \cup \{\rho \in R : N_C(\rho) \neq \emptyset\} \cup \{s_1, \dots, s_k\}$ and contains every edge listed in $\Gamma$ together with every edge forced by~\ref{A3} and~\ref{A4} below. Each edge of $K$ is an edge of $F$, but $K$ need not contain all edges of $F$ between its vertices; accordingly every condition below refers only to the \emph{recorded} edges, so that an edge of $F$ absent from $K$ can never invalidate a rejection.

The next list is what the finite verification checks. Conditions \ref{A1}, \ref{A2} and \ref{A5} are requirements the record must satisfy; \ref{A3} and \ref{A4} instead \emph{force} further edges into $K$ (a record is discarded if a forced edge is impossible). Each item is an immediate consequence of the definition of a partial automorphism together with one of the facts established above, as indicated on the right. Although $F[C]$ is a forest, the recorded subgraph $K$ is typically not acyclic: the deleted vertices $s_j$ and the interface vertices $\rho$ close up cycles running through $C$ (for instance, two vertices of $C$ with a common neighbour in $S_D$, together with the $T$-path between them). Condition~\ref{A5} is the only place where the pentagon/hexagon face structure of $F$ enters, and it is exactly what those cycles must respect; it is what makes the enumeration terminate in a contradiction, the girth and degree conditions~\ref{A1}--\ref{A4} alone being insufficient.

\begin{enumerate}[label=\textup{(A\arabic*)},leftmargin=*]
  \item\label{A1} \emph{Degrees and budgets.} $T$ is a forest with $\Delta(T) \le 3$; every $v \in C$ satisfies $\deg_T(v) + |N(v) \cap R| + |N(v) \cap S_D| = 3$; moreover $\sum_{\rho \in R} |N_C(\rho)| = c \le 5-k$, $|C| + 2t \le 5+2k$, and each $s_j$ is incident with at most three edges of $K$. \hfill($F$ cubic, $V(F) = C \sqcup R \sqcup S_D$;~\eqref{eq:size-bound})
  \item\label{A2} \emph{The map.} $\psi : M \to M \cup S_D$ is injective and fixed-point-free, and sends no vertex of $M$ into $W$. For $x, y \in M$ with $\psi(x), \psi(y) \in M$ we have $x \sim y \iff \psi(x) \sim \psi(y)$, and $N_W(x) = N_W(\psi(x))$. \hfill(injectivity; $\varphi$ preserves adjacency on $D$; \Cref{lem:anchor})
  \item\label{A3} \emph{Forced edges at exits and at the interface.} If $\psi(x) = e \in S_D$, then for every $y \in C \setminus \{x\}$ the pair $\{e, \varphi(y)\}$ is an edge of $F$ exactly when $x \sim y$, where $\varphi(y)$---the image of $y$---equals $\psi(y)$ if $y$ is moved and $y$ itself if $y$ is fixed; and every $\rho \in R$ with $N_C(\rho) \neq \emptyset$ satisfies $\rho \sim x \iff \rho \sim \psi(x)$ for all $x \in M$. Edges so determined are added to $K$ and the corresponding non-edges are recorded as forbidden. \hfill(\Cref{lem:exit}; $\rho$ is fixed)
  \item\label{A4} \emph{Exit completion.} Let $e = \psi(x) \in S_D$ be an exit vertex. By \Cref{lem:exit} the neighbours of $e$ in $\ran(\varphi)$ are exactly the images of the neighbours of $x$, all already recorded, so any \emph{undetermined} neighbour of $e$ lies outside $\ran(\varphi)$, that is, is an entry vertex or a vertex of $S_D \setminus \ran(\varphi)$. Entry vertices lie in $C$, and every vertex of $C$ already has all three of its edges recorded by~\ref{A1}; hence no entry vertex can be an undetermined neighbour of $e$. Thus all $3 - \deg_K(e)$ undetermined edges of $e$ join it to $S_D \setminus \ran(\varphi)$: the record is discarded unless $3 - \deg_K(e) \le |S_D \setminus \ran(\varphi)|$, and when equality holds each vertex of $S_D \setminus \ran(\varphi)$ must receive one of them, so these edges are forced and added to $K$. The completion is iterated until no further edge is forced. \hfill(\Cref{lem:exit})
  \item\label{A5} \emph{Girth and faces.} $K$ contains no cycle of length $3$ or $4$; distinct $5$-cycles of $K$ are vertex-disjoint; every $6$-cycle of $K$ all of whose chords are certifiably absent---a chord being certifiably absent when one of its endpoints has recorded degree~$3$---bounds a hexagonal face; two recognised faces meet in $\emptyset$ or in exactly one edge with its endpoints; no vertex lies on more than three and no edge on more than two recognised faces. \hfill(girth~$5$; the standard face facts above; \Cref{lem:three-faces})
\end{enumerate}

\begin{lemma}[Computational]\label{lem:acyclic-case}
For $k \in \{1,2,3\}$ and $c \le 5-k$ no configuration record satisfies \ref{A1}--\ref{A5}. Consequently Case~3 of the proof of \Cref{thm:unified-localisation}\ref{it:unified-interface} cannot occur.
\end{lemma}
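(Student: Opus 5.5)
The plan is an exhaustive, canonically pruned enumeration of configuration records, with the conditions \ref{A1}--\ref{A5} applied as early as possible.

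\emph{Step 1: finiteness.} The enumeration is finite. By \eqref{eq:size-bound}, $|C| + 2t \le 5+2k \le 11$, so $|C| \le 9$ and $T$ is a forest on at most nine vertices with maximum degree at most $3$. The interface has $c \le 5-k$ edges. Each deleted vertex $s_j$ has at most three recorded edges.

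\emph{Step 2: generation order.} For each $k \in \{1,2,3\}$ I would generate records in the following order.
\begin{enumerate}
  \item The forest $T$, up to isomorphism, using \texttt{nauty}/\texttt{geng}-style generation of forests with $|C| + 2t \le 5+2k$.
  \item The distribution of the remaining $3-\deg_T(v)$ half-edges at each $v\in C$ between $R$ and $S_D$, subject to the budgets in \ref{A1}.
  \item The grouping of the $R$-half-edges into interface vertices $\rho$. Each $\rho$ receives at most three of them, and girth already forbids a $\rho$ with two neighbours at $T$-distance at most $2$.
  \item The incidences of the $S_D$-half-edges with $s_1,\dots,s_k$.
  \item Finally, the map: the support $M \subseteq C$ and an injective, fixed-point-free $\psi : M \to M \cup S_D$.
\end{enumerate}

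\emph{Step 3: pruning and closure.} Condition \ref{A2} is checked immediately once $\psi$ is chosen: adjacency preservation on $M$, and $N_W(x) = N_W(\psi(x))$, which by \Cref{lem:anchor} forces each moved vertex to have at most one fixed neighbour. Next, the forced edges of \ref{A3} are added, and \ref{A4} is iterated to a fixed point. Contradictions are recorded as discards: a forced edge that would raise a degree above $3$, or that coincides with a forbidden non-edge. The last filter is \ref{A5}: enumerate the short cycles of $K$ and reject on
\begin{itemize}
  \item a $3$- or $4$-cycle;
  \item two intersecting $5$-cycles (the IPR input);
  \item violations of the face-intersection rules of \Cref{lem:three-faces} among recognised pentagons and hexagons.
\end{itemize}
A record that survives every step would be a counterexample, so the claim is that the output is empty. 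To keep the run feasible I would prune symmetric duplicates via canonical labelling of the partially built $K$, with the action on $M$ and $\psi$ included as colours.

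\emph{Main obstacle: correctness of the rejections.} The expected difficulty is soundness rather than running time. Every rejection must use only recorded edges, since $K$ need not be induced in $F$, so each check has to be stated monotonically in added edges. The hexagon-recognition clause needs particular care: a $6$-cycle counts as a face only when all its chords are certifiably absent. The exit-completion step \ref{A4} also needs care, because it is the only step that invents edges among the $s_j$. If it fires with strict inequality, I would branch over the possible targets rather than force them.

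\emph{Validation.} I would cross-validate in two ways. First, run an independent brute-force implementation that does not use \ref{A4}. Second, apply a sanity test: drop \ref{A5}'s IPR clause and confirm that surviving records appear, exhibiting genuine localised maps in non-IPR fullerenes or in girth-$5$ cubic graphs (cf.\ \Cref{prop:girth5-witness}). This shows the filters are not vacuously rejecting everything.
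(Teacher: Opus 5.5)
Your plan is the paper's proof. It is an exhaustive enumeration, up to isomorphism, of forests $T$ with $|C|+2t\le 5+2k$, supports $M$, maps $\psi$ and interface/$S_D$ attachments, filtered by \ref{A1}--\ref{A3}, closed under \ref{A4} and rejected by \ref{A5}, which the paper reports leaves no surviving record; your different generation order and your branching in \ref{A4} are harmless variations.

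One caution on your sanity check. The configuration of \Cref{prop:girth5-witness} is already rejected by face conditions that hold in every fullerene (two $5$-cycles sharing two edges), so dropping only the IPR clause need not produce survivors, and an empty output would not indicate a bug. To see survivors you must drop all face conditions of \ref{A5} and keep only girth. Survivors then witness girth-$5$ cubic graphs, not non-IPR fullerenes, whose status the paper leaves open (\Cref{q:sharpness}).
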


\begin{proof}[Proof (exhaustive verification)]
By~\eqref{eq:size-bound} it suffices to enumerate forests $T$ on at most $3+2k \le 9$ vertices with $\Delta(T) \le 3$ and $|C| + 2t \le 5+2k$ (obtained from the subcubic trees generated by \texttt{gentreeg}), and, for each, all sets $M$, all maps $\psi$, all interface patterns and all $S_D$-attachments compatible with \ref{A1}--\ref{A3}; the forced-edge closure~\ref{A4} is then applied and the conditions~\ref{A5} evaluated. All conditions are invariant under relabelling of $C$, of $R$ and of $S_D$, so enumeration up to isomorphism is sufficient. The run for deficiency $k$ subsumes all smaller deficiencies, since unused deleted vertices are simply never referenced.

The enumeration examines $6\,623$ pairs $(T, M)$, $917\,415$ maps $\psi$ and $12\,005\,412$ attachment patterns, yielding $12\,183$ complete configuration records. Of these, $1\,266$ are eliminated by the exit completion~\ref{A4} and $2\,829$ by the girth condition in~\ref{A5}; each of the remaining $8\,088$ records is eliminated by the face conditions in~\ref{A5}. No configuration record survives. Details of the implementation and of the independent checks performed on it are given in \S\ref{sec:computational-methodology}.
\end{proof}

\subsubsection*{Sharpness: what the hypotheses are really doing}

The configuration records that survive every condition except the face conditions of~\ref{A5} are locally consistent subgraphs of cubic girth-$5$ graphs. The following proposition shows that they are not artefacts of the method: one of them extends to a genuine graph, so \Cref{thm:unified-localisation} is false for cubic, $3$-connected graphs of girth~$5$ in general.

\begin{proposition}\label{prop:girth5-witness}
There exists an asymmetric cubic $3$-connected graph $G$ of girth~$5$ on $20$ vertices admitting a nontrivial partial automorphism $\varphi$ of deficiency~$3$ such that $\supp(\varphi)$ has four vertices and $\dom(\varphi)$ splits into a set $C$ with $|C| = 5$ containing $\supp(\varphi)$ and a pointwise-fixed remainder $R$ with $|R| = 12$ joined to $C$ by exactly two edges. In particular the conclusion of \Cref{thm:unified-localisation}\ref{it:unified-interface} fails for $G$.
\end{proposition}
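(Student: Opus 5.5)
The statement is existential, so I will prove it by exhibiting an explicit graph $G$ and map $\varphi$ and checking every property by hand or by a short certified computation. The construction is guided by the configuration records that survive conditions \ref{A1}--\ref{A4} and the girth part of \ref{A5} but fail the face conditions. The parameters suggest the shape: $k=3$, $c=2\le 5-k$, $|C|=5$, and \eqref{eq:size-bound} gives $|C|+2t=c+e_C\le 2+9$. So I take $C$ to induce a tree ($t=1$), which forces $e_C=5$. I will choose a surviving record with $|M|=4$, one vertex $w\in W=C\setminus M$, and $\psi$ sending some moved vertices onto deleted vertices (exits).

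\textbf{Step 1: fix the local gadget.} Concretely, take $T$ to be a path or a subcubic tree on five vertices, with the fixed vertex $w$ chosen so that the two $C$--$R$ edges are incident only with fixed vertices or are reproduced via \ref{A3}. Choose $\psi$ on the four moved vertices so that the exit relations of \Cref{lem:exit} are consistent. This gives the recorded subgraph $K$ on $C\cup S_D$ plus the two interface vertices of $R$. The deleted vertices $s_1,s_2,s_3$ will have some edges still open, and these are allowed to go to $R$.

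\textbf{Step 2: complete $K$ to a $20$-vertex cubic graph.} I will add $R$ with $|R|=12$. Only two edges join $R$ to $C$, and the remaining free valences of $S_D$ are attached to $R$. The graph $G[R]$ is then chosen as a cubic-minus-some-edges girth-$5$ graph, for instance a suitably modified piece of the Petersen or dodecahedron structure, so that all degrees become $3$. Since every vertex of $R$ is fixed, $\varphi$ is determined: it is $\psi$ on $M$ and the identity on $W\cup R$. It is a partial automorphism provided that adjacency is preserved on $D=C\cup R$. Adjacency within $C\cup$ images is guaranteed by \ref{A2}--\ref{A3}, and $R$-adjacency of moved vertices is vacuous, because the two $C$--$R$ edges attach at fixed vertices of $C$ or match under \ref{A3}. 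The one subtle point is that the images of moved vertices lying in $S_D$ must have their edges into $R$ matching those of their preimages. \Cref{lem:exit} handles this: the preimages have no $R$-neighbours unless they are mirrored, so the exit vertices may have $R$-neighbours freely, since those edges are not in the range-induced comparison only when… To avoid error here, I will verify the isomorphism $G[D]\to G[\varphi(D)]$ directly by edge lists.

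\textbf{Step 3: verify the global properties.} I need to check that $G$ is cubic, has girth $5$ (no $3$- or $4$-cycles), is $3$-connected, and is asymmetric. The last two are the main obstacle, since a hand-built completion easily acquires a symmetry or a $2$-cut. I would search over completions of $R$ by computer with \texttt{nauty}: test $|\Aut(G)|=1$, run a connectivity check, and compute girth. I would then state the resulting graph by an explicit edge list (or graph6 string), so the reader can recheck every property, including the partition $D=C\sqcup R$ with exactly two $C$--$R$ edges and $|\supp(\varphi)|=4$. Finally I will note that $G$ is necessarily nonplanar: by \Cref{thm:unified-localisation}, a planar such graph would be excluded, and this can also be seen directly since a cubic planar girth-$5$ graph on $20$ vertices is the dodecahedron, which is symmetric.
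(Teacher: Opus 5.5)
Your proposal never produces the witness, and it commits the search to a design that cannot satisfy the statement. You choose $\psi$ to send some moved vertices onto deleted vertices (exits). Under the paper's symmetric convention, an exit $e=\varphi(x)\in S_D$ has $\varphi^{-1}(e)=x\neq e$, so $e\in\supp(\varphi)$. Hence $|\supp(\varphi)|\ge |M|+1=5$ and $\supp(\varphi)\not\subseteq\dom(\varphi)\supseteq C$. Both contradict required properties, and the example would also fail the hypothesis $\supp(\varphi)\subseteq C$ of \Cref{thm:unified-localisation}\ref{it:unified-interface} that it is meant to meet. Step 2 also breaks exactly where your sentence trails off. Since $R\subseteq\ran(\varphi)$ is fixed pointwise, \Cref{lem:exit} gives $N(e)\cap R=N(x)\cap R$. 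So the free valences of an exit vertex cannot be attached to $R$ freely; they are forced into $S_D\setminus\ran(\varphi)$, which is condition \ref{A4}.

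The missing idea is to take $\dom(\varphi)=\ran(\varphi)$, i.e.\ $\psi$ a permutation of $M$ with no exits. The paper uses the path $x\,u\,w\,v\,y$ with the reversal $(x\,y)(u\,v)$, a single fixed vertex $\rho$ adjacent to both $x$ and $y$, and $s_1\sim u,y$, $s_2\sim v,x$, $s_3\sim w$. This saturates every vertex of $C$, so $G[C]$ is the path, $c=2$ and $e_C=5$. The deleted vertices lie outside both domain and range, so their remaining edges impose no constraint. The reversal preserves the path, and the only $C$--$R$ edges $\rho x,\rho y$ are interchanged. Hence $\varphi$ is a partial automorphism of \emph{every} cubic graph containing this gadget. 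That decoupling reduces the proof to exhibiting one explicit $20$-vertex completion (the appendix graph) and checking that it is cubic, $3$-connected, of girth~$5$ and asymmetric. Your proposal leaves this step as a search with no result, and for an existence statement that is the whole proof.

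A minor point on nonplanarity. Your first justification is invalid, since \Cref{thm:unified-localisation} concerns IPR fullerenes and the dodecahedron is not IPR. Your Euler-count argument is correct: a $3$-connected planar cubic girth-$5$ graph on $20$ vertices is the dodecahedron, which is symmetric. Nonplanarity is not part of the statement anyway.
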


\begin{proof}
Take the configuration of \Cref{fig:girth5-witness}: a path $x\,u\,w\,v\,y$ carrying the reversal $\psi = (x\,y)(u\,v)$, a fixed vertex $\rho$ adjacent to $x$ and to $y$, and deleted vertices $s_1 \sim u,y$, $s_2 \sim v,x$ and $s_3 \sim w$. The map $\varphi$ that acts as $\psi$ on $\{x,u,w,v,y\}$ and fixes every vertex outside $\{s_1,s_2,s_3\}$ is a partial automorphism of any cubic graph containing this configuration: adjacency and non-adjacency inside the path are preserved by the reversal, and the only edges from the path to the fixed remainder are $\rho x$ and $\rho y$, which the reversal interchanges. An explicit completion to an asymmetric cubic $3$-connected graph of girth~$5$ on $20$ vertices is given in \S\ref{sec:computational-methodology}. Since the graph is asymmetric, $\varphi$ is not the restriction of any automorphism.
\end{proof}

The configuration of \Cref{prop:girth5-witness} cannot occur in any fullerene, IPR or not. In it the two $5$-cycles $x\,u\,w\,v\,s_2$ and $u\,w\,v\,y\,s_1$ share the two edges $uw$ and $wv$. If both bounded faces, these would be two faces meeting in two edges, which is impossible in fullerenes; so at least one of them bounds no face, that is, is a \emph{separating pentagon}. But in a fullerene every $5$-cycle bounds a face, so no separating pentagon exists. Hence \emph{this particular} configuration is excluded in every fullerene, IPR or not, and the graph of \Cref{prop:girth5-witness} has no fullerene counterpart. This does not by itself settle clause~\ref{it:unified-interface} for non-IPR fullerenes; see \Cref{q:sharpness}.

Whether \Cref{thm:unified-localisation} extends to non-IPR fullerenes we do not know. As recorded in the preliminaries to the proof, the facts that every $5$-cycle and every $6$-cycle bounds a face hold in \emph{every} fullerene, so the IPR hypothesis enters in exactly one place: Cases~1 and~2 of clause~\ref{it:unified-interface} use that every cyclic $5$- or $6$-edge cut of $F$ is trivial, which by~\cite{cyclic_edge_cuts} fails precisely when two pentagons are adjacent. Removing the hypothesis therefore reduces to analysing the nontrivial cyclic $5$- and $6$-edge cuts described by the Kardo\v{s}--\v{S}krekovski characterisation.

\begin{question}\label{q:sharpness}
Does the conclusion of \Cref{thm:unified-localisation} hold for all fullerenes, not only the IPR ones? More generally, does a localised partial automorphism as in \ref{it:unified-interface} exist in any $3$-connected planar cubic graph of girth~$5$?
\end{question}

\begin{figure}[!ht]
\centering
\begin{tikzpicture}[scale=1.05,
  mv/.style={circle, fill=black, inner sep=1.7pt},
  fx/.style={circle, draw=black, fill=white, inner sep=1.5pt},
  sd/.style={rectangle, draw=black, fill=gray!30, inner sep=2.2pt},
  lbl/.style={font=\small}]
  \coordinate (x) at (-2.4,0);
  \coordinate (u) at (-1.2,0);
  \coordinate (w) at (0,0);
  \coordinate (v) at (1.2,0);
  \coordinate (y) at (2.4,0);
  \coordinate (rho) at (0,1.7);
  \coordinate (s3) at (0,-0.7);
  \coordinate (s2) at (-2.0,-2.3);
  \coordinate (s1) at (2.0,-2.3);
  \draw[line width=3.4pt, gray!25] (x)--(u)--(w)--(v);
  \draw[line width=3.4pt, gray!25] (x) to[bend right=25] (s2);
  \draw[line width=3.4pt, gray!25] (s2) to[bend right=32] (v);
  \draw[line width=3.4pt, gray!50] (u)--(w)--(v)--(y);
  \draw[line width=3.4pt, gray!50] (y) to[bend left=25] (s1);
  \draw[line width=3.4pt, gray!50] (s1) to[bend left=32] (u);
  \draw (x)--(u)--(w)--(v)--(y);
  \draw (x) to[bend left=30] (rho);
  \draw (y) to[bend right=30] (rho);
  \draw (w)--(s3);
  \draw (x) to[bend right=25] (s2);
  \draw (s2) to[bend right=32] (v);
  \draw (y) to[bend left=25] (s1);
  \draw (s1) to[bend left=32] (u);
  \draw[dotted] (rho)--++(0,0.55);
  \draw[dotted] (s3)--++(-0.45,-0.45);
  \draw[dotted] (s3)--++(0.45,-0.45);
  \draw[dotted] (s2)--++(-0.55,-0.35);
  \draw[dotted] (s1)--++(0.55,-0.35);
  \node[mv] at (x) {}; \node[mv] at (u) {};
  \node[fx] at (w) {}; \node[mv] at (v) {}; \node[mv] at (y) {};
  \node[fx] at (rho) {};
  \node[sd] at (s1) {}; \node[sd] at (s2) {}; \node[sd] at (s3) {};
  \node[lbl] at ($(x)+(-0.28,0.02)$) {$x$};
  \node[lbl] at ($(u)+(-0.10,0.30)$) {$u$};
  \node[lbl] at ($(w)+(0.00,0.32)$) {$w$};
  \node[lbl] at ($(v)+(0.10,0.30)$) {$v$};
  \node[lbl] at ($(y)+(0.28,0.02)$) {$y$};
  \node[lbl] at ($(rho)+(0.26,0.10)$) {$\rho$};
  \node[lbl] at ($(s3)+(0.30,0.10)$) {$s_3$};
  \node[lbl] at ($(s2)+(-0.30,0.08)$) {$s_2$};
  \node[lbl] at ($(s1)+(0.30,0.08)$) {$s_1$};
\end{tikzpicture}
\caption{The configuration of \Cref{prop:girth5-witness}: a nontrivial partial automorphism of deficiency~$3$ localised behind a two-edge interface, possible in a cubic $3$-connected graph of girth~$5$ but not in any fullerene. Black vertices are moved by the reversal $\psi = (x\,y)(u\,v)$, white vertices are fixed, squares are the deleted vertices $S_D$; dotted stubs lead to the pointwise-fixed remainder $R$, which meets $C = \{x,u,w,v,y\}$ only in the two edges $\rho x$ and $\rho y$. The two shaded $5$-cycles $x\,u\,w\,v\,s_2$ and $u\,w\,v\,y\,s_1$ share the two edges $uw$ and $wv$; in a fullerene both would bound pentagonal faces, which is impossible. Crossings in the drawing are artefacts of the layout.}
\label{fig:girth5-witness}
\end{figure}

\subsubsection*{Consequences}

\begin{corollary}[Decomposable form]\label{cor:unified-decomposable}
Let $F$ and $\varphi$ be as in \Cref{thm:unified-localisation}. Then $\varphi$ does not decompose as $\varphi = \alpha \cup \id_R$, where $\alpha$ is a nontrivial automorphism of an induced subgraph $F[C]$ with $C \subset \dom(\varphi)$, the remainder $R = \dom(\varphi) \setminus C$ is fixed pointwise with $|R| \ge 6$, and at most $5-k$ edges of $F$ join $C$ to $R$.\end{corollary}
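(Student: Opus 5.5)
The plan is to reduce the corollary directly to clause~\ref{it:unified-interface} of \Cref{thm:unified-localisation} by contradiction. Assume $\varphi = \alpha \cup \id_R$ with $\alpha$ a nontrivial automorphism of $F[C]$, $C \subset \dom(\varphi)$, $R = \dom(\varphi)\setminus C$ fixed pointwise, $|R|\ge 6$, and at most $5-k$ edges between $C$ and $R$. The goal is then to show that $D = C \sqcup R$ is a partition of exactly the type the theorem forbids.

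I would verify the hypotheses of clause~\ref{it:unified-interface} one at a time. First, $D = \dom(\varphi) = C \sqcup R$ is a partition by construction, since $R$ is defined as the complement of $C$ in $\dom(\varphi)$. Second, $\supp(\varphi)\subseteq C$. Every vertex of $R$ is fixed, and $\alpha$ maps $C$ onto $C$, so every moved vertex lies in $C$ and its image lies in $C$. With the symmetric convention for the support, the only thing to check is that no vertex of $\ran(\varphi)$ outside $C$ has a preimage different from itself. This holds because $\ran(\varphi) = \alpha(C)\cup R = C \cup R$, and $\varphi^{-1}$ fixes $R$ pointwise. Third, $R$ is fixed pointwise, $|R|\ge 6$, and at most $5-k$ edges join $C$ to $R$; all three are direct hypotheses.

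Finally, I would check that $\varphi$ is a nontrivial partial automorphism of deficiency $k\in\{1,2,3\}$. Nontriviality holds because $\alpha$ is nontrivial, and the deficiency is inherited from the standing assumption ``$F$ and $\varphi$ as in \Cref{thm:unified-localisation}''. Clause~\ref{it:unified-interface} then forbids such a partition, which is the desired contradiction.

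I do not expect a real obstacle. The only delicate point is bookkeeping: the symmetric definition of $\supp$ includes range vertices, so one must confirm that, because $\alpha$ is an automorphism of $F[C]$ and not merely a map into $S_D$, there are no exit vertices and the support stays inside $C$. It is also worth noting that the case $\varphi$ decomposable is strictly a special case of the theorem, since that clause needs no invariance of $C$.
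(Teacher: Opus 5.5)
Your proposal is correct and matches the paper's proof. Both observe that a decomposition $\varphi = \alpha \cup \id_R$ satisfies $\supp(\varphi) \subseteq C$ and $\varphi(C) = C$, so there are no exit vertices, and it is therefore a special case of the partition excluded by \Cref{thm:unified-localisation}\ref{it:unified-interface}.
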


\begin{proof}
Such a decomposition satisfies $\supp(\varphi) \subseteq C$ and $\varphi(C) = C$, hence is a special case of the partition excluded by \Cref{thm:unified-localisation}\ref{it:unified-interface}.
\end{proof}

\begin{corollary}[Non-localisation of depth-realising partial automorphisms]\label{cor:hidden-symmetry-primal}
Let $F$ be an asymmetric IPR fullerene on $n$ vertices with $d(F) \in \{2,3\}$, and let $\varphi$ be a partial automorphism of $F$ realising $d(F)$, that is, $\rank(\varphi) = n - d(F)$. Then:
\begin{enumerate}[label=\textup{(\roman*)}]
  \item\label{it:primal-not-swap} $\varphi$ is not the local transposition of any pair of vertices of $F$ (which already follows from $|\Delta_{uv}| \ge 4$ via \Cref{cor:basic-ineq});
  \item\label{it:primal-not-vertex-cut} $\varphi$ is not an extension of a nontrivial automorphism of a small induced subgraph $F[D']$ (on at most three vertices) that fixes the remaining vertices;
  \item\label{it:primal-not-face-localised} $\varphi$ is not face-localised: $\supp(\varphi) \not\subseteq V(F^\circ)$ for any face $F^\circ$ of $F$;
  \item\label{it:primal-not-edge-cut} $\varphi$ does not decompose as $\alpha \cup \id_R$, where $\alpha$ is a nontrivial automorphism of an induced subgraph $F[C]$ with $C \subset \dom(\varphi)$, the remainder $R = \dom(\varphi) \setminus C$ is fixed pointwise with $|R| \ge 6$, and at most $5 - d(F)$ edges of $F$ join $C$ to $R$.
\end{enumerate}
\end{corollary}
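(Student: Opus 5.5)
The plan is to fix $\varphi$ with $\rank(\varphi)=n-d(F)$ and set $k:=d(F)\in\{2,3\}$. Then $\varphi$ is a nontrivial partial automorphism of deficiency $k\le 3$, and every clause reduces to \Cref{thm:unified-localisation}, \Cref{cor:unified-decomposable}, or \Cref{cor:basic-ineq}. Nontriviality is automatic: $\varphi$ realises the maximum rank of a nontrivial partial automorphism, and since $F$ is asymmetric that rank is $n-d(F)<n$. Clauses \ref{it:primal-not-face-localised} and \ref{it:primal-not-edge-cut} are then immediate. Clause \ref{it:primal-not-face-localised} is \Cref{thm:unified-localisation}\ref{it:unified-face} verbatim. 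Clause \ref{it:primal-not-edge-cut} is \Cref{cor:unified-decomposable} with $k=d(F)$, since the interface bound $5-k$ becomes $5-d(F)$.

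For clause \ref{it:primal-not-swap} I would argue through ranks. The local transposition of $u,v$ from \Cref{lem:swap} has rank $n-|\Delta_{uv}|$. In a cubic graph of girth $5$ the sets $N(u)\setminus\{v\}$ and $N(v)\setminus\{u\}$ share at most one vertex (none if $u\sim v$). Hence $|\Delta_{uv}|\ge 2+2-0=4$ when $u\sim v$, and $|\Delta_{uv}|\ge 3+3-2=4$ otherwise. So every local transposition has rank at most $n-4<n-d(F)$ and cannot equal $\varphi$. One small point needs care: ``local transposition'' should be read as the specific map of \Cref{lem:swap}, so that the comparison of ranks suffices.

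For clause \ref{it:primal-not-vertex-cut} I would make the statement precise as follows. Suppose $\varphi$ acts as a nontrivial automorphism $\alpha$ of $F[D']$ with $|D'|\le 3$ and fixes $\dom(\varphi)\setminus D'$ pointwise. Then $\supp(\varphi)\subseteq D'$, and $\alpha$ moves some $u\in D'$ to $\alpha(u)\in D'$. I see two routes. The first is to show that $D'$, being an induced subgraph on at most three vertices of a girth-$5$ graph (a path of length $\le 2$, an edge, or isolated vertices), lies inside a single face, and then apply clause \ref{it:primal-not-face-localised}. That holds for a path of length $\le 2$ but fails for non-adjacent pairs.

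The second route, which I would use, is \Cref{lem:anchor} directly. Every neighbour of $u$ outside $D'$ is either fixed or deleted. A fixed neighbour must be a common neighbour of $u$ and $\alpha(u)$, and there is at most one such neighbour, or none if $u\sim\alpha(u)$. Counting the neighbours of the moved vertices that must therefore be deleted gives a lower bound on $|S_D|$. For instance, if $\alpha$ swaps $u,v$, this reproduces exactly the count $|\Delta_{uv}|\ge 4$ from clause \ref{it:primal-not-swap}. In general I expect the count to give $|S_D|\ge 4>k$, a contradiction.

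I expect clause \ref{it:primal-not-vertex-cut} to be the main obstacle. It is the one clause not literally covered by the theorem, and the case analysis over the automorphisms of graphs on $\le 3$ vertices must be done carefully. The case where $\alpha$ is a $3$-cycle on an independent triple is the delicate one, because each moved vertex can keep a fixed neighbour. There I would count, for each of the three moved vertices, the at least two neighbours that are not common with its image, and use girth $5$ to bound the overlaps, aiming for at least four distinct deleted vertices.
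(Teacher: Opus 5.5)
Your proposal is correct. Clauses (iii) and (iv) are handled exactly as in the paper, but you prove (i) and (ii) by a different and more elementary route.

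\textbf{The paper's route.} The paper treats (i) as the special case $D'=\{u,v\}$ of (ii) and reduces (ii) to the main theorem. It lets $M$ be the moved set and $W$ the fixed neighbours of $M$, and uses \Cref{lem:anchor} to get $|W|\le 1$. It then sets $C:=M\cup W$ and $R:=\dom(\varphi)\setminus C$, checks that at most one edge joins $C$ to $R$ and that $|R|\ge n-k-4\ge 6$, and invokes \Cref{thm:unified-localisation}\ref{it:unified-interface}. Since $|C|\le 4$ and $F$ has girth $5$, $F[C]$ is a forest. So this reduction lands in Case~3, which is the computer-assisted \Cref{lem:acyclic-case}.

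\textbf{Your route.} Your anchor count avoids the enumeration entirely. Every neighbour of a moved vertex that is not a common neighbour with its image (or preimage) is neither moved nor fixed, and hence deleted.
\begin{itemize}
\item For a swap this gives $\Delta_{uv}\subseteq S_D$. In your write-up, note that \Cref{lem:anchor} also applies to a third vertex of $D'$ fixed by $\alpha$, not only to neighbours outside $D'$. This covers the cases where $F[D']$ is a path $u\,z\,v$ or an edge plus an isolated vertex.
\item For a $3$-cycle $u\mapsto v\mapsto w\mapsto u$ on an independent triple, the count is immediate. The sets $N(u)\setminus N(v)$ and $N(v)\setminus N(w)$ are disjoint, each has at least two elements by girth $5$, and both lie in $S_D$.
\end{itemize}
Either way $|S_D|\ge 4>k$.

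\textbf{Comparison.} Your argument uses only cubicity and girth at least $5$. It therefore shows that (i) and (ii) hold for deficiency at most $3$ in every cubic graph of girth at least $5$, including the setting of \Cref{prop:girth5-witness}, where clause~\ref{it:unified-interface} of the theorem itself fails. It also makes these two parts independent of the computer verification. What the paper's route buys is uniformity: all four parts become instances of the single interface-localisation theorem, with (ii) appearing as the degenerate one-edge-interface case.
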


\begin{proof}
Such a $\varphi$ has deficiency $k = d(F) \in \{2,3\}$, so \Cref{thm:unified-localisation} applies. Part~\ref{it:primal-not-face-localised} is clause~\ref{it:unified-face}, and part~\ref{it:primal-not-edge-cut} is \Cref{cor:unified-decomposable}.

For parts~\ref{it:primal-not-swap} and~\ref{it:primal-not-vertex-cut}, suppose $\varphi$ extends a nontrivial automorphism $\alpha$ of $F[D']$ with $|D'| \le 3$ by the identity on $D \setminus D'$; a local transposition of a pair $u,v$ is the case $D' = \{u,v\}$. Let $M \subseteq D'$ be the set of vertices moved by $\alpha$ and let $W$ be the set of fixed vertices of $F$ adjacent to $M$. By \Cref{lem:anchor}, $M$ carries either a transposition $u \leftrightarrow v$, in which case $W \subseteq N(u) \cap N(v)$ and $|W| \le 1$ by girth~$5$, or a $3$-cycle, in which case the unique possible element of $W$ is adjacent to all three moved vertices. Put $C := M \cup W$ and $R := \dom(\varphi) \setminus C$. No vertex of $M$ sends an edge to $R$, because its fixed neighbours all lie in $W$ and its moved neighbours lie in $M$; and the at most one vertex of $W$ sends at least two of its three edges into $M$, hence at most one edge to $R$. Thus at most one edge joins $C$ to $R$, while $|R| \ge n - k - 4 \ge 6$, and clause~\ref{it:unified-interface} applies.
\end{proof}

\begin{remark}[Hidden, almost-global symmetry]\label{rem:hidden-interpretation}
\Cref{cor:hidden-symmetry-primal} is a list of impossibilities, but its content is positive. The automorphism group of such a cage reports nothing at all, and yet all but two or three of its vertices can be matched with one another; and by the corollary no small part of the cage accounts for that matching. The near-symmetry is a property of the cage as a whole. Which symmetry it is, in every instance we have computed, is the subject of the conjecture in \Cref{sec:concluding}.
\end{remark}

\Cref{fig:asym_depth_2} illustrates \Cref{cor:hidden-symmetry-primal} for the smallest IPR fullerene of asymmetric depth~$2$.  The largest nontrivial partial automorphism is a near-mirror partial involution: it fixes the blue vertices (defining the axis of reflection), omits the two red vertices from its domain and range, and its support spans many faces of the cage.

\begin{figure}[!h]
\centering
\begin{tikzpicture}[
    main_node/.style={circle, fill=black, draw=black, minimum size=2pt, inner sep=0pt},
    axis_node/.style={circle, fill=blue, draw=blue, minimum size=4pt, inner sep=0pt},
    highlight_node/.style={circle, fill=red, draw=red, minimum size=3pt, inner sep=0pt}
  ]

  \fill[cyan!20] (-1.467, 0.399) -- (-1.057, 0.169) -- (-1.095, -0.350) -- (-1.622, -0.449) -- (-1.842, 0.065) -- cycle;
  \fill[cyan!20] (-0.322, 0.302) -- (-0.719, 0.452) -- (-0.702, 1.022) -- (-0.313, 1.314) -- (-0.071, 0.847) -- cycle;
  \fill[cyan!20] (-1.028, 1.751) -- (-1.501, 1.724) -- (-1.768, 1.168) -- (-1.442, 0.880) -- (-1.024, 1.214) -- cycle;
  \fill[cyan!20] (-1.642, 2.101) -- (-2.147, 2.001) -- (-2.208, 2.563) -- (-1.630, 2.957) -- (-1.282, 2.619) -- cycle;
  \fill[cyan!20] (-2.795, 1.252) -- (-3.022, 1.827) -- (-3.494, 1.332) -- (-3.372, 0.390) -- (-2.943, 0.453) -- cycle;
  \fill[cyan!20] (-2.667, 3.125) -- (-1.896, 3.665) -- (-1.600, 4.202) -- (-2.672, 4.267) -- (-3.235, 3.030) -- cycle;
  \fill[cyan!20] (0.455, 1.515) -- (0.881, 1.370) -- (0.910, 0.597) -- (0.483, 0.353) -- (0.275, 0.893) -- cycle;
  \fill[cyan!20] (0.309, -0.638) -- (0.835, -0.589) -- (0.738, -1.292) -- (0.060, -1.631) -- (-0.158, -1.175) -- cycle;
  \fill[cyan!20] (-0.428, 4.215) -- (0.699, 4.272) -- (1.192, 3.155) -- (0.637, 3.190) -- (-0.136, 3.688) -- cycle;
  \fill[cyan!20] (-0.594, -2.419) -- (-1.871, -2.3) -- (-2.177, -1.613) -- (-1.309, -1.668) -- (-0.510, -1.974) -- cycle;
  \fill[cyan!20] (-2.985, -3.085) -- (-4.985, 0.914) -- (-4.328, 0.908) -- (-3.520, -0.725) -- (-2.65, -2.3) -- cycle;
  \fill[cyan!20] (1.783, 1.984) -- (2.376, 0.883) -- (1.769, -0.209) -- (1.552, 0.439) -- (1.540, 1.458) -- cycle;

  \node[main_node] (0) at (3.014, 0.914) {};
  \node[axis_node] (1) at (1.014, 4.914) {};
  \node[main_node] (2) at (2.376, 0.883) {};
  \node[main_node] (3) at (1.014, -3.085) {};
  \node[main_node] (4) at (-2.985, 4.914) {};
  \node[axis_node] (5) at (0.699, 4.272) {};
  \node[main_node] (6) at (1.783, 1.984) {};
  \node[main_node] (7) at (1.769, -0.209) {};
  \node[main_node] (8) at (0.641, -2.471) {};
  \node[axis_node] (9) at (-2.985, -3.085) {};
  \node[main_node] (10) at (-4.985, 0.914) {};
  \node[main_node] (11) at (-2.672, 4.267) {};
  \node[main_node] (12) at (-0.428, 4.215) {};
  \node[main_node] (13) at (1.192, 3.155) {};
  \node[main_node] (14) at (1.540, 1.458) {};
  \node[main_node] (15) at (1.552, 0.439) {};
  \node[main_node] (16) at (1.146, -1.403) {};
  \node[main_node] (17) at (-0.594, -2.419) {};
  \node[highlight_node] (18) at (-2.65, -2.3) {};
  \node[main_node] (19) at (-4.328, 0.908) {};
  \node[main_node] (20) at (-3.235, 3.030) {};
  \node[main_node] (21) at (-1.600, 4.202) {};
  \node[main_node] (22) at (-0.136, 3.688) {};
  \node[main_node] (23) at (0.637, 3.190) {};
  \node[main_node] (24) at (1.095, 1.858) {};
  \node[main_node] (25) at (1.153, 0.127) {};
  \node[main_node] (26) at (0.738, -1.292) {};
  \node[main_node] (27) at (-0.510, -1.974) {};
  \node[highlight_node] (28) at (-1.871, -2.3) {};
  \node[main_node] (29) at (-3.520, -0.725) {};
  \node[main_node] (30) at (-3.799, 1.753) {};
  \node[main_node] (31) at (-2.667, 3.125) {};
  \node[main_node] (32) at (-1.896, 3.665) {};
  \node[main_node] (33) at (-0.620, 3.426) {};
  \node[main_node] (34) at (0.694, 2.586) {};
  \node[main_node] (35) at (0.881, 1.370) {};
  \node[main_node] (36) at (0.910, 0.597) {};
  \node[main_node] (37) at (0.835, -0.589) {};
  \node[main_node] (38) at (0.060, -1.631) {};
  \node[main_node] (39) at (-1.309, -1.668) {};
  \node[axis_node] (40) at (-2.177, -1.613) {};
  \node[main_node] (41) at (-3.228, -0.538) {};
  \node[main_node] (42) at (-3.494, 1.332) {};
  \node[main_node] (43) at (-2.686, 2.545) {};
  \node[main_node] (44) at (-1.378, 3.431) {};
  \node[main_node] (45) at (-0.370, 2.934) {};
  \node[main_node] (46) at (0.188, 2.578) {};
  \node[main_node] (47) at (0.455, 1.515) {};
  \node[main_node] (48) at (0.483, 0.353) {};
  \node[main_node] (49) at (0.309, -0.638) {};
  \node[main_node] (50) at (-0.158, -1.175) {};
  \node[main_node] (51) at (-1.291, -1.239) {};
  \node[main_node] (52) at (-2.623, -1.123) {};
  \node[main_node] (53) at (-3.372, 0.390) {};
  \node[main_node] (54) at (-3.022, 1.827) {};
  \node[main_node] (55) at (-2.208, 2.563) {};
  \node[main_node] (56) at (-1.630, 2.957) {};
  \node[main_node] (57) at (-0.752, 2.603) {};
  \node[main_node] (58) at (0.162, 2.027) {};
  \node[main_node] (59) at (0.275, 0.893) {};
  \node[main_node] (60) at (0.199, -0.189) {};
  \node[main_node] (61) at (-0.713, -1.056) {};
  \node[axis_node] (62) at (-1.810, -0.857) {};
  \node[main_node] (63) at (-2.393, -0.706) {};
  \node[main_node] (64) at (-2.943, 0.453) {};
  \node[main_node] (65) at (-2.795, 1.252) {};
  \node[main_node] (66) at (-2.147, 2.001) {};
  \node[main_node] (67) at (-1.282, 2.619) {};
  \node[main_node] (68) at (-0.655, 2.110) {};
  \node[main_node] (69) at (-0.250, 1.864) {};
  \node[main_node] (70) at (-0.071, 0.847) {};
  \node[main_node] (71) at (-0.254, -0.202) {};
  \node[main_node] (72) at (-0.684, -0.579) {};
  \node[axis_node] (73) at (-1.622, -0.449) {};
  \node[main_node] (74) at (-2.569, -0.013) {};
  \node[main_node] (75) at (-2.405, 1.408) {};
  \node[main_node] (76) at (-1.642, 2.101) {};
  \node[main_node] (77) at (-1.028, 1.751) {};
  \node[main_node] (78) at (-0.313, 1.314) {};
  \node[main_node] (79) at (-0.322, 0.302) {};
  \node[main_node] (80) at (-1.095, -0.350) {};
  \node[main_node] (81) at (-1.842, 0.065) {};
  \node[main_node] (82) at (-2.235, 0.319) {};
  \node[main_node] (83) at (-2.179, 0.955) {};
  \node[main_node] (84) at (-1.501, 1.724) {};
  \node[main_node] (85) at (-1.024, 1.214) {};
  \node[main_node] (86) at (-0.702, 1.022) {};
  \node[main_node] (87) at (-0.719, 0.452) {};
  \node[main_node] (88) at (-1.057, 0.169) {};
  \node[main_node] (89) at (-1.467, 0.399) {};
  \node[main_node] (90) at (-1.768, 1.168) {};
  \node[main_node] (91) at (-1.442, 0.880) {};

  \path[draw, thick]
  (0) edge (1) (0) edge (2) (0) edge (3)
  (1) edge (4) (1) edge (5)
  (2) edge (6) (2) edge (7)
  (3) edge (8) (3) edge (9)
  (4) edge (10) (4) edge (11)
  (5) edge (12) (5) edge (13)
  (6) edge (13) (6) edge (14)
  (7) edge (15) (7) edge (16)
  (8) edge (16) (8) edge (17)
  (9) edge (10)
  (9) edge[red] (18) 
  (10) edge (19)
  (11) edge (20) (11) edge (21)
  (12) edge (21) (12) edge (22)
  (13) edge (23)
  (14) edge (15) (14) edge (24)
  (15) edge (25)
  (16) edge (26)
  (17) edge (27)
  (17) edge[red] (28) 
  (18) edge[red] (28) 
  (18) edge[red] (29) 
  (19) edge (29) 
  (19) edge (30)
  (20) edge (30) (20) edge (31)
  (21) edge (32)
  (22) edge (23) (22) edge (33)
  (23) edge (34)
  (24) edge (34) (24) edge (35)
  (25) edge (36) (25) edge (37)
  (26) edge (37) (26) edge (38)
  (27) edge (38) (27) edge (39)
  (28) edge[red] (40) 
  (29) edge (41)
  (30) edge (42)
  (31) edge (32) (31) edge (43)
  (32) edge (44)
  (33) edge (44) (33) edge (45)
  (34) edge (46)
  (35) edge (36) (35) edge (47)
  (36) edge (48)
  (37) edge (49)
  (38) edge (50)
  (39) edge (40) (39) edge (51)
  (40) edge (52)
  (41) edge (52) (41) edge (53)
  (42) edge (53) (42) edge (54)
  (43) edge (54) (43) edge (55)
  (44) edge (56)
  (45) edge (46) (45) edge (57)
  (46) edge (58)
  (47) edge (58) (47) edge (59)
  (48) edge (59) (48) edge (60)
  (49) edge (50) (49) edge (60)
  (50) edge (61)
  (51) edge (61) (51) edge (62)
  (52) edge (63)
  (53) edge (64)
  (54) edge (65)
  (55) edge (56) (55) edge (66)
  (56) edge (67)
  (57) edge (67) (57) edge (68)
  (58) edge (69)
  (59) edge (70)
  (60) edge (71)
  (61) edge (72)
  (62) edge (63) (62) edge (73)
  (63) edge (74)
  (64) edge (65) (64) edge (74)
  (65) edge (75)
  (66) edge (75) (66) edge (76)
  (67) edge (76)
  (68) edge (69) (68) edge (77)
  (69) edge (78)
  (70) edge (78) (70) edge (79)
  (71) edge (72) (71) edge (79)
  (72) edge (80)
  (73) edge (80) (73) edge (81)
  (74) edge (82)
  (75) edge (83)
  (76) edge (84)
  (77) edge (84) (77) edge (85)
  (78) edge (86)
  (79) edge (87)
  (80) edge (88)
  (81) edge (82) (81) edge (89)
  (82) edge (83)
  (83) edge (90)
  (84) edge (90)
  (85) edge (86) (85) edge (91)
  (86) edge (87)
  (87) edge (88)
  (88) edge (89)
  (89) edge (91)
  (90) edge (91);

  \draw[dashed, gray!50, thin] (1) -- (5) -- (73) -- (62) -- (40) -- (9);

\end{tikzpicture}
\caption{The smallest IPR fullerene of asymmetric depth $2$, on $92$ vertices, illustrating \Cref{cor:hidden-symmetry-primal}. The partial automorphism that realises $d(F) = 2$ captures the mirror symmetry that fixes the blue vertices and omits the two red vertices from its domain and range.}
\label{fig:asym_depth_2}
\end{figure}

As can be seen in Table~\ref{table:asym_ipr}, all of these remaining values of asymmetric depth are obtained by some fullerenes. 

In view of \Cref{cor:hidden-symmetry-primal} and the data in \Cref{table:asym_ipr}, most IPR fullerenes attain the maximum asymmetric depth $4$ via a partial automorphism that is essentially a trivial transposition.
By $n=118$, the depth-$4$ column accounts for $\nicefrac{7546}{7670} \approx 98.4\%$ of asymmetric IPR fullerenes. In contrast, IPR fullerenes of asymmetric depth $2$ or $3$ exhibit hidden, almost-global symmetries, as illustrated by the example in \Cref{fig:asym_depth_2}. Inspecting our computational data, we observe that similar hidden symmetries occur for every depth-$2$ or depth-$3$ IPR fullerene up to $n = 118$. The smallest such fullerene of depth $2$ (resp.\ $3$) has $92$ (resp.\ $84$) vertices.

\begin{remark}
An analogous analysis can be carried out for the duals of IPR fullerenes. Call a separating cycle in a planar triangulation \emph{trivial} if its vertex set is the open neighbourhood $N(p)$ of a single vertex $p$. Under planar duality, cyclic $k$-edge cuts of $F$ correspond to separating $k$-cycles of $\dual(F)$, and the trivial cyclic cuts --- those isolating a single face of $F$ --- correspond exactly to the trivial separating cycles. Since fullerene graphs are cyclically $5$-edge-connected, $\dual(F)$ contains no separating $3$- or $4$-cycles, and by the Kardo\v{s}--\v{S}krekovski result recalled at the start of \S\ref{subsec:localised}, in the dual of an IPR fullerene every separating $5$- or $6$-cycle is trivial. This high cyclic connectivity of the dual allows one to obtain a dual analogue of \Cref{cor:hidden-symmetry-primal}, ruling out localised partial automorphisms of small asymmetric depth in $\dual(F)$.
\end{remark}

\begin{table}
\centering
\small
\setlength{\tabcolsep}{6pt}
\renewcommand{\arraystretch}{1.1}
\begin{tabular}{@{}r r r r r r r r@{}}
  \toprule
  nv & Total IPR & Asymmetric IPR & \% Asymmetric & non-asymmetric & depth 2 & depth 3 & depth 4 \\
  \midrule
  60  & 1       & 0       & 0.0  & 1   & 0  & 0  & 0   \\
  62  & 0       & 0       & 0.0  & 0   & 0  & 0  & 0   \\
  64  & 0       & 0       & 0.0  & 0   & 0  & 0  & 0   \\
  66  & 0       & 0       & 0.0  & 0   & 0  & 0  & 0   \\
  68  & 0       & 0       & 0.0  & 0   & 0  & 0  & 0   \\
  70  & 1       & 0       & 0.0  & 1   & 0  & 0  & 0   \\
  72  & 1       & 0       & 0.0  & 1   & 0  & 0  & 0   \\
  74  & 1       & 0       & 0.0  & 1   & 0  & 0  & 0   \\
  76  & 2       & 0       & 0.0  & 2   & 0  & 0  & 0   \\
  78  & 5       & 0       & 0.0  & 5   & 0  & 0  & 0   \\
  80  & 7       & 0       & 0.0  & 7   & 0  & 0  & 0   \\
  82  & 9       & 0       & 0.0  & 9   & 0  & 0  & 0   \\
  84  & 24      & 1       & 4.2  & 23  & 0  & 1  & 0   \\
  86  & 19      & 6       & 31.6 & 13  & 0  & 2  & 4   \\
  88  & 35      & 11      & 31.4 & 24  & 0  & 2  & 9   \\
  90  & 46      & 16      & 34.8 & 30  & 0  & 4  & 12  \\
  92  & 86      & 38      & 44.2 & 48  & 3  & 1  & 34  \\
  94  & 134     & 89      & 66.4 & 45  & 4  & 6  & 79  \\
  96  & 187     & 108     & 57.8 & 79  & 2  & 11 & 95  \\
  98  & 259     & 169     & 65.3 & 90  & 3  & 12 & 154 \\
  100 & 450     & 336     & 74.7 & 114 & 2  & 24 & 310 \\
  102 & 616     & 488     & 79.2 & 128 & 10 & 20 & 458 \\
  104 & 823     & 644     & 78.3 & 179 & 6  & 26 & 612 \\
  106 & 1233    & 1054    & 85.5 & 179 & 10 & 38 & 1006 \\
  108 & 1799    & 1479    & 82.2 & 320 & 8  & 52 & 1419 \\
  110 & 2355    & 2111    & 89.6 & 244 & 16 & 54 & 2041 \\
  112 & 3342    & 2950    & 88.3 & 392 & 27 & 44 & 2879 \\
  114 & 4468    & 4089    & 91.5 & 379 & 15 & 88 & 3986 \\
  116 & 6063    & 5508    & 90.8 & 555 & 27 & 85 & 5396 \\
  118 & 8148    & 7670    & 94.1 & 478 & 14 & 110 & 7546 \\
  \bottomrule
\end{tabular}
\caption{Number of IPR and asymmetric IPR fullerenes and their ratio. The \emph{non-asymmetric} column is exactly the asymmetric depth-$0$ (symmetric) count; asymmetric depth~$1$ never occurs (\Cref{thm:fullerene-depth-gt-1}) and is therefore omitted; the columns \emph{depth 2}, \emph{depth 3}, \emph{depth 4} give the number of asymmetric IPR fullerenes attaining each value, so that non-asymmetric $+$ depth $2 +$ depth $3 +$ depth $4$ equals the total IPR count.}
\label{table:asym_ipr}
\end{table}

\section{Graphs of higher genus: upper bound}

The bound of this section is an easy consequence of the degeneracy of graphs embedded on a surface, and is included for completeness; unlike the planar bound of \Cref{t:planar} it is almost certainly far from tight, and already at $g = 1$ it yields only $d(F) \le 12$.

Below we generalise the planar bound of \Cref{t:planar} via the notion of degeneracy. In contrast to the tight planar bound, we do not know it to be tight (\Cref{q:genus-tight}).

Recall that an orientable surface of \emph{genus} $g$ is the sphere with $g$ handles attached, and that the \emph{genus} $g(F)$ of a graph $F$ is the smallest $g$ for which $F$ embeds on the orientable surface of genus $g$ so that edges meet only at their endpoints. Thus $g(F) = 0$ exactly for planar graphs, and larger $g(F)$ measures how far $F$ is from being planar. Throughout this section $g$ denotes the genus of $F$.

A graph $F$ has \emph{degeneracy} at most $k$, $\degen(F) \le k$, if \emph{every} induced subgraph
contains a vertex of degree $\le k$. The following is a classical algorithm~\cite{matula1983smallest} used to determine the degeneracy of a graph.

\begin{algorithm}[h]
\caption{Peeling algorithm for degeneracy ordering.}\label{alg:peeling}
\begin{algorithmic}[1]
\While{the current subgraph has vertices}
  \State find a vertex of minimum degree (guaranteed $\le k$) in the current subgraph;
  \State remove it and place it next in the ordering.
\EndWhile
\end{algorithmic}
\end{algorithm}

\noindent The result is an ordering $v_1,\dots,v_n$ such that each $v_i$ has at most $k$ neighbours among $v_{i+1},\dots,v_n$.

If a graph $F$ has degeneracy $k$, then in the ordering produced by Algorithm~\ref{alg:peeling}, $v_1$ has all its neighbours later in the ordering, so $|N(v_1)| \le k$. The vertex $v_2$ has at most $k$ neighbours later in the ordering, plus possibly $v_1$; in particular $|N(v_2) \setminus \{v_1\}| \le k$. Recalling that $\Delta_{v_1 v_2} = (N(v_1) \setminus \{v_2\}) \mathbin{\triangle} (N(v_2) \setminus \{v_1\})$, we obtain
\[
|\Delta_{v_1 v_2}|
  \le |N(v_1) \setminus \{v_2\}| + |N(v_2) \setminus \{v_1\}|
  \le k + k = 2k.
\]

Therefore, if a graph has degeneracy at most $k$, then it has symmetric difference at most $2k$.

\begin{equation}\label{ineq:degen}
\min_{1 \leq i < j \leq n} \{ |\Delta_{ij}| \}\le 2\,\degen(F)
\end{equation}

In what follows, we improve the bound for graphs of a given genus. Recall the Heawood number, which bounds the number of colors sufficient for map coloring on a surface of genus $g$ for $g>0$.

\begin{equation}\label{heawood}
H(g)=\left\lfloor \dfrac{7+\sqrt{1+48g}}{2}\right\rfloor    
\end{equation}

Moreover, recall that degeneracy is bounded by Heawood number.

\begin{equation}\label{heawood:degeneracy}
     \degen(F)\le H(g)-1
\end{equation}

 \begin{theorem}\label{thm:genus-bound}
Let $F$ be a graph of genus $g$, $g>0$. Then
\[
  d(F) \;\le\; 2H(g)-2 \;=\; 2\!\left\lfloor \frac{7+\sqrt{1+48g}}{2}\right\rfloor - 2.
\]
In particular, $d(F) \le \left\lfloor 5+\sqrt{1+48g}\right\rfloor$ as a weaker but explicit closed-form bound.
 \end{theorem}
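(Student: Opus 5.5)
The proof chains three facts already stated in this section. By \Cref{cor:basic-ineq}, $d(F) \le \min_{u \ne v} |\Delta_{uv}|$. By \eqref{ineq:degen}, this minimum is at most $2\,\degen(F)$. By \eqref{heawood:degeneracy}, $\degen(F) \le H(g)-1$. Together these give
\[
  d(F) \;\le\; 2\,\degen(F) \;\le\; 2H(g)-2 .
\]

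Some bookkeeping is needed for degenerate cases. Asymmetric depth is defined only for order $n \ge 2$, so we assume this. The peeling argument behind \eqref{ineq:degen} uses the first two vertices $v_1, v_2$ of a degeneracy ordering, which exist once $n \ge 2$. It does not need connectivity, so \Cref{t:disconnected_general} is not required, although it could be used.

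The main step that needs care is \eqref{heawood:degeneracy}. I would justify it as follows. Every subgraph of a graph of genus $g$ again embeds on the same surface. Euler's formula gives $|E| \le 3|V| + 6(g-1)$ for $|V| \ge 3$. Hence a subgraph on $m$ vertices has minimum degree at most $6 + 12(g-1)/m$. The classical Heawood computation then shows that the minimum degree is at most $H(g)-1$: when $m \le H(g)$ this is trivial, since a vertex has degree at most $m-1$, and otherwise the Euler bound gives it. The paper treats this step as a recalled fact, so I would cite it rather than reprove it.

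For the closed form, write $s = \sqrt{1+48g}$. Then
\[
  2\left\lfloor \tfrac{7+s}{2}\right\rfloor \;\le\; \lfloor 7+s \rfloor ,
\]
because $2\lfloor x\rfloor \le \lfloor 2x\rfloor$. Subtracting $2$ from both sides gives $d(F) \le \lfloor 5+s \rfloor$.
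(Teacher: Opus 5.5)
Your proposal is correct and follows the paper's proof: the chain $d(F)\le \min_{u\ne v}|\Delta_{uv}|\le 2\degen(F)\le 2H(g)-2$, then an integrality step for the closed form. Your $2\lfloor x\rfloor\le\lfloor 2x\rfloor$ is equivalent to the paper's remark that the integer $2H(g)-2$ is at most $5+\sqrt{1+48g}$, and your sketch of the Heawood degeneracy bound and the $n\ge 2$ bookkeeping (automatic here, since a graph of positive genus has at least five vertices) are sound additions to what the paper merely cites.
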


\begin{proof}
By~\eqref{basic-ineq} and~\eqref{ineq:degen},
\(d(F) \le 2\,\degen(F)\).
By~\eqref{heawood:degeneracy}, $\degen(F)\le H(g)-1$, so
\[
  d(F) \le 2(H(g)-1) = 2H(g)-2.
\]
Since $H(g)=\lfloor(7+\sqrt{1+48g})/2\rfloor \le (7+\sqrt{1+48g})/2$ and $d(F)$ is an integer, we obtain
$2H(g)-2 \le \lfloor 5+\sqrt{1+48g}\rfloor$ as a weaker but explicit closed-form bound.
\end{proof}

\begin{remark}
While \Cref{thm:genus-bound} establishes an upper bound dependent on the genus $g$, this bound is tightest for small graphs that heavily wrap around the surface. For a fixed genus $g$, as the number of vertices $n$ grows arbitrarily large, the structure behaves locally like a planar graph. By the generalised Euler formula, a graph of genus $g$ has at most $3n - 6 + 6g$ edges. Thus, its average degree is bounded by $6 + \frac{12g - 12}{n}$. As $n \to \infty$, the average degree approaches $6$, so discharging arguments analogous to those used in the planar case \cite{aksionovDeeplyAsymmetricPlanar2005} again guarantee vertices of small degree, allowing local transpositions that restrict asymmetric depth. In other words, to attain higher asymmetric depth one is typically forced to higher genus.
\end{remark}

\section{Concluding remarks}\label{sec:concluding}

The data of \Cref{table:asym_ipr}, together with \Cref{fig:asym_depth_2}, suggest that the non-localisation of \Cref{cor:hidden-symmetry-primal} has a concrete geometric cause: in every low-depth cage we have computed, the partial automorphism realising the depth is a reflection of the cage, broken at two or three atoms. To state this precisely we make the word \emph{mirror} exact.

Recall that a $3$-connected planar graph has an essentially unique embedding in the sphere, so $F$ carries a canonical rotation system, determined up to a simultaneous reversal at every vertex. Let $\varphi$ be a partial automorphism of $F$ with $D := \dom(\varphi)$. Call a vertex $x \in D$ \emph{interior} for $\varphi$ if $N(x) \subseteq D$. For such an $x$ the map $\varphi$ carries $N(x)$ bijectively onto $N(\varphi(x))$, and since both are triples the induced map either preserves or reverses the cyclic order supplied by the rotation system; reversing all rotations simultaneously does not change which of the two occurs, so the distinction does not depend on the choice of embedding. A partial automorphism of deficiency $k$ has at least $n - 4k$ interior vertices, so for the cages considered here the following notion is never vacuous.

\begin{definition}[Near-mirror partial automorphism]\label{def:near-mirror}
A partial automorphism $\varphi$ of a fullerene $F$ is a \emph{near-mirror} if it is self-inverse, in the sense that $\varphi(\varphi(x)) = x$ whenever $x$ and $\varphi(x)$ both lie in $\dom(\varphi)$, and if it reverses the cyclic order of neighbours at every interior vertex.
\end{definition}

\begin{conjecture}\label{conj:mirror}
Let $F$ be an \emph{asymmetric} IPR fullerene with $d(F) \in \{2,3\}$. Then $d(F)$ is realised by a near-mirror partial automorphism.
\end{conjecture}

We have verified \Cref{conj:mirror} for all $727$ asymmetric IPR fullerenes of asymmetric depth $2$ or $3$ on at most $118$ vertices ($147$ of depth $2$ and $580$ of depth $3$; see \Cref{table:asym_ipr}). The hypothesis that $F$ be asymmetric cannot be dropped: a symmetric IPR fullerene has $d(F) = 0$ and still admits near-mirror partial automorphisms of positive deficiency.

We tried starting from some of these IPR fullerenes of asymmetric depth $2$ and $3$ and applying simple patch replacement operations described in~\cite{andovaMathematicalAspectsFullerenes2016} to recover the underlying global mirror symmetries, but we did not succeed. Do there exist fullerene patch replacement operations that allow one to recover the underlying global symmetry?

Moreover, the relationship between the asymmetric depth of a fullerene and that of its dual is not yet clear. There are examples where the two depths are equal and examples where the dual depth is greater or lesser than the depth of the fullerene.  Across all fullerenes of order at most~$118$ enumerated by \texttt{buckygen}, our computations show $|d(F) - d(F^*)| \le 2$, motivating the following. We conjecture that for every fullerene graph $F$, $|d(F) - d(F^*)| \le 2$.

It is unclear whether the bound for graphs of higher genus is tight. We have a construction of asymmetric graphs using columns of hypercubes described in \cite{cingelPartialAutomorphismsLevel2024} that yields a graph of asymmetric depth $d$ for any $d \ge 0$, with increasing genus but the genus grows too fast to match the bound. We record this as an open problem.

\begin{question}\label{q:genus-tight}
Is the bound of \Cref{thm:genus-bound} tight? That is, are there infinitely many $g$ for which some graph of genus~$g$ attains $d(F) = 2H(g) - 2$, or even $d(F) = \Theta(\sqrt{g})$?
\end{question}

\section{Computational methodology}\label{sec:computational-methodology}

\paragraph{Generation of fullerenes.}
All fullerene graphs used in our computational experiments were generated using \texttt{buckygen} \cite{buckygen}. In particular, for each even $n$ in the tested range, we generated the complete list of nonisomorphic IPR fullerenes on $n$ vertices.

\paragraph{Computing asymmetry and asymmetric depth.}
Automorphism and isomorphism computations were carried out with the \texttt{nauty}/\texttt{Traces} suite, called from \textsc{Julia} version~$1.12$. For each fullerene $F$ (and its dual $F^*$), we invoked \texttt{nauty}~2.9.1 to test whether the automorphism group is trivial. To determine the asymmetric depth, we searched for large-rank nontrivial partial automorphisms by identifying isomorphisms between large induced subgraphs---in particular, between vertex-deleted subgraphs---via canonical hashes. The largest rank $k$ at which a nontrivial partial automorphism is found determines $d(F)=|V(F)|-k$.

\textbf{Computational resources.} 
The main workload to compute asymmetric depth of fullerenes up to 118 vertices ran on high-performance computing (HPC) cluster with compute nodes equipped with 2 × AMD EPYC 9745 (each with 256 cores / 512 threads per node, operating at 2.4 GHz (3.7 GHz Turbo)) CPUs and 1,536 GB DDR5 RAM. In total, computations took roughly 4 CPU months to complete.

\textbf{Correctness.} Since some results in this paper rely on the outcomes of algorithms, it is important to take extra measures to ensure that the algorithms were implemented correctly. We follow the guidelines in \cite{jookenComputerassistedGraphTheory2025}. Because our approach checks, by brute force, all partial automorphisms of the target ranks, correctness, and completeness follow directly. Nevertheless, since our implementation relies on external isomorphism software, we also implemented independent versions using different packages for computing isomorphisms (\texttt{nauty} and \texttt{NetworkX} VF2) in two different languages (\textsc{Julia} and \textsc{Python}). The results were always in agreement. Moreover, we used unit tests on small graphs that we also verified by hand.

\paragraph{The exhaustive search of \Cref{lem:acyclic-case}.}
The finite verification underlying \Cref{lem:acyclic-case} is implemented in \textsc{Python}~3 using \texttt{NetworkX} for graph isomorphism (VF2) and non-isomorphic tree generation. The program enumerates certificates exactly as described in \S\ref{subsec:unified} and applies the conditions~\ref{A1}--\ref{A5} as independent filters; the totals quoted in the proof are produced by the run
\texttt{search\_exit2.py $k$ $c_{\max}$ $m_{\min}$ $m_{\max}$} for $(k, c_{\max}) \in \{(1,4), (2,3), (3,2)\}$ and $m$ ranging over $1 \le m \le 3 + 2k$. The whole verification takes under two minutes on a single core.

\textbf{Correctness.} As in the enumeration of asymmetric depth, we follow the guidelines of~\cite{jookenComputerassistedGraphTheory2025}. The search is a complete enumeration of a finite space, so correctness reduces to the faithfulness of the filters and the exhaustiveness of the enumeration. Three independent checks were carried out. First, the case $\dom(\varphi) = \ran(\varphi)$ was implemented twice, once via automorphisms of the induced subgraph $F[C]$ and once directly from the definition of a partial automorphism over all labelled sets $C$ with $|C| \le 6$; the two implementations agree on the number of records reaching the face conditions and on their isomorphism classes. Second, the completeness of the forest enumeration rests on an independent trusted generator: the subcubic trees from which the forests $F[C]$ are assembled were generated with \texttt{gentreeg} (part of the \texttt{nauty} suite), which agrees with our own enumeration in every isomorphism class up to nine vertices, and re-running the entire search on the \texttt{gentreeg}-generated trees reproduces the outcome exactly; as a further check the tree counts match the orbit identity $\sum_{T} |C|!\,/\,|\Aut(T)|$. Third, the cycle enumerator and the face conditions were unit-tested on graphs verified by hand; for instance, the Petersen graph is correctly reported to have twelve $5$-cycles and ten $6$-cycles, and a pair of pentagons sharing an edge is correctly rejected. An earlier version of the search that pruned the labelling of $S_D$ incorrectly was detected by the first of these checks.

\paragraph{The graph of \Cref{prop:girth5-witness}.}
The witness graph is given explicitly, together with a drawing, in \Cref{app:witness}. We verified with \texttt{NetworkX} that it is cubic, $3$-connected, non-planar, of girth~$5$ and with trivial automorphism group, and that the stated map is a partial automorphism of rank~$17$.

\paragraph{Data.}
All explicitly mentioned or shown graphs are also available in the House of Graphs database \cite{coolsaet2023house} by searching for the term ``asymmetric depth''. The source code for the asymmetric-depth computation, the finite enumeration underlying \Cref{lem:acyclic-case} (\texttt{search\_exit2.py}), and the verification of the graph of \Cref{prop:girth5-witness} are deposited in a public repository.
\footnote{Archived at \url{https://github.com/JanPastorek/asym_depth_fullerenes}.}

\appendix

\section{The witness graph of \Cref{prop:girth5-witness}}\label{app:witness}

On the vertex set $\{x,u,w,v,y\} \cup \{\rho\} \cup \{s_1,s_2,s_3\} \cup \{e_0, \dots, e_{10}\}$, the thirty edges
\[
\begin{array}{l}
xu,\; uw,\; wv,\; vy,\; \rho x,\; \rho y,\; s_1u,\; s_1y,\; s_2v,\; s_2x,\; s_3w, \\[2pt]
e_0e_3,\; e_0e_4,\; e_0s_3,\; e_1e_4,\; e_1e_7,\; e_1s_2,\; e_2e_3,\; e_2e_7,\; e_2e_9,\; e_3\rho, \\[2pt]
e_4e_5,\; e_5e_6,\; e_5e_9,\; e_6e_{10},\; e_6s_1,\; e_7e_{10},\; e_8e_9,\; e_8e_{10},\; e_8s_3
\end{array}
\]
define a cubic graph $G$ on $20$ vertices, drawn in \Cref{fig:witness-graph}. It is $3$-connected, non-planar, has girth~$5$ and trivial automorphism group. The map
\[
  \varphi \;=\; (x\,y)(u\,v) \;\cup\; \id_{V(G) \setminus \{x,u,v,y,s_1,s_2,s_3\}}
\]
is a partial automorphism of $G$ of rank~$17$, hence of deficiency $k = 3$. Its support is $\{x,u,v,y\}$, it is contained in $C = \{x,u,w,v,y\}$, and the only edges joining $C$ to the pointwise-fixed remainder $R = V(G) \setminus (C \cup \{s_1,s_2,s_3\})$, which has twelve vertices, are $\rho x$ and $\rho y$. Thus $\varphi$ is localised behind a two-edge interface, which is exactly what \Cref{thm:unified-localisation}\ref{it:unified-interface} forbids in an IPR fullerene; $G$ shows that planarity, and not merely cubicity and girth~$5$, is doing the work there.

\begin{figure}[!ht]
\centering
\begin{tikzpicture}[scale=0.78,
  every node/.style={font=\small},
  mv/.style ={circle, draw=black, fill=black, text=white, inner sep=1.2pt, minimum size=6.2mm},
  fx/.style ={circle, draw=black, fill=white, inner sep=1.2pt, minimum size=6.2mm},
  del/.style={rectangle, draw=black, fill=gray!35, inner sep=2.4pt, minimum size=6.2mm},
  ed/.style    ={draw=black!55, line width=0.6pt},
  cedge/.style ={draw=black, line width=1.5pt},
  iface/.style ={draw=black, line width=1.2pt, dashed}]

  \draw[ed] (-3.53,2.83)--(-5.72,0.89) (-3.53,2.83)--(-4.53,-0.79)
            (-1.20,-2.58)--(-1.93,-0.80) (-1.20,-2.58)--(-5.74,-3.03)
            (-0.05,2.42)--(-2.01,1.34)
            (0.12,-0.74)--(0.75,-3.67) (0.12,-0.74)--(1.96,-0.05) (0.12,-0.74)--(-0.05,2.42)
            (2.03,-2.00)--(1.96,-0.05) (2.03,-2.00)--(3.78,-0.72) (2.03,-2.00)--(-1.20,-2.58)
            (5.30,-2.45)--(0.75,-3.67) (5.30,-2.45)--(3.78,-0.72) (5.30,-2.45)--(5.02,1.15)
            (0.75,-3.67)--(-3.27,-4.71)
            (1.96,-0.05)--(2.41,1.88) (2.41,1.88)--(0.77,4.76) (2.41,1.88)--(5.02,1.15)
            (0.77,4.76)--(3.75,5.55) (0.77,4.76)--(-3.53,2.83)
            (3.78,-0.72)--(3.75,5.55)
            (5.13,3.90)--(5.02,1.15) (5.13,3.90)--(3.75,5.55) (5.13,3.90)--(-0.05,2.42);
  \draw[cedge] (-5.74,-3.03)--(-5.72,0.89) (-5.72,0.89)--(-2.01,1.34)
               (-2.01,1.34)--(-1.93,-0.80) (-1.93,-0.80)--(-4.53,-0.79);
  \draw[iface] (-3.27,-4.71)--(-5.74,-3.03) (-3.27,-4.71)--(-4.53,-0.79);

  \node[mv]  (x)   at (-5.74,-3.03) {$x$};
  \node[mv]  (u)   at (-5.72,0.89)  {$u$};
  \node[fx]  (w)   at (-2.01,1.34)  {$w$};
  \node[mv]  (v)   at (-1.93,-0.80) {$v$};
  \node[mv]  (y)   at (-4.53,-0.79) {$y$};
  \node[fx]  (rh)  at (-3.27,-4.71) {$\rho$};
  \node[del] (s1)  at (-3.53,2.83)  {$s_1$};
  \node[del] (s2)  at (-1.20,-2.58) {$s_2$};
  \node[del] (s3)  at (-0.05,2.42)  {$s_3$};
  \node[fx]  (e0)  at (0.12,-0.74)  {$e_{0}$};
  \node[fx]  (e1)  at (2.03,-2.00)  {$e_{1}$};
  \node[fx]  (e2)  at (5.30,-2.45)  {$e_{2}$};
  \node[fx]  (e3)  at (0.75,-3.67)  {$e_{3}$};
  \node[fx]  (e4)  at (1.96,-0.05)  {$e_{4}$};
  \node[fx]  (e5)  at (2.41,1.88)   {$e_{5}$};
  \node[fx]  (e6)  at (0.77,4.76)   {$e_{6}$};
  \node[fx]  (e7)  at (3.78,-0.72)  {$e_{7}$};
  \node[fx]  (e8)  at (5.13,3.90)   {$e_{8}$};
  \node[fx]  (e9)  at (5.02,1.15)   {$e_{9}$};
  \node[fx]  (e10) at (3.75,5.55)   {$e_{10}$};

  \begin{scope}[shift={(-5.9,6.3)}, every node/.style={font=\scriptsize}]
    \node[mv,  minimum size=4.6mm, inner sep=0.6pt] at (0,0) {};
    \node[anchor=west] at (0.35,0) {moved by $\varphi$};
    \node[fx,  minimum size=4.6mm, inner sep=0.6pt] at (3.6,0) {};
    \node[anchor=west] at (3.95,0) {fixed by $\varphi$};
    \node[del, minimum size=4.6mm, inner sep=1.2pt] at (7.0,0) {};
    \node[anchor=west] at (7.4,0) {deleted ($S_D$)};
  \end{scope}
\end{tikzpicture}
\caption{The graph $G$ of \Cref{prop:girth5-witness}: cubic, $3$-connected, of girth~$5$, non-planar, and asymmetric. Thick edges form the induced path $C = x\,u\,w\,v\,y$ carrying the reversal $(x\,y)(u\,v)$; the two dashed edges $\rho x$ and $\rho y$ are the entire interface between $C$ and the twelve-vertex pointwise-fixed remainder $R$. The three squares are the deleted vertices $s_1, s_2, s_3$. Crossings in the drawing are artefacts of the layout, $G$ being non-planar.}
\label{fig:witness-graph}
\end{figure}

\section*{Acknowledgements}

The author gratefully acknowledges valuable discussions at Ko\v{s}ický kombinatorický seminár and subsequent discussions with Roman Soták, Tomá\v{s} Madaras and Franti\v{s}ek Kardo\v{s}.

This work was supported by the use of computational resources of the supercomputer PERUN, operated by the Supercomputing Centre at the Technical University of Košice (TUKE), Slovakia with the support of the European Union from the funds of the Recovery and Resilience Plan of the Slovak Republic within the framework of project No. 17I03-04-P03-00001, Development and design of a supercomputer for the National Supercomputing Center.

The author is supported by \textit{Agent\'{u}ra na podporu v\'{y}skumu a v\'{y}voja} (APVV grant SK-AT-23-0019), by \textit{Vedeck\'{a} grantov\'{a} agent\'{u}ra} (VEGA grant 1/0437/23) and by \textit{Comenius University} (grant UK/1020/2026).

\section*{Declaration on the use of AI tools}

During the preparation of this work the author used generative artificial-intelligence assistants (large language models) for two purposes: to improve the presentation of the manuscript---grammar, wording, and stylistic consistency---and as an additional reviewer of the content, checking statements, cross-references, and the internal consistency of the reported figures, and suggesting revisions. Every suggestion produced by these tools was reviewed and verified by the author, who takes full responsibility for the content of this publication.

\printbibliography

\end{document}